\documentclass[11pt]{article}
\usepackage[numbers,square]{natbib}
\usepackage{amsmath}
\usepackage{amsthm}
\usepackage{amsfonts}
\usepackage{amssymb}
\usepackage{siunitx}
\usepackage{commath}
\usepackage{graphicx}
\usepackage{xspace}
\usepackage{color}
\usepackage[lofdepth,lotdepth]{subfig}
\usepackage{stmaryrd}
\usepackage{url}
\usepackage{amsthm}
\usepackage{graphbox}
\usepackage{footnote}
\makesavenoteenv{tabular}
\makesavenoteenv{table}
\usepackage[hidelinks]{hyperref}
\hypersetup{colorlinks = true, urlcolor = blue,
  linkcolor = blue, citecolor = blue}
\usepackage{cleveref}

\usepackage[margin=0.7in]{geometry}
\makeatletter
\newcommand{\tnorm}{\@ifstar\@tnorms\@tnorm}
\newcommand{\@tnorms}[1]{%
  \left|\mkern-1.5mu\left|\mkern-1.5mu\left|
   #1
  \right|\mkern-1.5mu\right|\mkern-1.5mu\right|
}
\newcommand{\@tnorm}[2][]{%
  \mathopen{#1|\mkern-1.5mu#1|\mkern-1.5mu#1|}
  #2
  \mathclose{#1|\mkern-1.5mu#1|\mkern-1.5mu#1|}
}
\makeatother

\newcommand{\jump}[1]{\llbracket #1 \rrbracket}
\newcommand{\av}[1]{\{\!\!\{#1\}\!\!\}}
\newtheorem{theorem}{Theorem}
\newtheorem{lemma}{Lemma}

\newtheorem{remark}{Remark}
\newtheorem{definition}{Definition}
\title{Robust preconditioning for an HDG discretization of the
  time-dependent Stokes equations}
\author{
  E. Henr\'iquez\thanks{Department of Applied Mathematics, University of
    Waterloo, ON, Canada (\url{ehenriqu@uwaterloo.ca}),
    \url{http://orcid.org/0000-0002-0243-0368}}
  \and
  J. J. Lee\thanks{Department of Mathematics, Baylor University,
    TX, USA (\url{jeonghun_lee@baylor.edu}),
    \url{https://orcid.org/0000-0001-5201-8526}}
  \and
  S. Rhebergen\thanks{Department of Applied Mathematics, University of
    Waterloo, ON, Canada (\url{srheberg@uwaterloo.ca}),
    \url{http://orcid.org/0000-0001-6036-0356}}}
\begin{document}
\maketitle
\begin{abstract}
  We present parameter-robust preconditioners for linear systems that
  arise after applying static condensation to a hybridizable
  discontinuous Galerkin (HDG) discretization of the time-dependent
  Stokes problem. Building upon the theoretical framework introduced
  in our previous work [SIAM Journal on Scientific Computing,
  47(6):A3212-A3238, 2025], we extend the analysis to derive new
  preconditioners that remain robust with respect to all physical and
  discretization parameters. The construction relies on first
  establishing uniform well-posedness of the HDG formulation (before
  static condensation) through appropriately defined norms. Based on
  this result, we identify sufficient conditions that a norm on the
  face space must satisfy to guarantee parameter-robustness of the
  resulting preconditioner for the statically condensed HDG
  system. Numerical experiments in two and three dimensions verify our
  theoretical results.
\end{abstract}
\section{Introduction}
\label{s:introduction}

The time-dependent Stokes equations play a key role in the modelling
of viscous flows, for example, in semi-implicit time-stepping schemes
for the numerical approximation of the Navier--Stokes equations. Fast
solvers for this problem are essential in large-scale simulations in
which preconditioners play an essential role in the design of
efficient iterative methods. Preconditioning the time-dependent Stokes
equations has been extensively studied for non-hybridized
formulations. For example, the classical work \cite{cahouet1988some}
introduced one of the first preconditioners for the time-dependent
Stokes equations. Other approaches include those proposed in
\cite{bramble1997iterative,olshanskii2006uniform} and
\cite{mardal2004uniform} in which the latter developed a
preconditioner within the framework of norm-equivalent
parameter-preconditioning, as reviewed by Mardal and Winther in
\cite{mardal2011preconditioning}. The main difficulty in designing
preconditioners for the time-dependent Stokes equations is related to
the ratio $\nu/\tau$, where $\tau$ is the inverse of a discrete time
step and $\nu$ is the viscosity parameter. As this ratio approaches
zero, the resulting system becomes equivalent to the Darcy problem;
therefore, a preconditioner that is effective for the steady Stokes
system is not parameter-robust when this ratio is small.

Hybridizable discontinuous Galerkin (HDG) methods were introduced by
Cockburn et al. \cite{cockburn2009unified} with the aim of reducing
the high computational cost associated with solving the linear systems
arising from classical discontinuous Galerkin (DG) methods. This is
achieved by introducing additional unknowns defined on cell faces and
applying static condensation in which cell unknowns are eliminated
from the linear system.

The development of fast and robust solvers for the reduced system
obtained by applying static condensation to an HDG discretization
remains an active area of research. Various approaches have been
investigated, including multigrid methods
\cite{cockburn2014multigrid,he2021local,lu2022analysis,lu2022homogeneous,lu2024homogeneous},
domain decomposition techniques
\cite{tu2020analysis,tu2021bddc,zhang2022robust}, Schwarz methods
\cite{lu2023two,yu2024nonoverlapping}, auxiliary space preconditioners
\cite{fu2021uniform}, and AIR algebraic multigrid for space-time
problems \cite{sivas2021air}. Regarding the development of block
preconditioners, a variety of strategies have been proposed for
different problems, including the Stokes problem
\cite{henriquez2025parameter,rhebergen2018preconditioning,rhebergen2022preconditioning},
Darcy flow \cite{henriquez2025parameter}, Biot's equation
\cite{henriquez2025preconditioning,kraus2021uniformly}, quasi-static
multiple-network poroelastic theory model (MPET)
\cite{kraus2023hybridized}, linear elasticity and generalized Stokes
problems \cite{fu2023uniform}, and the stationary Navier--Stokes
problem
\cite{lindsay2025preconditioning,sivas2021preconditioning,southworth2020fixed}.

In our previous work \cite{henriquez2025parameter}, we presented an
extension of the Mardal--Winther framework
\cite{mardal2011preconditioning} to design parameter-robust
preconditioners for the reduced system arising from symmetric
hybridizable discretizations. The technique consists of first
determining parameter dependent inner products and their induced norms
in which the non-condensed linear system is both uniformly bounded and
inf-sup stable. This inner product then defines a parameter-robust
preconditioner for the non-condensed system (see
\cref{s:mardalwinther}). Then, by eliminating the cell
degrees-of-freedom from this preconditioner, we obtain a
preconditioner for the reduced problem. This reduced preconditioner is
therefore the Schur complement of the matrix representation associated
with the inner products used to define the non-condensed
preconditioner. This Schur complement defines an inner product on the
face-space (see \cref{eq:innerprodXh}) which in turn induces a
``face-norm''. In \cite{henriquez2025parameter} we identified a
condition for this face-norm in relation to the norm on the
non-condensed space (see \cref{eq:henriquez2025}) -- we will refer to
this condition as the \emph{face-norm condition} -- that if satisfied
then the reduced preconditioner will be a parameter-robust
preconditioner for the reduced HDG discretization.

This manuscript consists of three main results. The first main result
is a generalization of the face-norm condition. In particular, we now
show that \emph{any} inner product that induces a norm on the faces
that satisfies a generalized face-norm condition defines a
parameter-robust preconditioner for a reduced system resulting from a
symmetric hybridizable discretization (see
\cref{thm:cond-precon}). The advantage of this generalization is that
we are no longer restricted by using the Schur complement of a
preconditioner originally derived for a hybridizable system before
static condensation as preconditioner for the reduced problem.  This
generalization is especially useful when dealing with intersections
and sums of Hilbert spaces. The second main result of this manuscript
is proving uniform well-posedness of the non-condensed HDG
discretization of the time-dependent Stokes equations. This is a key
step in the construction of parameter-robust preconditioners for the
reduced HDG discretization. The third main result of this manuscript
is the application and verification of the aforementioned general
preconditioning framework for symmetric hybridizable discretizations
to derive new parameter-robust preconditioners for the reduced HDG
discretization of the time-dependent Stokes equations.

This manuscript is organized as follows. In \cref{s:precongen} we
present our first main result, i.e., a general preconditioning
framework for symmetric hybridizable discretizations; this section
presents a generalization of our work in
\cite{henriquez2025parameter}. In \cref{s:t-stokes} we present the
time-dependent Stokes problem and its HDG discretization, while our
second main result, i.e., a proof of uniform well-posedness of this
HDG method before static condensation, is given in
\cref{s:uniform-wp}. The third main result, parameter-robust
preconditioners for the reduced form of the hybridizable
discretization of the time-dependent Stokes equations, is presented in
\cref{s:precon}. Our theoretical findings are verified by numerical
experiments in \cref{s:numex} while conclusions are drawn in
\cref{s:conclusions}.

\section{General preconditioning framework}
\label{s:precongen}

In this section we present a general preconditioning framework for
symmetric hybridizable discretizations. In \cref{s:mardalwinther} we
first briefly summarize the Mardal--Winter framework as presented in
\cite{mardal2011preconditioning}. We then summarize its extension to
hybridizable discretizations, as presented in
\cite{henriquez2025parameter}, in \cref{ss:preconframeworkscold}. A
new generalization of this extension is presented in
\cref{ss:preconframeworkscnew}. We start by introducing some notation.

Denote by $\mathcal{T}_h$ a mesh of mesh size $h$ and denote by
$\boldsymbol{X}_h$ and $\boldsymbol{X}_h^*$ a finite element space
defined on $\mathcal{T}_h$ and its dual, respectively. The pairing of
$\boldsymbol{X}_h^*$ and $\boldsymbol{X}_h$ is denoted by
$\langle \cdot, \cdot
\rangle_{\boldsymbol{X}_h^*,\boldsymbol{X}_h}$. An inner product
defined on $\boldsymbol{X}_h$ is denoted by
$(\cdot, \cdot)_{\boldsymbol{X}_h}$. The norm induced by this inner
product is denoted by $\norm[0]{\cdot}_{\boldsymbol{X}_h}$.

Let $\mathcal{L}(\boldsymbol{X}_h,\boldsymbol{Y}_h)$ be the set of
bounded linear operators mapping $\boldsymbol{X}_h$ to
$\boldsymbol{Y}_h$. Let
$A \in \mathcal{L}(\boldsymbol{X}_h, \boldsymbol{X}_h)$. We have the
following definitions:
\begin{align*}
  \norm[0]{A}_{\mathcal{L}(\boldsymbol{X}_h, \boldsymbol{X}_h)}
  &= \sup_{\boldsymbol{x}_h,\boldsymbol{y}_h \in \boldsymbol{X}_h}
    \frac{(A\boldsymbol{x}_h, \boldsymbol{y}_h)_{\boldsymbol{X}_h}}{\norm[0]{\boldsymbol{x}_h}_{\boldsymbol{X}_h} \norm[0]{\boldsymbol{y}_h}_{\boldsymbol{X}_h}},
  \\
  \norm[0]{A^{-1}}^{-1}_{\mathcal{L}(\boldsymbol{X}_h, \boldsymbol{X}_h)}
  &= \inf_{\boldsymbol{x}_h\in\boldsymbol{X}_h} \sup_{\boldsymbol{y}_h \in \boldsymbol{X}_h}
    \frac{(A\boldsymbol{x}_h, \boldsymbol{y}_h)_{\boldsymbol{X}_h}}{\norm[0]{\boldsymbol{x}_h}_{\boldsymbol{X}_h} \norm[0]{\boldsymbol{y}_h}_{\boldsymbol{X}_h}}.
\end{align*}
The condition number of $A$ is given by
$\kappa(A) := \norm[0]{A}_{\mathcal{L}(\boldsymbol{X}_h,
  \boldsymbol{X}_h)} \norm[0]{A^{-1}}_{\mathcal{L}(\boldsymbol{X}_h,
  \boldsymbol{X}_h)}$. It is known that the convergence rate of a
Krylov subspace method applied to a symmetric problem of the form
$A\boldsymbol{x}_h = \boldsymbol{b}_h$ can be bounded in terms of
$\kappa(A)$.

\subsection{The Mardal--Winther framework}
\label{s:mardalwinther}

Let $a_h(\cdot, \cdot)$ be a symmetric bilinear form
on $\boldsymbol{X}_h \times \boldsymbol{X}_h$ and consider the
problem: Given $\boldsymbol{f}_h \in \boldsymbol{X}_h^*$, find
$\boldsymbol{x}_h \in \boldsymbol{X}_h$ such that
\begin{equation}
  \label{eq:genproblem}
  a_h(\boldsymbol{x}_h, \boldsymbol{y}_h)
  = \langle \boldsymbol{f}_h, \boldsymbol{y}_h \rangle_{\boldsymbol{X}_h^*,\boldsymbol{X}_h}
  \qquad \forall \boldsymbol{y}_h \in \boldsymbol{X}_h.
\end{equation}
The discrete problem \cref{eq:genproblem} can equivalently be written
as
\begin{equation}
  \label{eq:genproblemequiv}
  A \boldsymbol{x}_h = \boldsymbol{f}_h \quad \text{in } \boldsymbol{X}_h^*,
\end{equation}
for the unknown $\boldsymbol{x}_h \in \boldsymbol{X}_h$ in which $A$
is the operator defined by
$\langle A\boldsymbol{x}_h, \boldsymbol{y}_h
\rangle_{\boldsymbol{X}_h^*,\boldsymbol{X}_h} =
a_h(\boldsymbol{x}_h,\boldsymbol{y}_h)$ for all
$\boldsymbol{x}_h,\boldsymbol{y}_h \in \boldsymbol{X}_h$.

Assume $a_h(\cdot, \cdot)$ is uniformly bounded and inf-sup stable in
$\norm[0]{\cdot}_{\boldsymbol{X}_h}$, i.e., assume there exist uniform
constants (constants independent of the mesh-size and problem
parameters) $c_1, c_2 > 0$ such that
\begin{subequations}
  \label{eq:uniformstabinfsup}
  \begin{align}
    \label{eq:uniformstabinfsup_a}
    a_h(\boldsymbol{x}_h, \boldsymbol{y}_h)
    &\le c_1 \norm[0]{\boldsymbol{x}_h}_{\boldsymbol{X}_h} \norm[0]{\boldsymbol{y}_h}_{\boldsymbol{X}_h} && \forall \boldsymbol{x}_h, \boldsymbol{y}_h \in \boldsymbol{X}_h,
    \\
    \label{eq:uniformstabinfsup_b}
    \inf_{\boldsymbol{x}_h\in \boldsymbol{X}_h} \sup_{\boldsymbol{y}_h \in \boldsymbol{X}_h}
    \frac{a_h(\boldsymbol{x}_h, \boldsymbol{y}_h)}{\norm[0]{\boldsymbol{x}_h}_{\boldsymbol{X}_h} \norm[0]{\boldsymbol{y}_h}_{\boldsymbol{X}_h}} &\ge c_2.
  \end{align}
\end{subequations}
The Mardal--Winther framework \cite{mardal2011preconditioning} shows
that if a preconditioner
$P^{-1} : \boldsymbol{X}_h^* \to \boldsymbol{X}_h$ is defined by
\begin{equation}
  \label{eq:preconMWP}
  (P^{-1}\boldsymbol{f}_h, \boldsymbol{y}_h)_{\boldsymbol{X}_h} =
  \langle \boldsymbol{f}_h, \boldsymbol{y}_h
  \rangle_{\boldsymbol{X}_h^*,\boldsymbol{X}_h}
  \qquad \forall \boldsymbol{y}_h \in \boldsymbol{X}_h,
  \quad \forall \boldsymbol{f}_h \in \boldsymbol{X}_h^*,
\end{equation}
then the condition number of $P^{-1}A$ is bounded by $c_1/c_2$. Since
$c_1$ and $c_2$ are uniform constants, the condition number is
independent of discretization and problem parameters and so $P^{-1}$
is a parameter-robust preconditioner for problems in which $A$ is
symmetric.

\subsection{An extension of the Mardal--Winther framework to
  hybridizable systems}
\label{ss:preconframeworkscold}

Assume $\boldsymbol{X}_h := X_h \times \bar{X}_h$ so that any
$\boldsymbol{x}_h \in \boldsymbol{X}_h$ can be written as
$\boldsymbol{x}_h = (x_h, \bar{x}_h)$ with $x_h \in X_h$ and
$\bar{x}_h \in \bar{X}_h$. We can then write \cref{eq:genproblemequiv}
as
\begin{equation}
  \label{eq:hybridsystem}
  \begin{bmatrix}
    A_{11} & A_{21}^T \\ A_{21} & A_{22}
  \end{bmatrix}
  \begin{bmatrix}
    x_h \\ \bar{x}_h
  \end{bmatrix}
  =
  \begin{bmatrix}
    f_h \\ \bar{f}_h
  \end{bmatrix},
\end{equation}
with $A_{11} : X_h \to X_h^*$, $A_{21} : X_h \to \bar{X}_h^*$, and
$A_{22} : \bar{X}_h \to \bar{X}_h^*$. If \cref{eq:hybridsystem} is
obtained from a hybridizable discretization, and assuming $x_h$ are
the local degrees of freedom, then $A_{11}$ is block
diagonal. Eliminating $x_h$ from \cref{eq:hybridsystem} we obtain the
following reduced problem for $\bar{x}_h$:
\begin{equation}
  \label{eq:reducedprob}
  S_A \bar{x}_h = \bar{b}_h
\end{equation}
where $S_A := A_{22} - A_{21}A_{11}^{-1}A_{21}^T$ is the Schur
complement of the matrix in \cref{eq:hybridsystem} and
$\bar{b}_h := \bar{f}_h - A_{21}A_{11}^{-1}f_h$.

Let $P : \boldsymbol{X}_h \to \boldsymbol{X}_h^*$, defined in
\cref{eq:preconMWP}, have the same block structure as $A$, i.e.,
\begin{equation}
  \label{eq:preconPgen}
  P =
  \begin{bmatrix}
    P_{11} & P_{21}^T \\ P_{21} & P_{22}
  \end{bmatrix},
\end{equation}
and let $S_P := P_{22} - P_{21}P_{11}^{-1}P_{21}^T$ be the Schur
complement of $P$. Assume $S_P:\bar{X}_h \to \bar{X}_h^*$ is a
positive operator, i.e., $S_P$ is symmetric and
$\langle S_P\bar{x}_h, \bar{x}_h \rangle_{\bar{X}_h^*,\bar{X}_h} > 0$
for all $\bar{x}_h \in \bar{X}_h \backslash \cbr[0]{0}$. Then $S_P$
defines an inner product on $\bar{X}_h$:
\begin{equation}
  \label{eq:innerprodXh}
  (\bar{x}_h, \bar{y}_h)_{\bar{X}_h}
  := \langle S_P \bar{x}_h, \bar{y}_h \rangle_{\bar{X}_h^*,\bar{X}_h}
  \quad \forall \bar{x}_h, \bar{y}_h \in \bar{X}_h.
\end{equation}
In \cite[Theorem 2.3]{henriquez2025parameter} we proved that if there
exists a uniform constant $c_l > 0$ such that
\begin{equation}
  \label{eq:henriquez2025}
  \norm[0]{(-A_{11}^{-1}A_{21}^T\bar{x}_h, \bar{x}_h)}_{\boldsymbol{X}_h}
  \le c_l \norm[0]{\bar{x}_h}_{\bar{X}_h} \quad \forall \bar{x}_h \in \bar{X}_h,
\end{equation}
where $\norm[0]{\cdot}_{\bar{X}_h}$ is the norm induced by the inner
product $(\cdot, \cdot)_{\bar{X}_h}$, then $S_P$ is a parameter-robust
preconditioner for the reduced problem \cref{eq:reducedprob}. We will
refer to \cref{eq:henriquez2025} as the \emph{face-norm condition}.

\subsection{A new generalization of the Mardal--Winther framework for
  hybridizable systems}
\label{ss:preconframeworkscnew}

The following theorem presents a generalization to the face-norm
condition \cref{eq:henriquez2025} to obtain parameter-robust
preconditioners for \cref{eq:reducedprob}.

\begin{theorem}
  \label{thm:cond-precon}
  Let $A$, $S_A$, and $P$ be the operators defined in
  \cref{ss:preconframeworkscold}. Furthermore, let
  $\bar{P} : \bar{X}_h \to \bar{X}_h^*$ be any operator that defines
  an inner product $(\cdot, \cdot)_{\bar{X}_h}$ on $\bar{X}_h$ in the
  sense that
  \begin{equation*}
    \langle \bar{P} \bar{x}_h, \bar{y}_h\rangle_{\bar{X}_h^*, \bar{X}_h}
    = (\bar{x}_h, \bar{y}_h)_{\bar{X}_h}
    \quad \forall \bar{x}_h, \bar{y}_h \in \bar{X}_h.
  \end{equation*}
  Assume that \cref{eq:hybridsystem} is uniformly well-posed in the
  $\norm[0]{\cdot}_{\boldsymbol{X}_h}$-norm and that $A_{11}$ is
  invertible. If there exist uniform constants $c_l, c_u > 0$ such
  that
  \begin{subequations}
    \label{eq:cond-precon-1}
    \begin{align}
      \label{eq:cond-precon-1a}
      \norm[0]{\boldsymbol{x}_h}_{\boldsymbol{X}_h}
      & \geq c_l \norm[0]{\bar{x}_h}_{\bar{X}_h},
      \\
      \label{eq:cond-precon-1b}
      \norm[0]{(- A_{11} A_{21}^T \bar{x}_h, \bar{x}_h)}_{\boldsymbol{X}_h}
      & \leq c_u \norm[0]{\bar{x}_h}_{\bar{X}_h},
    \end{align}
  \end{subequations}
  for all $\boldsymbol{x}_h \in \boldsymbol{X}_h$, then
  \begin{equation}
    \label{eq:cond-precon-2}
    \norm[0]{\bar{P}^{-1}S_A}_{\mathcal{L}(\bar{X}_h, \bar{X}_h)},
    \quad \norm[0]{(\bar{P}^{-1}S_A)^{-1}}_{\mathcal{L}(\bar{X}_h , \bar{X}_h)},
  \end{equation}
  are uniformly bounded. 
\end{theorem}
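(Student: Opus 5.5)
Note first that \cref{eq:cond-precon-1b} contains a typo: it should read $-A_{11}^{-1}A_{21}^T\bar{x}_h$, in line with \cref{eq:henriquez2025}. Likewise, \cref{eq:cond-precon-1a} should be read for $\boldsymbol{x}_h=(x_h,\bar{x}_h)$, bounding $\norm[0]{\bar{x}_h}_{\bar{X}_h}$ by the full norm. With these readings, the plan is to bound the two quantities in \cref{eq:cond-precon-2} separately. Both are expressed through $\langle S_A\bar{x}_h,\bar{y}_h\rangle$, since
\begin{equation*}
(\bar{P}^{-1}S_A\bar{x}_h,\bar{y}_h)_{\bar{X}_h}=\langle S_A\bar{x}_h,\bar{y}_h\rangle_{\bar{X}_h^*,\bar{X}_h}.
\end{equation*}
The key algebraic identity concerns the lifting $E\bar{x}_h:=(-A_{11}^{-1}A_{21}^T\bar{x}_h,\bar{x}_h)\in\boldsymbol{X}_h$. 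For every $\boldsymbol{y}_h=(y_h,\bar{y}_h)$ we have
\begin{equation*}
\langle AE\bar{x}_h,\boldsymbol{y}_h\rangle=\langle S_A\bar{x}_h,\bar{y}_h\rangle,
\end{equation*}
because the first block row of $AE\bar{x}_h$ vanishes. In particular $a_h(E\bar{x}_h,E\bar{y}_h)=\langle S_A\bar{x}_h,\bar{y}_h\rangle$.

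\textbf{Boundedness.} Write $\langle S_A\bar{x}_h,\bar{y}_h\rangle=a_h(E\bar{x}_h,E\bar{y}_h)$ and apply \cref{eq:uniformstabinfsup_a} followed by \cref{eq:cond-precon-1b} twice. This gives
\begin{equation*}
\langle S_A\bar{x}_h,\bar{y}_h\rangle\le c_1c_u^2\norm[0]{\bar{x}_h}_{\bar{X}_h}\norm[0]{\bar{y}_h}_{\bar{X}_h},
\end{equation*}
so $\norm[0]{\bar{P}^{-1}S_A}\le c_1c_u^2$.

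\textbf{Inverse bound (inf-sup for $S_A$).} Fix $\bar{x}_h$. By \cref{eq:uniformstabinfsup_b}, there is $\boldsymbol{y}_h=(y_h,\bar{y}_h)$ with
\begin{equation*}
a_h(E\bar{x}_h,\boldsymbol{y}_h)\ge c_2\norm[0]{E\bar{x}_h}_{\boldsymbol{X}_h}\norm[0]{\boldsymbol{y}_h}_{\boldsymbol{X}_h}.
\end{equation*}
By the identity, the left side equals $\langle S_A\bar{x}_h,\bar{y}_h\rangle$. Applying \cref{eq:cond-precon-1a} to $E\bar{x}_h$ gives $\norm[0]{E\bar{x}_h}\ge c_l\norm[0]{\bar{x}_h}_{\bar{X}_h}$, and applying it to $\boldsymbol{y}_h$ gives $\norm[0]{\boldsymbol{y}_h}\ge c_l\norm[0]{\bar{y}_h}_{\bar{X}_h}$. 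Together these yield
\begin{equation*}
\langle S_A\bar{x}_h,\bar{y}_h\rangle\ge c_2c_l^2\norm[0]{\bar{x}_h}_{\bar{X}_h}\norm[0]{\bar{y}_h}_{\bar{X}_h}.
\end{equation*}
In the degenerate case $\bar{y}_h=0$, the left side would be zero while the right side $c_2\norm[0]{E\bar{x}_h}\norm[0]{\boldsymbol{y}_h}$ is positive for $\bar{x}_h\neq0$, a contradiction. Hence $\bar{y}_h\ne0$ and the supremum over $\bar{y}_h$ is legitimate. It follows that $\norm[0]{(\bar{P}^{-1}S_A)^{-1}}\le (c_2c_l^2)^{-1}$, and the condition number is bounded by $c_1c_u^2/(c_2c_l^2)$.

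The main obstacle is the inf-sup step. There, the test function supplied by well-posedness is a full $\boldsymbol{y}_h$ rather than a lifted $E\bar{y}_h$, and the argument works only because of two things: the identity kills the cell component, so only $\bar{y}_h$ survives, and \cref{eq:cond-precon-1a} must hold for arbitrary $\boldsymbol{y}_h$, not merely for liftings. This is exactly where the generalization beyond \cref{eq:henriquez2025} is used. The careful points are verifying that the identity holds for general (not lifted) test functions and handling the $\bar{y}_h=0$ case as above.
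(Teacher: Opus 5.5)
Your proof is correct and follows the paper's own argument. The inverse bound is exactly the paper's: you restrict the inf-sup condition to $\boldsymbol{x}_h=(-A_{11}^{-1}A_{21}^T\bar{x}_h,\bar{x}_h)$, observe that only $\langle S_A\bar{x}_h,\bar{y}_h\rangle$ survives for arbitrary $\boldsymbol{y}_h$, and apply \cref{eq:cond-precon-1a} to both factors to get $c_l^2c_2$. The boundedness part, which the paper omits by citing its earlier work, you supply correctly via $a_h(E\bar{x}_h,E\bar{y}_h)\le c_1c_u^2\norm[0]{\bar{x}_h}_{\bar{X}_h}\norm[0]{\bar{y}_h}_{\bar{X}_h}$, and your reading of \cref{eq:cond-precon-1b} with $A_{11}^{-1}$ is the intended one.
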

\begin{proof}
  The proof to show that
  $\norm[0]{\bar{P}^{-1}S_A}_{\mathcal{L}(\bar{X}_h , \bar{X}_h)}$ is
  uniformly bounded follows identical steps as used in the proof of
  \cite[Theorem 2.3]{henriquez2025parameter} and is therefore
  omitted. We therefore only prove that
  $\norm[0]{(\bar{P}^{-1}S_A)^{-1}}_{\mathcal{L}(\bar{X}_h ,
    \bar{X}_h)}$ is uniformly bounded.

  Since $A$ is well-posed \cref{eq:uniformstabinfsup}
  holds. Therefore,
  \begin{align}
    \label{eq:cond-precon-aux2}
    \begin{split}
      c_2 & \leq \inf_{\boldsymbol{x}_h \in \boldsymbol{X}_h}
            \sup_{\boldsymbol{y}_h \in \boldsymbol{X}_h}
            \dfrac{\langle A\boldsymbol{x}_h, \boldsymbol{y}_h\rangle_{\boldsymbol{X}_h^*, \boldsymbol{X}_h}}{\norm[0]{\boldsymbol{x}_h}_{\boldsymbol{X}_h} \norm[0]{\boldsymbol{y}_h}_{\boldsymbol{X}_h}}
      \\
          &= \inf_{\boldsymbol{x}_h \in \boldsymbol{X}_h}
            \sup_{\boldsymbol{y}_h \in \boldsymbol{X}_h}
            \dfrac{\langle A_{11}(x_h + A_{11}^{-1}A_{21}^T \bar{x}_h), y_h + A_{11}^{-1} A_{21}^T \bar{y}_h\rangle_{X_h^*, X_h}
            + \langle S_A \bar{x}_h, \bar{y}_h\rangle_{\bar{X}_h^*, \bar{X}_h}}{\norm[0]{\boldsymbol{x}_h}_{\boldsymbol{X}_h} \norm[0]{\boldsymbol{y}_h}_{\boldsymbol{X}_h}}
      \\
          &\leq \inf_{\bar{x}_h \in \bar{X}_h}
            \sup_{(y_h, \bar{y}_h) \in X_h \times \bar{X}_h}
            \dfrac{\langle S_A \bar{x}_h, \bar{y}_h\rangle_{\bar{X}_h^*, \bar{X}_h}}{\norm[0]{(- A_{11}^{-1} A_{21}^T \bar{x}_h, \bar{x}_h)}_{\boldsymbol{X}_h} \norm[0]{(y_h, \bar{y}_h)}_{\boldsymbol{X}_h}},
    \end{split}
  \end{align}
  where the last inequality is a consequence of choosing
  $x_h = - A_{11}^{-1} A_{21}^T \bar{x}_h$. Using
  \cref{eq:cond-precon-1a} we find
  \begin{align*}
    c_l^2 c_2
    &\leq \inf_{\bar{x}_h \in \bar{X}_h}
      \sup_{\bar{y}_h \in \bar{X}_h}
      \frac{\langle S_A \bar{x}_h, \bar{y}_h\rangle_{\bar{X}_h^*, \bar{X}_h}}{\norm[0]{\bar{x}_h}_{\bar{X}_h} \norm[0]{\bar{y}_h}_{\bar{X}_h}}
    \\
    &= \inf_{\bar{x}_h \in \bar{X}_h} 
      \sup_{\bar{y}_h \in \bar{X}_h}
      \frac{(\bar{P}^{-1} S_A \bar{x}_h, \bar{y}_h)_{\bar{X}_h}}{\norm[0]{\bar{x}_h}_{\bar{X}_h} \norm[0]{\bar{y}_h}_{\bar{X}_h}}
      = \norm[0]{(\bar{P}^{-1} S_A)^{-1}}_{\mathcal{L}(\bar{X}_h, \bar{X}_h)}^{-1},
  \end{align*}
  so that the result follows.
\end{proof}

\begin{remark}
  Note that if the conditions of \cref{thm:cond-precon} are satisfied,
  then the condition number $\kappa(\bar{P}^{-1}S_A)$ is uniformly
  bounded, i.e., $\bar{P}^{-1}$ is a parameter-robust
  preconditioner. We further emphasize that \cref{thm:cond-precon} is
  a generalization of \cite[Theorem
  2.3]{henriquez2025parameter}. Indeed, let $S_P$ be as defined in
  \cref{ss:preconframeworkscold}. Then \cite[Theorem
  2.3]{henriquez2025parameter} follows by choosing $\bar{P} = S_P$ in
  \cref{thm:cond-precon}.
\end{remark}

\section{The time-dependent Stokes equations and its discretization}
\label{s:t-stokes}

The time-dependent Stokes equations are given by
\begin{subequations}
  \label{eq:tstokes-model-1}
  \begin{align}
    \label{eq:tstokes-model-1a}
    \partial_t u - \nabla \cdot (\nu \nabla u) + \nabla p  & = \tilde{f} && \text{in }\Omega\times I,
    \\
    \label{eq:tstokes-model-1b}
    \nabla \cdot u & = 0 && \text{in }\Omega\times I,
    \\
    \label{eq:tstokes-model-1c}
    u & = 0 && \text{on }\partial \Omega\times I,
    \\
    \label{eq:tstokes-model-1d}
    u & = u_0 && \text{on } \Omega,
  \end{align}
\end{subequations}
where $\Omega \subset \mathbb{R}^d$, with $d=2$ or $d=3$, is a bounded
polygonal domain, $I = (0, T]$ is the time-interval of interest in
which $T>0$ is the final time, $u$ is the fluid velocity, $p$ is the
pressure (which is assumed to have zero mean), $\tilde{f}$ is a given
external force, $\nu>0$ is the constant viscosity parameter, and $u_0$
is a prescribed divergence-free initial velocity.

Discretizing the time-dependent Stokes equations by backward Euler
results in the following system of equations that needs to be solved
at each time-step:
\begin{subequations}
  \label{eq:tstokes-model-be}
  \begin{align}
    \label{eq:tstokes-model-bea}
    \tau u - \nabla \cdot (\nu \nabla u) + \nabla p  & = f && \text{in }\Omega,
    \\
    \label{eq:tstokes-model-beb}
    \nabla \cdot u & = 0 && \text{in }\Omega,
    \\
    \label{eq:tstokes-model-bec}
    u & = 0 && \text{on }\partial \Omega,
  \end{align}
\end{subequations}
where $\tau = 1/\Delta t$, with $\Delta t$ the time-step, and
$f := \tilde{f} + \tau \tilde{u}$ in which $\tilde{u}$ is the solution
from the previous time-step.

We discretize \cref{eq:tstokes-model-be} by the pressure-robust HDG
method presented in
\cite{rhebergen2017analysis,rhebergen2018hybridizable}. To describe
this method, let $\mathcal{T}_h$ denote a quasi-uniform mesh for the
domain consisting of simplicial cells $K$, denote by $h$ the global
mesh size, let $\mathcal{F}_h$, $\mathcal{F}_h^{int}$ and
$\mathcal{F}_h^{bnd}$ denote the sets of all faces, interior faces,
and boundary faces, respectively, and let $\Gamma_0$ denote the union
of all faces. Consider the following velocity and pressure cell and
face finite element spaces:
\begin{align*}
  V_h &:= \cbr[0]{ v_h \in [L^2(\Omega)]^d:\ v_h \in [\mathbb{P}_{k}(K)]^d,
        \ \forall K \in \mathcal{T}_h },
  \\
  \bar{V}_h &:= \cbr[0]{ \bar{v}_h \in [L^2(\Gamma_0)]^d: \
              \bar{v}_h \in [\mathbb{P}_{k}(F)]^d, \ \forall F \in \mathcal{F}_h,\
              \bar{v}_h = 0 \text{ on } \partial \Omega },
  \\
  Q_h &:= \cbr[0]{ q_h \in L^2(\Omega):\ q_h \in \mathbb{P}_{k-1}(K),
        \ \forall K \in \mathcal{T}_h } \cap L^2_0(\Omega),
  \\
  \bar{Q}_h &:= \cbr[0]{ \bar{q}_h \in L^2(\Gamma_0): \
              \bar{v}_h \in \mathbb{P}_{k}(F), \ \forall F \in \mathcal{F}_h },
\end{align*}
where $\mathbb{P}_{k}(K)$ and $\mathbb{P}_{k}(F)$ denote the sets of
polynomials of degree at most $k$ on a cell $K$ and face $F$ and
$L_0^2(\Omega)$ is the space of functions in $L^2(\Omega)$ with zero
mean. For ease of notation we write
$\boldsymbol{V}_h := V_h \times \bar{V}_h$,
$\boldsymbol{Q}_h := Q_h \times \bar{Q}_h$, and
$\boldsymbol{X}_h := \boldsymbol{V}_h \times \boldsymbol{Q}_h$, with
elements $\boldsymbol{v}_h := (v_h,\bar{v}_h) \in \boldsymbol{V}_h$,
$\boldsymbol{q}_h := (q_h, \bar{q}_h) \in \boldsymbol{Q}_h$, and
$\boldsymbol{y}_h := (\boldsymbol{v}_h, \boldsymbol{q}_h) \in
\boldsymbol{X}_h$.

We define $(u, v)_K := \int_K u \odot v \dif x$ and
$\langle u, v \rangle_{\partial K} := \int_{\partial K} u \odot v \dif
s$ where $\odot$ is multiplication if $u,v$ are scalar functions, the
dot product if $u,v$ are vector functions, and the Frobenius inner product if
$u,v$ are matrix functions. We then define
$(u,v)_{\mathcal{T}_h} := \sum_{K\in \mathcal{T}_h}(u,v)_K$,
$\langle u,v \rangle_{\partial \mathcal{T}_h} := \sum_{K \in
  \mathcal{T}_h} \langle u,v \rangle_{\partial K}$.

To define the HDG method we require the following bilinear forms for
$\boldsymbol{u}_h,\boldsymbol{v}_h \in \boldsymbol{V}_h$ and
$\boldsymbol{p}_h,\boldsymbol{q}_h \in \boldsymbol{Q}_h$:
\begin{subequations}
  \label{eq:bilinear-forms}
  \begin{align}
    \label{eq:bilinear-forms-a}
    d_h(\boldsymbol{u}_h, \boldsymbol{v}_h)
    :=& (\nu \nabla u_h, \nabla v_h)_{\mathcal{T}_h}
        + \langle \nu \eta h_K^{-1} (u_h - \bar{u}_h),
        v_h - \bar{v}_h \rangle_{\partial \mathcal{T}_h}
    \\ \nonumber
    & - \langle \nu \nabla u_hn, v_h - \bar{v}_h\rangle_{\partial\mathcal{T}_h}
      - \langle \nu \nabla v_hn, u_h - \bar{u}_h\rangle_{\partial\mathcal{T}_h},
    \\
    \label{eq:bilinear-forms-b}
    b_h(v_h, \boldsymbol{q}_h) 
    :=& - (q_h, \nabla \cdot v_h)_{\mathcal{T}_h}
        + \langle \bar{q}_h, v_h \cdot n \rangle_{\partial \mathcal{T}_h}
    \\ \nonumber
    =& (\nabla q_h, v_h)_{\mathcal{T}_h} - \langle q_h-\bar{q}_h, v_h \cdot n \rangle_{\partial \mathcal{T}_h}, 
    \\
    \label{eq:bilinear-forms-c}
    a_h((\boldsymbol{u}_h,\boldsymbol{p}_h), (\boldsymbol{v}_h,\boldsymbol{q}_h))
    :=& \tau (u_h, v_h)_{\mathcal{T}_h} 
        + d_h(\boldsymbol{u}_h, \boldsymbol{v}_h)
        + b_h(v_h, \boldsymbol{p}_h)
        + b_h(u_h, \boldsymbol{q}_h),
  \end{align}
\end{subequations}
where $h_K$ is the diameter of a cell $K \in \mathcal{T}_h$, $n$ is
the outward unit normal vector on $\partial K$, and $\eta > 1$ is the
interior penalty parameter.

\begin{definition}[The HDG method]
  The HDG method for \cref{eq:tstokes-model-be} is defined as: Find
  $\boldsymbol{x}_h := (\boldsymbol{u}_h, \boldsymbol{p}_h) \in
  \boldsymbol{X}_h$ such that
  \begin{equation}
    \label{eq:hdgmethod}
    a_h(\boldsymbol{x}_h, \boldsymbol{y}_h)
    = (f, v_h)_{\mathcal{T}_h}
    \qquad \forall \boldsymbol{y}_h = (\boldsymbol{v}_h, \boldsymbol{q}_h) \in
    \boldsymbol{X}_h.
  \end{equation}
\end{definition}

\section{Uniform well-posedness}
\label{s:uniform-wp}

\subsection{Inner products and norms}
\label{ss:innerprodsnorms}

We start by defining the following parameter-dependent inner products
for $\boldsymbol{u}_h,\boldsymbol{v}_h \in \boldsymbol{V}_h$ and
$\boldsymbol{p}_h,\boldsymbol{q}_h \in \boldsymbol{Q}_h$:
\begin{subequations}
  \label{eq:inner-p}
  \begin{align}
    \label{eq:inner-p-c}
    (\boldsymbol{u}_h, \boldsymbol{v}_h)_{v,1}
    &:= (\nabla u_h, \nabla v_h)_{\mathcal{T}_h}
      + \eta \langle h_K^{-1}(u_h - \bar{u}_h), v_h - \bar{v}_h \rangle_{\partial \mathcal{T}_h},
    \\
    \label{eq:inner-p-d}
    (\boldsymbol{p}_h, \boldsymbol{q}_h)_{q,0}
    &:= (p_h, q_h)_{\mathcal{T}_h}
      + \eta^{-1} \langle h_K \bar{p}_h, \bar{q}_h\rangle_{\partial \mathcal{T}_h},
    \\
    \label{eq:inner-p-d0}
    (\boldsymbol{p}_h, \boldsymbol{q}_h)_{q,0*}
    &:= (p_h, q_h)_{\mathcal{T}_h}
      + \eta^{-1} \langle h_K (p_h-\bar{p}_h), (q_h-\bar{q}_h)\rangle_{\partial \mathcal{T}_h},
    \\
    \label{eq:inner-p-e}
    (\boldsymbol{p}_h, \boldsymbol{q}_h)_{q,1}
    &:= (\nabla p_h, \nabla q_h)_{\mathcal{T}_h}
      + \eta \langle h_K^{-1}(p_h - \bar{p}_h), q_h - \bar{q}_h \rangle_{\partial \mathcal{T}_h},
    \\
    \label{eq:inner-p-a}
    (\boldsymbol{u}_h, \boldsymbol{v}_h)_v
    &:= \tau(u_h, v_h)_{\mathcal{T}_h}
      + \nu (\boldsymbol{u}_h, \boldsymbol{v}_h)_{v,1},
    \\
    \label{eq:inner-p-b}
    (\boldsymbol{p}_h, \boldsymbol{q}_h)_q
    &:= \inf_{\substack{\boldsymbol{p}_h = \boldsymbol{p}_{1,h} + \boldsymbol{p}_{2,h}
    \\ \boldsymbol{q}_h = \boldsymbol{q}_{1,h} + \boldsymbol{q}_{2,h}}}
    \del[1]{ \nu^{-1} (\boldsymbol{p}_{1,h}, \boldsymbol{q}_{1,h})_{q,0}
    + \tau^{-1} (\boldsymbol{p}_{2,h}, \boldsymbol{q}_{2,h})_{q,1}},
    \\
    \label{eq:inner-p-bstar}
    (\boldsymbol{p}_h, \boldsymbol{q}_h)_{q*}
    &:= \inf_{\substack{\boldsymbol{p}_h = \boldsymbol{p}_{1,h} + \boldsymbol{p}_{2,h}
    \\ \boldsymbol{q}_h = \boldsymbol{q}_{1,h} + \boldsymbol{q}_{2,h}}}
    \del[1]{ \nu^{-1} (\boldsymbol{p}_{1,h}, \boldsymbol{q}_{1,h})_{q,0*}
    + \tau^{-1} (\boldsymbol{p}_{2,h}, \boldsymbol{q}_{2,h})_{q,1}}.
  \end{align}
\end{subequations}
These inner products induce norms which are denoted by
$\tnorm{\boldsymbol{v}_h}_{v,1}$, $\tnorm{\boldsymbol{q}_h}_{q,0}$,
$\tnorm{\boldsymbol{q}_h}_{q,0*}$, $\tnorm{\boldsymbol{q}_h}_{q,1}$,
$\tnorm{\boldsymbol{v}_h}_v$, $\tnorm{\boldsymbol{q}_h}_q$, and
$\tnorm{\boldsymbol{q}_h}_{q*}$ respectively. Observe that
$\tnorm{\boldsymbol{v}_h}_v$ and $\tnorm{\boldsymbol{q}_h}_q$ are
discrete versions of norms on
$\tau [L^2(\Omega)]^d \cap \nu [H_0^1(\Omega)]^d$ and
$\tau^{-1} (H^1(\Omega)\cap L_0^2(\Omega)) + \nu^{-1} L_0^2(\Omega)$,
see \cite{mardal2004uniform}. Observe also that
$\tnorm{\boldsymbol{q}_h}_{q,0}$ and $\tnorm{\boldsymbol{q}_h}_{q,0*}$
are equivalent norms up to mesh- and parameter-independent
constants. Likewise, $\tnorm{\boldsymbol{q}_h}_{q}$ and
$\tnorm{\boldsymbol{q}_h}_{q*}$ are equivalent norms up to uniform
constants. We furthermore define the following inner product on
$\boldsymbol{X}_h$:
\begin{equation}
  \label{eq:inner-p-X}
  ((\boldsymbol{u}_h, \boldsymbol{p}_h), (\boldsymbol{v}_h, \boldsymbol{q}_h))_{\boldsymbol{X}_h}
  := (\boldsymbol{u}_h, \boldsymbol{v}_h)_v
  + (\boldsymbol{p}_h, \boldsymbol{q}_h)_q.
\end{equation}
Its induced norm is defined as
$\tnorm{\boldsymbol{x}_h}_{\boldsymbol{X}_h}^2 :=
\tnorm{\boldsymbol{v}_h}_v^2 + \tnorm{\boldsymbol{q}_h}_q^2$.

Let $F$ be an interior face shared by cells $K^+$ and $K^-$ and denote
by $w^{\pm}$ the traces of $w$ on $F$ taken from the interior of
$K^{\pm}$. The usual jump operator is defined as $\jump{w}=w^+ - w^-$
on interior faces and as $\jump{w} = w$ on boundary faces. For
$v_h \in V_h$ we define the usual DG norm
\begin{equation*}
  \norm[0]{v_h}_{dg}^2 
  := \norm[0]{\nabla v_h}_{\mathcal{T}_h}^2
  + \sum_{F \in \mathcal{F}_h} h_F^{-1}\norm[0]{\jump{v_h}}_F^2,
\end{equation*}
with $h_F$ a measure of a face $F \in \mathcal{F}_h$. The average
operator is defined as $\av{w} = \tfrac{1}{2}(w^++w^-)$ on interior
faces and as $\av{w}=0$ on boundary faces. We then note that, since
$\mathcal{T}_h$ is quasi-uniform, there exist uniform constants
$c_{dg,1}, c_{dg,2} > 0$ such that
\begin{equation}
  \label{eq:dgequiv}
  c_{dg,1} \norm[0]{v_h}_{dg} \le \tnorm{(v_h,\av{v_h})}_{v,1} \le c_{dg,2} \norm[0]{v_h}_{dg}
  \qquad \forall v_h \in V_h.
\end{equation}
The first inequality was shown in \cite[eq. (5.8)]{wells2011analysis}
while the second inequality follows from
\begin{align*}
  \norm[0]{\av{v_h} - v_h}_{\partial \mathcal{T}_h}
  &= \sum_{F \in \mathcal{F}_h^{int}} \norm[0]{\tfrac{1}{2}v_h^- - \tfrac{1}{2}v_h^+}_{F}
    + \sum_{F \in \mathcal{F}_h^{int}} \norm[0]{\tfrac{1}{2}v_h^+ - \tfrac{1}{2}v_h^-}_{F}
    + \sum_{F \in \mathcal{F}_h^{bnd}} \norm[0]{v_h}_F
  \lesssim \sum_{F \in \mathcal{F}_h} \norm[0]{\jump{v_h}}_F,
\end{align*}
where $x \lesssim y$ denotes that there exists a uniform constant
$c > 0$ such that $x \le c y$. We will also use $x \gtrsim y$ to
denote $x \ge c y$.

\subsection{Uniform inf-sup condition for $b_h$}
\label{ss:infsupbh}

In this section we prove the following theorem.

\begin{theorem}[inf-sup stability of $b_h$]
  \label{thm:infsupbh}
  There exists a uniform constant $c_3 > 0$, that depends on $\eta$, such that
  \begin{equation}
    \label{eq:infsup3}
    \sup_{0 \neq \boldsymbol{v}_h \in \boldsymbol{V}_h}
    \frac{b_h(v_h, \boldsymbol{q}_h)}{\tnorm{\boldsymbol{v}_h}_v} 
    \geq c_{3} \tnorm{\boldsymbol{q}_h}_q \quad \forall \boldsymbol{q}_h \in \boldsymbol{Q}_h.
  \end{equation}
\end{theorem}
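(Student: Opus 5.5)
The plan is to follow the standard Mardal--Winther argument for sum/intersection spaces. The key tool is the duality identity $(\mathcal{H}_1\cap\mathcal{H}_2)^* = \mathcal{H}_1^*+\mathcal{H}_2^*$. We reduce \cref{eq:infsup3} to two parameter-free inf-sup conditions for $b_h$: one in the pair $(\tnorm{\cdot}_{v,1},\tnorm{\cdot}_{q,0})$ (steady Stokes) and one in the pair $(L^2,\tnorm{\cdot}_{q,1})$ (Darcy-type). We then combine them.

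\textbf{Step 1 (Stokes-type inf-sup).} Using the known HDG Stokes analysis \cite{rhebergen2017analysis,rhebergen2018hybridizable}, there is a uniform $\beta_0>0$ with
\[
\sup_{0\ne\boldsymbol v_h}\frac{b_h(v_h,\boldsymbol q_h)}{\tnorm{\boldsymbol v_h}_{v,1}}\ge \beta_0\tnorm{\boldsymbol q_h}_{q,0}.
\]
The cell pressure is controlled by a continuous $H^1_0$ lifting of $q_h$ interpolated into $V_h$ (BDM-type, with $\bar v_h$ its trace). The face pressure $\bar q_h$ is controlled via face bubbles, with $v_h\cdot n$ matching $h_K\bar q_h$ on faces. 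Scaling gives the $\eta^{-1}h_K$ weight, so the constant depends on $\eta$.

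\textbf{Step 2 (Darcy-type inf-sup).} We show
\[
\sup_{0\ne\boldsymbol v_h}\frac{b_h(v_h,\boldsymbol q_h)}{\|v_h\|_{\mathcal T_h}}\gtrsim\tnorm{\boldsymbol q_h}_{q,1}.
\]
Use the second form $b_h=(\nabla q_h,v_h)_{\mathcal T_h}-\langle q_h-\bar q_h,v_h\cdot n\rangle_{\partial\mathcal T_h}$. Choose $v_h$ elementwise, so that it equals $\nabla q_h$ in a cell-moment sense and its normal trace equals $-\eta h_K^{-1}(q_h-\bar q_h)$ on faces. This is possible with a local BDM-type construction on each $K$, since $\nabla q_h\in[\mathbb P_{k-2}]^d$ and $q_h-\bar q_h\in\mathbb P_k(F)$. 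The choice gives $b_h(v_h,\boldsymbol q_h)=\tnorm{\boldsymbol q_h}_{q,1}^2$. Discrete trace/inverse scaling gives $\|v_h\|_{\mathcal T_h}\lesssim\tnorm{\boldsymbol q_h}_{q,1}$. The face component $\bar v_h$ is set to zero: it does not enter $b_h$, it only enlarges $\tnorm{\cdot}_v$, and it is not needed here.

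\textbf{Step 3 (combination).} This is the main obstacle, because $\tnorm{\boldsymbol v_h}_v$ mixes $\tau\|v_h\|^2$ and $\nu\tnorm{\boldsymbol v_h}_{v,1}^2$ while the two test functions above are different. Step 2's $v_h$ has an uncontrolled $\tnorm{\cdot}_{v,1}$ norm, and Step 1's $v_h$ has an uncontrolled $L^2$ norm relative to $\tau$. Following the abstract argument (e.g.\ \cite{mardal2004uniform}), the inf-sup for $b_h$ in $\tnorm{\cdot}_v$ is equivalent to the dual bound
\[
\tnorm{\boldsymbol q_h}_q\lesssim \|B^*\boldsymbol q_h\|_{\boldsymbol V_h^*},\qquad \|B^*\boldsymbol q_h\|_{\boldsymbol V_h^*}^2\simeq\inf_{\ell=\ell_1+\ell_2}\big(\nu^{-1}\|\ell_1\|_{(v,1)^*}^2+\tau^{-1}\|\ell_2\|_{L^{2*}}^2\big).
\]
The norm $\tnorm{\cdot}_q$ is likewise an infimum over splittings. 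To avoid deriving an explicit splitting of the functional, we construct the splitting of $\boldsymbol q_h$ directly. Using Step 2's construction as an operator, define the discrete gradient-type map $G\boldsymbol q_h:=v_h$, and solve a discrete Darcy/Poisson problem to decompose $\boldsymbol q_h=\boldsymbol q_{1,h}+\boldsymbol q_{2,h}$. In this decomposition $\boldsymbol q_{2,h}$ minimises $\tau^{-1}\tnorm{\cdot}_{q,1}^2+\nu^{-1}\tnorm{\boldsymbol q_h-\cdot}_{q,0}^2$. We then test with a combination $v_h=\theta_1 v_1+\theta_2 v_2$. Here $v_1$ comes from Step 1 applied to $\boldsymbol q_{1,h}$, scaled by $\nu^{-1}$, and $v_2=G\boldsymbol q_{2,h}$ scaled by $\tau^{-1}$. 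The Euler--Lagrange relation of the minimisation makes the cross terms $b_h(v_1,\boldsymbol q_{2,h})$ and $b_h(v_2,\boldsymbol q_{1,h})$ harmless. The risk is that this mixed-term control fails, in which case I would instead prove the Stokes inf-sup of Step 1 in the stronger form with $v_h$ also bounded in $L^2$ by $\tnorm{\boldsymbol q_h}_{q,0}$ after projecting $\boldsymbol q_h$ onto the complement of the Darcy part.

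Simultaneously bounding $\tau\|v_h\|^2+\nu\tnorm{\boldsymbol v_h}_{v,1}^2\lesssim\tnorm{\boldsymbol q_h}_q^2$ then yields \cref{eq:infsup3} with $c_3$ depending only on $\beta_0$, the constant of Step 2, and $\eta$.
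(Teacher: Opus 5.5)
\textbf{There is a genuine gap in Step 3.} Your Steps 1 and 2 are fine: Step 2 is, up to the factor $\tau^{-1}$, the paper's Case~2 test function. The splitting you propose in Step 3 is also the same optimal splitting the paper uses. But Step 3 carries the whole theorem, and you do not resolve it. The paper remarks that the two separate inf-sup conditions do not imply the result abstractly.

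The concluding bound $\tau\norm[0]{v_h}_{\mathcal{T}_h}^2+\nu\tnorm{\boldsymbol{v}_h}_{v,1}^2\lesssim\tnorm{\boldsymbol{q}_h}_q^2$ is only asserted. Moreover, a combination $\theta_1v_1+\theta_2v_2$ inherits both defects you listed yourself.
\begin{itemize}
\item For $\nu\gg\tau h^2$, the term $\nu\tnorm{\cdot}_{v,1}^2$ of $v_2=\tau^{-1}G\boldsymbol{q}_{2,h}$ is controlled only by an inverse inequality, which costs a factor $\nu/(\tau h^2)$. An $h$-independent bound would need broken second derivatives of $q_{2,h}$, i.e.\ elliptic regularity, which is not available on a general polygonal domain.
\item For $\tau\gg\nu$, the term $\tau\norm[0]{v_1}_{\mathcal{T}_h}^2$ with $v_1$ carrying the factor $\nu^{-1}$ requires the unscaled Stokes test function to have $L^2$ norm of order $(\nu/\tau)^{1/2}\tnorm{\boldsymbol{q}_{1,h}}_{q,0}$. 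Your fallback, an $L^2$ bound by $\tnorm{\boldsymbol{q}_h}_{q,0}$, has the wrong scaling; Poincar\'e already gives it.
\end{itemize}
The Euler--Lagrange relation does not rescue this, because it is an identity between inner products. It does handle $b_h(v_2,\boldsymbol{q}_{1,h})=\tau^{-1}(\boldsymbol{q}_{2,h},\boldsymbol{q}_{1,h})_{q,1}=\nu^{-1}\tnorm{\boldsymbol{q}_{1,h}}_{q,0}^2$, since $G$ is an exact Riesz map for $(\cdot,\cdot)_{q,1}$. It says nothing about $b_h(v_1,\boldsymbol{q}_{2,h})$ when $v_1$ is a generic Stokes inf-sup test function.

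\textbf{What the paper does instead.} It works with $\tnorm{\cdot}_{q*}$, built from $(\cdot,\cdot)_{q,0*}$, because for that inner product an exact Riesz-type test function exists. It then splits into the regimes $\nu\lesssim\tau h^2$ and $\tau h^2\lesssim\nu$ and uses a \emph{single} test function in each. The Euler--Lagrange identity then makes the cross term equal to the other energy exactly, e.g.\ $b_h(v_0,\boldsymbol{q}_1^*)=\nu^{-1}(\boldsymbol{q}_0^*,\boldsymbol{q}_1^*)_{q,0*}=E_1$.
\begin{itemize}
\item For $\nu\lesssim\tau h^2$, your Step 2 function suffices, scaled by $\tau^{-1}$ and with $\bar{v}_h=\av{v_h}$. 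An inverse inequality gives $\nu\norm[0]{v_h}_{dg}^2\lesssim\nu h^{-2}\norm[0]{v_h}_{\mathcal{T}_h}^2\lesssim\tau\norm[0]{v_h}_{\mathcal{T}_h}^2$.
\item For $\tau h^2\lesssim\nu$, the paper takes $v_0=-\nu^{-1}\Pi_{\text{div}}\mathcal{B}(q_0^*)$ plus BDM face liftings of $-\nu^{-1}\eta^{-1}h_K(q_0^*-\bar{q}_0^*)$.
\end{itemize}
In the second regime, bounding $\tau\norm[0]{v_0}_{\mathcal{T}_h}^2$ uses three ingredients absent from your plan:
\begin{itemize}
\item the $L^2$-stability of the commuting projection $\Pi_{\text{div}}$;
\item the boundedness of the Bogovski\u{i} operator from $H^{-1}$ to $L^2$;
\item the negative-norm estimate $\norm[0]{q_0^*}_{H^{-1}}\lesssim h\tnorm{\boldsymbol{q}_0^*}_{q,0*}+\nu\tau^{-1}\eta^{1/2}\tnorm{\boldsymbol{q}_1^*}_{q,1}$. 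This follows from the Euler--Lagrange relation, since the $L^2$-part of the optimal splitting is $\nu/\tau$ times a discrete Laplacian of $\boldsymbol{q}_1^*$.
\end{itemize}
The regime condition $\tau h^2\lesssim\nu$ then absorbs both the $h\tnorm{\boldsymbol{q}_0^*}_{q,0*}$ term and the $L^2$ norm of the face liftings.
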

\begin{remark}
  We point out that parameter-independent inf-sup constants of $b_h$
  using the norms
  $(\tau^{1/2}\norm[0]{\cdot}_{\mathcal{T}_h},
  \tau^{-1/2}\tnorm{\cdot}_{q,1})$ and
  $(\nu^{1/2}\tnorm{\cdot}_{v,1}, \nu^{-1/2}\tnorm{\cdot}_{q,0})$,
  proven in \cite[Lemma 4]{kraus2021uniformly} and \cite[Lemma
  1]{rhebergen2018preconditioning} respectively, do not imply
  Theorem~\ref{thm:infsupbh} by a purely functional analytic approach
  of interpolation spaces. This is because the linear map from
  $\boldsymbol{V}_h$ to $\boldsymbol{Q}_h^*$ induced by $b_h$ has a
  kernel space $Z_h \not =\{0\}$, and taking interpolation spaces and
  taking quotient spaces do not commute topologically.
\end{remark}

Before proving \cref{thm:infsupbh} we first present a few useful
results. The local degrees of freedom of the Brezzi--Douglas--Marini
space $BDM_k(K)$ implies the following two lemmas
(cf. \cite[Proposition~2.3.2]{boffi2013mixed} and \cite[Proposition
2.10]{du2019invitation}).
\begin{lemma}[Interior Local Basis]
  \label{lem:interior-local}
  There exists a subspace $B_K([P_{k}(K)]^d) \subset [P_{k}(K)]^d$
  that consists of functions with zero normal trace on $\partial
  K$. Then, for any given $q_h \in P_{k-1}(K)$, there exists a unique
  $v_{int} \in B_K([P_{k}(K)]^d)$ such that
  $(v_{int}, w_h)_K = (\nabla q_h, w_h)_K$ for all
  $w_h \in [P_{k-2}(K)]^d$ and all other local degrees of freedom are
  zero.
\end{lemma}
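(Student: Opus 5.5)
The plan is to obtain the lemma from the unisolvence of the Brezzi--Douglas--Marini degrees of freedom on $[P_k(K)]^d = BDM_k(K)$. No analysis is needed beyond linear algebra on a finite-dimensional space. I would use the version of the degrees of freedom in \cite[Proposition~2.3.2]{boffi2013mixed}. For $v \in [P_k(K)]^d$ these are the face normal moments
\begin{equation*}
  \int_F v\cdot n\, \bar{p} \dif s \quad \forall \bar{p} \in P_k(F),\ F \subset \partial K,
\end{equation*}
together with the interior moments $\int_K v \cdot w \dif x$ for all $w$ in the first-kind N\'ed\'elec space $N_{k-1}(K) = [P_{k-2}(K)]^d \oplus \{\text{homogeneous part}\}$. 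Since $[P_{k-2}(K)]^d \subset N_{k-1}(K)$, I would choose a basis of $N_{k-1}(K)$ that begins with a basis of $[P_{k-2}(K)]^d$ and is completed by finitely many further functions $w_j$. The interior degrees of freedom then split into moments against $[P_{k-2}(K)]^d$ and the "other" interior moments against the $w_j$. This gives a unisolvent set of functionals, equivalently the local basis in \cite[Proposition~2.10]{du2019invitation}.

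Next I would define $B_K([P_k(K)]^d)$ as the set of $v \in [P_k(K)]^d$ whose face normal moments all vanish. Because $v\cdot n|_F \in P_k(F)$ for a polynomial $v$ of degree $k$ on a flat face, vanishing of its moments against all of $P_k(F)$ forces $v\cdot n = 0$ on each $F$. Hence $B_K$ consists of functions with zero normal trace on $\partial K$, which is the first claim. By unisolvence, $B_K$ is isomorphic, via the interior degrees of freedom, to the dual of $N_{k-1}(K)$.

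For the second claim, take $q_h \in P_{k-1}(K)$ and note that $\nabla q_h \in [P_{k-2}(K)]^d$. I would prescribe the following values of the degrees of freedom:
\begin{itemize}
  \item all face moments are zero;
  \item $\int_K v\cdot w \dif x = (\nabla q_h, w)_K$ for each basis function $w$ of $[P_{k-2}(K)]^d$;
  \item the remaining interior moments against the $w_j$ are zero.
\end{itemize}
These prescribed values define a linear functional on the full set of degrees of freedom. By unisolvence there is exactly one $v_{int} \in [P_k(K)]^d$ realizing them. It lies in $B_K$ because its face moments vanish. By linearity in $w$, $(v_{int}, w_h)_K = (\nabla q_h, w_h)_K$ holds for all $w_h \in [P_{k-2}(K)]^d$. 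Uniqueness follows because the difference of two such functions has all degrees of freedom equal to zero and therefore vanishes.

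The only delicate step is ensuring that the moments against $[P_{k-2}(K)]^d$ really form part of a unisolvent set, so that "all other local degrees of freedom" is meaningful. I would settle this by the inclusion $[P_{k-2}(K)]^d \subset N_{k-1}(K)$ and the basis completion described above. For $k=1$ the space $[P_{k-2}(K)]^d$ is trivial and $\nabla q_h = 0$, so $v_{int}=0$ and the statement holds trivially.
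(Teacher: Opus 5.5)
Your argument is correct and is essentially the paper's own justification. The paper only invokes the unisolvent local degrees of freedom of $BDM_k(K)$ (cf.\ \cite[Proposition~2.3.2]{boffi2013mixed}, \cite[Proposition~2.10]{du2019invitation}), and you spell out what it leaves implicit: vanishing normal moments against $P_k(F)$ force a zero normal trace, and completing a basis of $[P_{k-2}(K)]^d$ to one of the N\'ed\'elec interior test space keeps the degrees of freedom unisolvent and makes ``all other local degrees of freedom'' meaningful (the paper labels that space $\mathcal{N}_{k-2}$ by its complete polynomial subspace; it is the same space you call $N_{k-1}$).
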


\begin{lemma}[Orthogonal Lifting]
  \label{lem:bdm-lifting}
  Let $\bar{r}_h \in P_k(F)$ be a given polynomial on face
  $F \subset \partial K$. There exists a unique local lifting operator
  $L_F(\bar{r}_h) \in [P_k(K)]^d$ such that its normal trace on $F$ is
  exactly $\bar{r}_h$, its normal trace on $\partial K \backslash F$
  is zero, and all interior degrees of freedom are zero. This lifting
  operator has the following properties:
  \begin{itemize}
  \item[{\rm (i)}] $(L_F(\bar{r}_h), \nabla q_h)_K \equiv 0$ for any
    $q_h \in P_{k-1}(K)$ since
    $\nabla q_h \subset [P_{k-2}(K)]^d \subset \mathcal{N}_{k-2}(K)$,
    where $\mathcal{N}_{k-2}$ is the N\'ed\'elec space;
  \item[{\rm (ii)}]
    $\norm[0]{L_F(\bar{r}_h)}_K \lesssim h_K^{\frac{1}{2}}
    \norm[0]{\bar{r}_h}_{F}$ and
    $\norm[0]{\nabla L_F(\bar{r}_h)}_K \lesssim h_K^{-\frac{1}{2}}
    \norm[0]{\bar{r}_h}_{F}$.
  \end{itemize}
\end{lemma}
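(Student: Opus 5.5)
The plan is to read everything off the unisolvence of the $BDM_k(K)$ degrees of freedom and then to use a Piola scaling argument. We have $BDM_k(K) = [P_k(K)]^d$. A function in this space is uniquely determined by two groups of degrees of freedom: the normal-trace moments $\int_{F'} v\cdot n\,\bar{s}\dif s$ for $\bar{s} \in P_k(F')$ on each face $F' \subset \partial K$, and the interior moments $\int_K v\cdot w \dif x$ for $w$ in the appropriate N\'ed\'elec space $\mathcal{N}_{k-2}(K)$ (in the indexing of the statement), see \cite[Proposition~2.3.2]{boffi2013mixed}.

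\textbf{Existence and uniqueness.} I define $L_F(\bar{r}_h)$ as the unique element of $[P_k(K)]^d$ whose degrees of freedom are as follows:
\begin{itemize}
\item on $F$, the normal moments equal $\int_F \bar{r}_h \bar{s}\dif s$ for all $\bar{s} \in P_k(F)$;
\item the normal moments on every other face $F' \neq F$ vanish;
\item all interior moments vanish.
\end{itemize}
Unisolvence gives existence and uniqueness. Since $v\cdot n|_{F'} \in P_k(F')$ for $v \in [P_k(K)]^d$, vanishing moments against all of $P_k(F')$ force $v\cdot n = 0$ on $F'$. Likewise, matching the moments on $F$ forces $v \cdot n = \bar{r}_h$ on $F$. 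So the normal-trace requirements hold exactly, not only in a moment sense. Uniqueness also follows from unisolvence, because any two such functions share all degrees of freedom.

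\textbf{Property (i).} For $q_h \in P_{k-1}(K)$ we have $\nabla q_h \in [P_{k-2}(K)]^d \subset \mathcal{N}_{k-2}(K)$. Hence $(L_F(\bar{r}_h), \nabla q_h)_K$ is a linear combination of interior degrees of freedom of $L_F(\bar{r}_h)$, and these are all zero by construction. If one prefers not to rely on the precise N\'ed\'elec indexing, the same conclusion follows by integrating by parts:
\begin{equation*}
(v,\nabla q_h)_K = -(\nabla\cdot v, q_h)_K + \langle v\cdot n, q_h\rangle_{\partial K}.
\end{equation*}
Both terms are then controlled by the vanishing interior moments and by the boundary moments. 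The cleanest route, however, is simply the inclusion of $\nabla P_{k-1}(K)$ in the interior test space.

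\textbf{Property (ii).} I would use a scaling argument with the contravariant Piola map $v = |\det B_K|^{-1} B_K \hat{v}\circ F_K^{-1}$, where $F_K(\hat{x}) = B_K\hat{x} + b_K$ maps the reference simplex $\hat{K}$ onto $K$. The Piola map preserves normal moments: $\int_F v\cdot n\,\bar{s}\dif s = \int_{\hat{F}} \hat{v}\cdot\hat{n}\,\hat{s}\dif\hat{s}$. Therefore $\widehat{L_F(\bar{r}_h)} = \hat{L}_{\hat{F}}(\hat{r})$, where $\hat{r} = (|F|/|\hat{F}|)\,\bar{r}_h\circ F_K$.

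On $\hat{K}$ the map $\hat{r} \mapsto \hat{L}_{\hat{F}}(\hat{r})$ is linear on a finite-dimensional space. Hence $\norm[0]{\hat{L}_{\hat F}\hat{r}}_{H^1(\hat{K})} \le \hat{C}\norm[0]{\hat{r}}_{\hat{F}}$ for a constant $\hat{C}$, and only finitely many reference faces occur. Scaling back with shape regularity ($\norm[0]{B_K} \sim h_K$, $|\det B_K| \sim h_K^d$) gives:
\begin{itemize}
\item $\norm[0]{v}_K \sim h_K^{1-d/2}\norm[0]{\hat{v}}_{\hat{K}}$;
\item $\norm[0]{\nabla v}_K \sim h_K^{-d/2}\norm[0]{\hat{\nabla}\hat{v}}_{\hat{K}}$;
\item $\norm[0]{\bar{r}_h}_F \sim h_K^{1-d}h_K^{(d-1)/2}\norm[0]{\hat{r}}_{\hat{F}} = h_K^{(1-d)/2}\norm[0]{\hat{r}}_{\hat{F}}$.
\end{itemize}
Taking ratios yields $h_K^{1/2}$ for the $L^2$ bound and $h_K^{-1/2}$ for the gradient bound.

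The main obstacle is keeping this bookkeeping honest. One must check that the Piola-transformed lifting is exactly the reference lifting, which relies on the degrees of freedom being Piola-invariant; face moments are, and interior N\'ed\'elec moments are under the matching covariant transform of the test functions. One must also check that all hidden constants depend only on $k$, $d$, and the shape-regularity constant, so that they are uniform as the statement requires.
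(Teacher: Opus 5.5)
Your proposal is correct and follows the paper's route: the paper gives no separate proof and simply appeals to unisolvence of the $BDM_k(K)$ degrees of freedom (citing Boffi--Brezzi--Fortin and Du--Sayas), leaving (ii) to the standard Piola scaling argument that you write out in full. One small caution: drop the integration-by-parts aside in (i). Its boundary term $\langle v\cdot n, q_h\rangle_{\partial K} = \langle \bar{r}_h, q_h\rangle_F$ does not vanish in general, so that identity cannot give (i) on its own; the inclusion $\nabla P_{k-1}(K)\subset \mathcal{N}_{k-2}(K)$ is the actual argument.
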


\begin{lemma}[Lifting jump]
  \label{lem:liftjump}
  Let $\bar{r}_h \in P_k(F)$ be a given polynomial on face
  $F \subset \partial K$ and let $L_F(\bar{r}_h) \in [P_k(K)]^d$ be
  the local lifting operator defined in \cref{lem:bdm-lifting}. Then
  \begin{align*}
    \norm[0]{\jump{L_F(\bar{r}_h)}}_F^2
    &\lesssim \norm[0]{\bar{r}_h^+}_F^2 + \norm[0]{\bar{r}_h^-}_F^2
    && \text{if $F \in \mathcal{F}_h^{int}$,}
    \\
    \norm[0]{\jump{L_F(\bar{r}_h)}}_F^2
    &\lesssim \norm[0]{\bar{r}_h}_F^2
    && \text{if $F \in \mathcal{F}_h^{bnd}$.}
  \end{align*}
\end{lemma}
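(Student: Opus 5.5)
The plan is to work face by face. The jump of $L_F(\bar{r}_h)$ across $F$ only involves the two traces of the liftings on the cells sharing $F$, and each trace can be controlled by the corresponding volume norm through discrete trace and inverse inequalities.

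\emph{Interpreting the statement.} For an interior face $F$ shared by $K^+$ and $K^-$, I read $L_F(\bar{r}_h)$ as the piecewise polynomial equal to $L_F^{K^\pm}(\bar{r}_h^\pm)$ on $K^\pm$. Here $\bar{r}_h^\pm$ are the normal-trace data prescribed from each side. The lifting is extended by zero to the other cells, so on $F$ we have $\jump{L_F(\bar{r}_h)} = L_F^{K^+}(\bar{r}_h^+)|_F - L_F^{K^-}(\bar{r}_h^-)|_F$.

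\emph{Main estimate.} By the triangle inequality and $(a+b)^2\le 2a^2+2b^2$,
\begin{equation*}
\norm[0]{\jump{L_F(\bar{r}_h)}}_F^2 \le 2\norm[0]{L_F^{K^+}(\bar{r}_h^+)}_F^2 + 2\norm[0]{L_F^{K^-}(\bar{r}_h^-)}_F^2 .
\end{equation*}
Each term is a polynomial of degree $k$ on a shape-regular simplex, so the discrete trace inequality applies:
\begin{equation*}
\norm[0]{w_h}_F^2 \lesssim h_K^{-1}\norm[0]{w_h}_K^2 \qquad \forall w_h\in[P_k(K)]^d .
\end{equation*}
Combining this with \cref{lem:bdm-lifting}(ii), $\norm[0]{L_F(\bar{r}_h^\pm)}_{K^\pm}^2 \lesssim h_{K^\pm}\norm[0]{\bar{r}_h^\pm}_F^2$, gives $\norm[0]{L_F^{K^\pm}(\bar{r}_h^\pm)}_F^2 \lesssim \norm[0]{\bar{r}_h^\pm}_F^2$. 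The factors $h_K^{-1}$ and $h_K$ cancel, so no quasi-uniformity is needed beyond shape regularity.

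\emph{Boundary faces.} On a boundary face $\jump{w}=w$, and only one cell is involved. The same two inequalities give $\norm[0]{\jump{L_F(\bar{r}_h)}}_F^2 \lesssim \norm[0]{\bar{r}_h}_F^2$ directly.

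\emph{Main obstacle.} The only delicate point is \cref{lem:bdm-lifting}(ii), which I take as given. It relies on a scaling argument: the Piola transformation maps to a reference element, where the lifting of normal-trace data is a fixed finite-dimensional linear map with a bounded norm. The constants depend only on $k$, $d$, and shape regularity, which makes the hidden constant in the present lemma uniform as well.
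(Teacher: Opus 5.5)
Your argument is correct, but it follows a different route from the paper's proof.

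\textbf{The paper's route.} The paper also starts from the triangle inequality. It then splits the trace of $L_F^\pm(\bar{r}_h^\pm)$ on $F$ into a normal part $(L_F\cdot n)n$, whose norm is exactly $\norm[0]{\bar{r}_h^\pm}_F$, and a tangential part $(L_F)^t$. It asserts that the tangential part is dominated by the normal part because the lifting is determined solely by its normal data on $F$. That domination claim is not proved there. Making it rigorous needs a reference-element plus Piola-scaling argument of the same kind that underlies \cref{lem:bdm-lifting}(ii).

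\textbf{Your route.} You reduce the lemma to two ingredients: the already-stated volume bound $\norm[0]{L_F(\bar{r}_h)}_K \lesssim h_K^{1/2}\norm[0]{\bar{r}_h}_F$, and the standard discrete trace inequality. No new unjustified step enters. The factors $h_K^{\pm 1}$ cancel cell by cell using only shape regularity.

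\textbf{What your route buys.} It bounds $\norm[0]{L_F(\bar{r}_h)}_{F'}$ on every face $F'\subset\partial K$, not only on $F$. This matters where the lemma is used in the proof of \cref{thm:infsupbh}, where $v_l|_K=\sum_{F\subset\partial K}L_F(\cdot)$. The liftings attached to the other faces of $K^\pm$ have zero normal trace on $F$ but in general nonzero tangential trace there, so they contribute to $\jump{v_l}$ on $F$. A normal-dominates-tangential argument on $F$ cannot control these terms, since their normal component on $F$ vanishes. Your trace-plus-volume argument handles them directly, with a constant that also depends on the bounded number of faces per cell.
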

\begin{proof}
  Since the lifting $L_F(\bar{r}_h)$ is determined solely by its
  normal component on the face $F$
  ($L_F(\bar{r}_h) \cdot n = \bar{r}_h$, see \cref{lem:bdm-lifting}),
  the normal component of $L_F(\bar{r}_h)$ dominates
  $(L_F(\bar{r}_h))^t$, the tangential component of $L_F(\bar{r}_h)$,
  on the face $F$. Therefore, on an interior face,
  \begin{align*}
    \norm[0]{\jump{L_F(\bar{r}_h)}}_F^2
    \lesssim & \norm[0]{L_F^+(\bar{r}_h^+)}_F^2 + \norm[0]{L_F^-(\bar{r}_h^-)}_F^2
    \\
    \lesssim & \norm[0]{(L_F^+(\bar{r}_h^+)\cdot n^+)n^+}_F^2 + \norm[0]{(L_F^+(\bar{r}_h^-))^t}_F^2
               + \norm[0]{(L_F^-(\bar{r}_h^+)\cdot n^-)n^-}_F^2 + \norm[0]{(L_F^-(\bar{r}_h^+))^t}_F^2
    \\
    \lesssim & \norm[0]{\bar{r}_h^+}_F^2 + \norm[0]{\bar{r}_h^-}_F^2.
  \end{align*}
  Similar arguments hold on a boundary face.
\end{proof}

The following lemma considers a splitting of
$\boldsymbol{q}_h \in \boldsymbol{Q}_h$.

\begin{lemma}
  \label{lem:infimum}
  For $\boldsymbol{q}_h \in \boldsymbol{Q}_h$ there exist
  $\boldsymbol{q}_{0}^*, \boldsymbol{q}_{1}^* \in \boldsymbol{Q}_h$
  such that
  $\boldsymbol{q}_{0}^* + \boldsymbol{q}_{1}^* = \boldsymbol{q}_h$ and
  \begin{subequations}
    \begin{align}
      \label{eq:infimum}
      \tnorm{\boldsymbol{q}_h}_{q*}^2
      &= \nu^{-1} \tnorm{\boldsymbol{q}_0^*}_{q,0*}^2
      + \tau^{-1} \tnorm{\boldsymbol{q}_1^*}_{q,1}^2,
      \\
      \label{eq:optimality}
      \nu^{-1} (\boldsymbol{q}_{0}^*, \boldsymbol{r}_h )_{q,0*}
      &= \tau^{-1} (\boldsymbol{q}_{1}^*, \boldsymbol{r}_h )_{q,1}
        \qquad \forall \boldsymbol{r}_h \in \boldsymbol{Q}_h.
    \end{align}    
  \end{subequations}
\end{lemma}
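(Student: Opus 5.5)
The plan is to treat the definition of $\tnorm{\cdot}_{q*}$ as a finite-dimensional minimization problem. Fix $\boldsymbol{q}_h \in \boldsymbol{Q}_h$ and, taking $\boldsymbol{q}_h=\boldsymbol{p}_h$ in \cref{eq:inner-p-bstar}, consider the functional
\begin{equation*}
  J(\boldsymbol{q}_{0}) := \nu^{-1}\tnorm{\boldsymbol{q}_0}_{q,0*}^2 + \tau^{-1}\tnorm{\boldsymbol{q}_h-\boldsymbol{q}_0}_{q,1}^2,
  \qquad \boldsymbol{q}_0 \in \boldsymbol{Q}_h,
\end{equation*}
so that $\tnorm{\boldsymbol{q}_h}_{q*}^2 = \inf_{\boldsymbol{q}_0\in\boldsymbol{Q}_h} J(\boldsymbol{q}_0)$. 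Setting $\boldsymbol{q}_1 := \boldsymbol{q}_h - \boldsymbol{q}_0$ parametrizes every admissible splitting. This is a quadratic functional on the finite-dimensional space $\boldsymbol{Q}_h$.

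The first step is existence of a minimizer. $\tnorm{\cdot}_{q,0*}$ is a norm on $\boldsymbol{Q}_h$: if it vanishes then $q_h=0$, hence $\bar q_h = 0$ on every face. Since $\tnorm{\cdot}_{q,1}^2 \ge 0$, the functional is coercive, $J(\boldsymbol{q}_0) \ge \nu^{-1}\tnorm{\boldsymbol{q}_0}_{q,0*}^2$. $J$ is also continuous, so its sublevel sets are closed and bounded, hence compact, and a minimizer $\boldsymbol{q}_0^*$ exists. In fact $J$ is strictly convex, being a sum of a strictly convex term and a convex term, so the minimizer is unique, although uniqueness is not needed. Defining $\boldsymbol{q}_1^* := \boldsymbol{q}_h - \boldsymbol{q}_0^*$ then gives \cref{eq:infimum} immediately.

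The second step derives \cref{eq:optimality} from the first-order optimality condition. For any $\boldsymbol{r}_h \in \boldsymbol{Q}_h$ and $t\in\mathbb{R}$, the function $t\mapsto J(\boldsymbol{q}_0^*+t\boldsymbol{r}_h)$ is minimized at $t=0$. Its derivative at $0$ is
\begin{equation*}
  2\nu^{-1}(\boldsymbol{q}_0^*,\boldsymbol{r}_h)_{q,0*} - 2\tau^{-1}(\boldsymbol{q}_1^*,\boldsymbol{r}_h)_{q,1},
\end{equation*}
and it must vanish, which is exactly \cref{eq:optimality}.

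The only point needing any care is confirming that $\tnorm{\cdot}_{q,0*}$ is definite on all of $\boldsymbol{Q}_h$, including the face component, since coercivity rests on it. $\tnorm{\cdot}_{q,1}$ alone is only a seminorm, because it vanishes on constants; that causes no difficulty here.
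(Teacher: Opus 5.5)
Your proof is correct and follows the paper's argument: you minimize the splitting functional over the finite-dimensional space $\boldsymbol{Q}_h$, then set the directional derivative to zero to obtain \cref{eq:optimality}. You parametrize by $\boldsymbol{q}_0$ where the paper uses $\boldsymbol{\rho}_h=\boldsymbol{q}_1$, and you usefully supply the coercivity argument (definiteness of $\tnorm{\cdot}_{q,0*}$) that the paper's bare appeal to finite dimensionality leaves implicit. As a harmless aside, since $Q_h \subset L_0^2(\Omega)$ the pairs $(c,c)$ with $c \neq 0$ are not in $\boldsymbol{Q}_h$, so $\tnorm{\cdot}_{q,1}$ is actually a norm there as well, not merely a seminorm.
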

\begin{proof}
  Let
  $F(\boldsymbol{\rho}_h):= \nu^{-1}
  \tnorm{\boldsymbol{q}_h-\boldsymbol{\rho}_h }_{q,0*}^2 + \tau^{-1}
  \tnorm{\boldsymbol{\rho}_h}_{q,1}^2$. The existence of a pair attaining
  the infimum of $F(\cdot)$ is because $\boldsymbol{Q}_h$ is a finite
  dimensional vector space.

  Next, by the optimality condition $DF(\boldsymbol{q}_{1}^*) = 0$ at
  $\boldsymbol{\rho}_h = \boldsymbol{q}_{1}^*$, where $DF$ denotes the
  G\^{a}teaux derivative of $F$. Writing out the norms, we have for
  all $\boldsymbol{r}_h \in \boldsymbol{Q}_h$
  \begin{equation*}
    0
    = DF(\boldsymbol{q}_1^*)      
    = \lim_{\varepsilon \to 0} \frac{F(\boldsymbol{q}_1^*
      + \varepsilon \boldsymbol{r}_h) - F(\boldsymbol{q}_1^*)}{\varepsilon}
    = - 2\nu^{-1} ( \boldsymbol{r}_h, \boldsymbol{q}_0^* )_{q,0*}
    + 2\tau^{-1} ( \boldsymbol{r}_h, \boldsymbol{q}_1^* )_{q,1},
  \end{equation*}
  proving \cref{eq:optimality}.
\end{proof}

\begin{lemma}
  \label{lem:q0starHmin1}
  Let $\boldsymbol{q}_0^*, \boldsymbol{q}_1^* \in \boldsymbol{Q}_h$ be
  as defined in \cref{lem:infimum}. Then
  \begin{equation*}
    \norm[0]{q_0^*}_{H^{-1}}
    := \sup_{r \in H_0^1(\Omega)} \frac{(q_0^*,r)_{\mathcal{T}_h}}{\norm[0]{r}_{H^1_0(\Omega)}}
    \lesssim h \tnorm{\boldsymbol{q}_0^*}_{q,0*} + \nu \tau^{-1} \eta^{1/2} \tnorm{\boldsymbol{q}_1^*}_{q,1}.
  \end{equation*}
\end{lemma}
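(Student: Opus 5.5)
Fix $r \in H_0^1(\Omega)$. The plan is to project $r$ onto the discrete spaces, test the optimality condition \cref{eq:optimality} with this projection, and use the resulting identity to convert $(q_0^*, r)$ into terms controlled by the $q,1$-norm of $\boldsymbol{q}_1^*$.

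\textbf{Step 1 (projection).} Let $\Pi r$ be the $L^2$ projection of $r$ onto the cellwise $\mathbb{P}_{k-1}$ functions. Subtracting the mean is harmless because $q_0^*$ has zero mean, so we may assume $\Pi r \in Q_h$. Let $\bar{\Pi} r$ be the face $L^2$ projection, and set $\boldsymbol{r}_h := (\Pi r, \bar{\Pi} r)$. Then
\[
(q_0^*, r)_{\mathcal{T}_h} = (q_0^*, \Pi r)_{\mathcal{T}_h}.
\]

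\textbf{Step 2 (optimality identity).} By \cref{eq:optimality},
\begin{multline*}
(q_0^*, \Pi r)_{\mathcal{T}_h}
= \nu \tau^{-1} (\boldsymbol{q}_1^*, \boldsymbol{r}_h)_{q,1} \\
- \eta^{-1}\langle h_K (q_0^*-\bar{q}_0^*), \Pi r - \bar{\Pi} r\rangle_{\partial\mathcal{T}_h}.
\end{multline*}

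\textbf{Step 3 (boundary term).} On each face, $\Pi r - \bar{\Pi} r$ is bounded by $(\Pi r - r)|_{\partial K}$ plus $(r - \bar{\Pi} r)$. Standard trace and approximation estimates give
\[
\|\Pi r - \bar{\Pi} r\|_{\partial K} \lesssim h_K^{1/2}\|\nabla r\|_K.
\]
Hence the boundary term is bounded by
\[
\eta^{-1/2}\,\|\eta^{-1/2} h_K^{1/2}(q_0^*-\bar q_0^*)\|_{\partial\mathcal{T}_h}\; h\,|r|_1 \;\lesssim\; h\,\tnorm{\boldsymbol{q}_0^*}_{q,0*}\,\|r\|_1,
\]
using $\eta > 1$.

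\textbf{Step 4 (volume term).} By Cauchy--Schwarz,
\[
(\boldsymbol{q}_1^*, \boldsymbol{r}_h)_{q,1} \le \tnorm{\boldsymbol{q}_1^*}_{q,1}\tnorm{\boldsymbol{r}_h}_{q,1},
\]
and we need $\tnorm{\boldsymbol{r}_h}_{q,1} \lesssim \eta^{1/2}\|r\|_1$. The gradient part satisfies $\|\nabla \Pi r\|_K \lesssim \|\nabla r\|_K$ by $H^1$-stability of the local $L^2$ projection. The face part is
\[
\eta\, h_K^{-1}\|\Pi r - \bar{\Pi} r\|^2_{\partial K} \lesssim \eta\,\|\nabla r\|_K^2,
\]
again by Step 3's bound. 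This gives the factor $\eta^{1/2}$.

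\textbf{Conclusion.} Divide by $\|r\|_{H_0^1}$ and take the supremum over $r$. The main technical point is the face estimate for $\Pi r - \bar{\Pi} r$, which needs care with the traces of $r$ on faces. The clean version uses
\[
\|\Pi r - \bar{\Pi} r\|_F \le \|\Pi r - r\|_F + \|r - \bar{\Pi} r\|_F \le 2\|\Pi r - r\|_F,
\]
together with the trace inequality and approximation of $\Pi$.
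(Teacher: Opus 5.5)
Your proof is correct and is essentially the paper's argument: project $r$ onto $Q_h$, test the optimality condition \cref{eq:optimality} with the projected pair $\boldsymbol{r}_h$, then bound $\tnorm{\boldsymbol{r}_h}_{q,1} \lesssim \eta^{1/2}|r|_{H^1}$ and bound the leftover face term by $h\,|r|_{H^1}\tnorm{\boldsymbol{q}_0^*}_{q,0*}$. The one difference is that your face component is $\bar\Pi r$ (the face $L^2$ projection) rather than the paper's $\av{\Pi_Q r - (\Pi_Q r)_{\text{avg}}}$, which keeps the face estimate local to each element via best approximation and handles boundary faces cleanly; you should state explicitly that the mean is removed from both components (equivalently, replace $r$ by $r$ minus its mean, which changes neither $(q_0^*,r)_{\mathcal{T}_h}$ nor $|r|_{H^1}$), because shifting only the cell part would leave a constant in the face terms.
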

\begin{proof}
  Let $\Pi_Q$ be the cell-wise $L^2$ projection into $Q_h$. Then,
  using \cref{eq:optimality},
  \begin{align*}
    (q_0^*, r)_{\mathcal{T}_h}
    =& (q_0^*, \Pi_{Q} r)_{\mathcal{T}_h}
       = (q_0^*, \Pi_{Q} r - (\Pi_{Q} r)_{\text{avg}} )_{\mathcal{T}_h}
    \\
    =& \del[1]{ (q_0^*, \bar{q}_0^*), (\Pi_{Q} r - (\Pi_{Q} r)_{\text{avg}}, \av{\Pi_{Q} r - (\Pi_{Q} r)_{\text{avg}}}) }_{q,0*} 
    \\
     & - \eta^{-1} \langle h_K (q_0^* - \bar{q}_0^*), \Pi_Q r - \av{\Pi_Q r} \rangle_{\partial \mathcal{T}_h}
    \\
    =& \frac{\nu}{\tau} \del[1]{ (q_1^*, \bar{q}_1^*), (\Pi_{Q} r - (\Pi_{Q} r)_{\text{avg}}, \av{\Pi_{Q} r - (\Pi_{Q} r)_{\text{avg}}}) }_{q,1} 
    \\
     & - \eta^{-1} \langle h_K( q_0^* - \bar{q}_0^*), \Pi_Q r - \av{\Pi_Q r} \rangle_{\partial \mathcal{T}_h}.
  \end{align*}
  Using the Cauchy--Schwarz inequality,
  \begin{equation}
    \label{eq:q0starrn-n}
    \begin{split}
      (q_0^*, r)_{\mathcal{T}_h}
      \le
      & \frac{\nu}{\tau} \tnorm{\boldsymbol{q}_1^*}_{q,1}
        \tnorm{(\Pi_{Q} r - (\Pi_{Q} r)_{\text{avg}}, \av{\Pi_{Q} r - (\Pi_{Q} r)_{\text{avg}}}) }_{q,1} 
      \\
      & + \eta^{-1}\langle h_K( q_0^* - \bar{q}_0^*), \Pi_Q r - \av{\Pi_Q r} \rangle_{\partial \mathcal{T}_h}.      
    \end{split}
  \end{equation}
  Consider the first term on the right hand side of
  \cref{eq:q0starrn-n} and note that
  \begin{align*}
    \tnorm{(\Pi_{Q} r - (\Pi_{Q} r)_{\text{avg}}, \av{\Pi_{Q} r - (\Pi_{Q} r)_{\text{avg}}})}_{q,1}^2
    =& \norm[0]{\nabla (\Pi_Qr)}_{\mathcal{T}_h}^2
    \\
     &+ \eta \norm[0]{ h_K^{-1/2}(\Pi_{Q} r - \av{\Pi_{Q} r} )}_{\partial\mathcal{T}_h}^2.
  \end{align*}
  Note that
  $\norm[0]{\nabla (\Pi_Qr)}_{\mathcal{T}_h} \lesssim |r|_{H^1(K)}$
  (c.f. \cite[Lemma 1.58]{di2011mathematical}). Furthermore, on an
  interior face $F$ we have
  \begin{equation*}
    (\Pi_Q r - \av{\Pi_Q r})|_{\partial K^+ \cap F} 
    = \tfrac{1}{2} ( (\Pi_Q r)^+ - (\Pi_Q r)^-)
    = \tfrac{1}{2} ( (\Pi_Q r)^+ -r + (r - (\Pi_Q r)^-) ),
  \end{equation*}
  so that, using quasi-uniformity of the mesh and \cite[Lemma
  1.59]{di2011mathematical},
  \begin{equation*}
    \begin{split}
      h_{K^+}^{-1/2}\norm[0]{\Pi_Q r - \av{\Pi_Q r}}_{\partial K^+ \cap F}
      &\le \tfrac{1}{2} h_{K^+}^{-1/2} \del[1]{
        \norm[0]{(\Pi_Q r)^+ - r}_{\partial K^+ \cap F}
        + \norm[0]{(\Pi_Q r)^- - r}_{\partial K^- \cap F} }
      \\
      &\lesssim |r|_{H^1(K^+)} + |r|_{H^1(K^-)}.
    \end{split}
  \end{equation*}
  A similar result holds on boundary faces. The first term on the
  right hand side of \cref{eq:q0starrn-n} is therefore bounded as
  \begin{equation}
    \label{eq:q0starr-rhs1n-n}
    \frac{\nu}{\tau} \tnorm{\boldsymbol{q}_1^*}_{q,1}
    \tnorm{(\Pi_{Q} r - (\Pi_{Q} r)_{\text{avg}}, \av{\Pi_{Q} r - (\Pi_{Q} r)_{\text{avg}}}) }_{q,1}
    \lesssim \frac{\nu}{\tau} \tnorm{\boldsymbol{q}_1^*}_{q,1} \eta^{1/2} \sum_{K \in \mathcal{T}_h}|r|_{H^1(K)}.
  \end{equation}
  For the second term on the right hand side of \cref{eq:q0starrn-n}
  we find:
  \begin{equation}
    \label{eq:q0starr-rhs2n-n}
    \begin{split}
      \eta^{-1}\langle & h_K( q_0^* - \bar{q}_0^*), \Pi_Q r - \av{\Pi_Q r} \rangle_{\partial \mathcal{T}_h}
      \\
                       &\le \eta^{-1}\norm[0]{h_K^{1/2}(q_0^* - \bar{q}_0^*)}_{\partial \mathcal{T}_h}
                         \norm[0]{h_K^{1/2} (\Pi_Q r - \av{\Pi_Q r})}_{\partial \mathcal{T}_h}
      \\
                       &\lesssim h \eta^{-1}\norm[0]{h_K^{1/2}(q_0^* - \bar{q}_0^*)}_{\partial \mathcal{T}_h} \sum_{K \in \mathcal{T}_h} |r|_{H^1(K)}.
    \end{split}
  \end{equation}
  Combining \cref{eq:q0starrn-n,eq:q0starr-rhs1n-n,eq:q0starr-rhs2n-n}
  with the definition of $\norm[0]{q_0^*}_{H^{-1}}$ and using that
  $\eta > 1$, the result follows.
\end{proof}

It is known that there exists a bounded linear operator
$\Pi_{\text{div}} : H(\text{div}, \Omega) \to V_h \cap
H(\text{div},\Omega)$ which is a projection on
$V_h \cap H(\text{div},\Omega)$ such that $\Pi_{\text{div}}$ is
locally $L^2$-bounded and it gives a bounded cochain projection in the
last two spaces of the discrete de Rham complex
(cf. \cite{Arnold-Guzman:2021,Ern-Gudi:2022,Gawlik:2021}). In
particular, if $w \in H(\text{div}, \Omega)$ and
$\nabla \cdot w \in P_{k-1}(\mathcal{T}_h)$, then
$\nabla \cdot \Pi_{\text{div}}w = \nabla \cdot w$. In addition to the
immediate consequence that
$\norm[0]{\Pi_{\text{div}} w}_{L^2(\Omega)} \lesssim
\norm[0]{w}_{L^2(\Omega)}$ we also have the following result.

\begin{lemma}
  \label{lem:pidivwH1}
  Let $w \in [H^1(\Omega)]^d$. Then
  $\norm[0]{\Pi_{\text{div}} w}_{dg} \lesssim
  \norm[0]{w}_{H^1(\Omega)}$.
\end{lemma}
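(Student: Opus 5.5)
Since $\Pi_{\text{div}} w \in V_h \cap H(\text{div},\Omega)$, its normal component is continuous across interior faces. The jump therefore has only a tangential part, and the dg norm splits as
\begin{equation*}
  \norm[0]{\Pi_{\text{div}} w}_{dg}^2
  = \norm[0]{\nabla \Pi_{\text{div}} w}_{\mathcal{T}_h}^2
  + \sum_{F \in \mathcal{F}_h} h_F^{-1}\norm[0]{\jump{\Pi_{\text{div}} w}}_F^2 .
\end{equation*}
On boundary faces I would either assume $w \in [H^1_0(\Omega)]^d$, which is the case of interest for the inf-sup proof, or simply bound the full trace. Throughout, the key device is to subtract a local polynomial approximation of $w$ and exploit that $\Pi_{\text{div}}$ reproduces polynomials in $V_h\cap H(\text{div},\Omega)$.

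For each cell $K$, let $p_K$ be the $L^2$ projection of $w$ onto $[\mathbb{P}_0(\omega_K)]^d$, where $\omega_K$ is the patch of cells touching $K$. I would use the version of $\Pi_{\text{div}}$ from \cite{Arnold-Guzman:2021,Ern-Gudi:2022,Gawlik:2021} that is a local projection whose restriction to $K$ depends only on $w|_{\omega_K}$. Then $(\Pi_{\text{div}} w)|_K - p_K = (\Pi_{\text{div}}(w - p_K))|_K$. The inverse inequality and local $L^2$-boundedness give
\begin{equation*}
  \norm[0]{\nabla \Pi_{\text{div}} w}_K
  = \norm[0]{\nabla(\Pi_{\text{div}} w - p_K)}_K
  \lesssim h_K^{-1}\norm[0]{\Pi_{\text{div}}(w-p_K)}_K
  \lesssim h_K^{-1}\norm[0]{w-p_K}_{\omega_K}
  \lesssim |w|_{H^1(\omega_K)},
\end{equation*}
where the last step is Poincar\'e on the patch. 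Summing over cells with finite overlap of the patches (quasi-uniformity) bounds the gradient term by $\norm[0]{w}_{H^1(\Omega)}$.

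For the jump term on an interior face $F=\partial K^+\cap\partial K^-$, write
\begin{equation*}
  \jump{\Pi_{\text{div}} w}
  = (\Pi_{\text{div}} w - w)^+ - (\Pi_{\text{div}} w - w)^- ,
\end{equation*}
which uses that $w \in H^1$ has no jump. I would then apply the discrete-plus-continuous trace inequality
\begin{equation*}
  h_F^{-1}\norm[0]{v}_{F}^2 \lesssim h_K^{-2}\norm[0]{v}_K^2 + \norm[0]{\nabla v}_K^2
\end{equation*}
to $v = \Pi_{\text{div}} w - w$. Inserting $p_K$ gives $\Pi_{\text{div}} w - w = \Pi_{\text{div}}(w-p_K) - (w - p_K)$ on $K$. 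The $L^2$ part is bounded by $h_K^{-1}\norm[0]{w-p_K}_{\omega_K} \lesssim |w|_{H^1(\omega_K)}$, and the gradient part by the previous paragraph plus $|w|_{H^1(K)}$. Boundary faces are handled the same way, using $w=0$ on $\partial\Omega$. Summing completes the estimate.

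The main obstacle is justifying locality, i.e.\ that $\Pi_{\text{div}}$ restricted to $K$ reproduces constants and is $L^2$-bounded using only a patch $\omega_K$. The cited constructions are bounded but not all are local. If only global $L^2$-boundedness is available, I would instead use the commuting property with a smoothed projection and a Scott--Zhang-type $H^1$-stable interpolant $I_h w$. I would write $\Pi_{\text{div}} w = I_h w + \Pi_{\text{div}}(w - I_h w)$, using $\Pi_{\text{div}} I_h w = I_h w$ when $I_h w \in V_h\cap H(\text{div},\Omega)$. The $I_h w$ term is $H^1$-conforming and bounded in dg norm by $\norm[0]{w}_{H^1}$. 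The remainder is controlled by an inverse estimate together with $\norm[0]{w - I_h w}_{L^2(\omega_K)} \lesssim h \norm[0]{w}_{H^1(\omega_K)}$, but this again requires local $L^2$-stability of $\Pi_{\text{div}}$. That local stability is exactly what the cited references provide, so the proof ultimately rests on it.
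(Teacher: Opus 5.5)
Your argument is correct and is essentially the paper's proof. You subtract a local constant, combine an inverse estimate with the local $L^2$-stability of $\Pi_{\text{div}}$ (which also yields $\norm[0]{\Pi_{\text{div}} w - w}_K \lesssim h_K \norm[0]{\nabla w}_{\omega_K}$), and control the jumps by writing $\jump{\Pi_{\text{div}} w} = \jump{\Pi_{\text{div}} w - w}$ and applying a trace inequality cell by cell. The only differences are that the paper uses the multiplicative trace inequality and the cell mean, where you use the additive scaled form and the patch mean, which just streamlines the same steps.

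Your fallback of ``simply bounding the full trace'' on boundary faces cannot work. For a constant $w \neq 0$ one has $\Pi_{\text{div}} w = w$, so the boundary part of $\norm[0]{\Pi_{\text{div}} w}_{dg}^2$ grows like $h^{-1}$ while $\norm[0]{w}_{H^1(\Omega)}$ stays fixed. The assumption $w = 0$ on $\partial\Omega$ that you end up using is therefore genuinely needed. The paper's ``similar arguments hold on a boundary face'' relies on it tacitly, and it does hold in the application, where $w = \mathcal{B}(q_0^*) \in [H_0^1(\Omega)]^d$.
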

\begin{proof}
  Let $\Pi_0 w$ be the $L^2$ projection of $w$ to
  $[\mathbb{P}_0(K)]^d$ and note that
  $\norm[0]{\nabla \Pi_{\text{div}} w}_K = \norm[0]{\nabla
    (\Pi_{\text{div}} w - \Pi_0 w)}_K \lesssim h_K^{-1} \norm[0]{
    (\Pi_{\text{div}} w - \Pi_0 w)}_K \lesssim h_K^{-1} (\norm[0]{
    \Pi_{\text{div}} w - w}_K + \norm[0]{ w - \Pi_0 w}_K) \lesssim
  \norm[0]{\nabla w}_{N(K)}$, where $N(K)$ is the union of $K$ and its
  adjacent elements. Then,
  $\norm[0]{\nabla \Pi_{\text{div}} w}_{\mathcal{T}_h}^2 =
  \sum_{K\in\mathcal{T}_h} \norm[0]{\nabla \Pi_{\text{div}} w }_K^2
  \lesssim \norm[0]{\nabla w}_{L^2(\Omega)}^2$. We next estimate
  $h_F^{-1} \norm[0]{\jump{\Pi_{\text{div}} w}}_F^2$. Since $w$ is
  single-valued on an interior face $F$, we note that
  $h_F^{-1} \norm[0]{\jump{\Pi_{\text{div}} w}}_F^2 = h_F^{-1}
  \norm[0]{\jump{\Pi_{\text{div}} w - w}}_F^2 \le 2 h_F^{-1}
  (\norm[0]{\Pi_{\text{div}} w^+ - w}_F^2 + \norm[0]{\Pi_{\text{div}}
    w^- - w}_F^2)$. By \cite[(1.19)]{di2011mathematical}, we find that
  $h_F^{-1} \norm[0]{\Pi_{\text{div}} w^+ - w}_F^2 \lesssim h_F^{-1}
  \norm[0]{\Pi_{\text{div}} w - w}_K \norm[0]{\nabla(\Pi_{\text{div}}
    w^+ - w)}_K \lesssim \norm[0]{\nabla w}_{N(K)}^2$. Similar
  arguments hold on a boundary face.
\end{proof}

We next recall that the Bogovski\u{i} operator
$\mathcal{B}: L_0^2(\Omega) \to [H_0^1(\Omega)]^d$ is a bounded linear
right inverse of $\nabla \cdot$ (cf. \cite[Chapter~3]{Galdi:NSE-book})
and its continuous extension is well-defined as a bounded linear map
from $H^{-1}(\Omega)$ to $[L^2(\Omega)]^d$
(cf. \cite{Geissert:2006}). 

We are now ready to prove \cref{thm:infsupbh}.

\begin{proof}[Proof of \cref{thm:infsupbh}]
  To prove \cref{eq:infsup3} we prove the equivalent result that for
  any $\boldsymbol{q}_h \in \boldsymbol{Q}_h$ there exists
  $\boldsymbol{v}_h \in \boldsymbol{V}_h$ such that
  $\tnorm{\boldsymbol{v}_h}_{v} \lesssim
  \tnorm{\boldsymbol{q}_h}_{q*}$ and
  $b_h(v_h, \boldsymbol{q}_h) = \tnorm{\boldsymbol{q}_h}_{q*}^2$.
  
  Given $\boldsymbol{q}_{h}$ let
  $\boldsymbol{q}_{h} = \boldsymbol{q}_{0}^* + \boldsymbol{q}_{1}^*$
  be the optimal splitting that realizes the infimum as stated in
  \cref{lem:infimum}. We denote the partial energies as
  \begin{equation}
    \label{eq:E0-E1-definition}
    E_0 := \nu^{-1} \tnorm{\boldsymbol{q}_{0}^*}_{q,0*}^2,
    \qquad
    E_1 := \tau^{-1} \tnorm{\boldsymbol{q}_{1}^*}_{q,1}^2. 
  \end{equation}
  By the optimality condition \cref{eq:optimality} with
  $\boldsymbol{r}_h = \boldsymbol{q}_{0}^*$ and
  $\boldsymbol{r}_h = \boldsymbol{q}_{1}^*$ we note that
  \begin{equation}
    \label{eq:optimality-identity}
    E_0 = \tau^{-1} ( \boldsymbol{q}_{1}^*, \boldsymbol{q}_{0}^* )_{q,1},
    \qquad 
    E_1 = \nu^{-1} ( \boldsymbol{q}_{1}^*, \boldsymbol{q}_{0}^* )_{q,0*}.
  \end{equation}
  We now prove the theorem by considering the cases
  $\tau h^2 \lesssim \nu$ and $\nu \lesssim \tau h^2$ separately.

  \smallskip
  \noindent \textbf{Case 1 ($\tau h^2 \lesssim \nu$).}  Let
  $v_0 = v_B + v_l$ where
  $v_B := -\nu^{-1} \Pi_{\text{div}} \mathcal{B}(q_0^*)$ and
  $v_l|_K := \sum_{F\subset \partial K} L_F\del[1]{-\nu^{-1} \eta^{-1}
    h_K(q_0^* - \bar{q}_0^*)}$.
  \\
  \underline{Step 1.} We first show that
  $b_h(v_0, \boldsymbol{q}_h) = \tnorm{\boldsymbol{q}_h}_{q*}^2$. By
  \cref{lem:bdm-lifting}(i), $({v}_{l}, \nabla q_0^*)_K = 0$ and
  $({v}_{l}, \nabla q_1^*)_K = 0$. Since
  $\nabla \cdot \Pi_{\text{div}} \mathcal{B}(q_0^*) = q_0^*$ we find
  from \cref{eq:bilinear-forms-b} and \cref{eq:optimality-identity}:
  \begin{align*}
    b_h(v_0, \boldsymbol{q}_0^*)
    & = \nu^{-1} \norm{q_0^*}_{\mathcal{T}_h}^2
      + \nu^{-1} \eta^{-1} \norm[0]{h_K^{1/2}(q_0^* - \bar{q}_0^*)}_{\partial \mathcal{T}_h}^2
      = E_0,
    \\
    b_h(v_0, \boldsymbol{q}_1^*)
    & = \nu^{-1} (q_0^*, q_1^*)_{\mathcal{T}_h}
      + \nu^{-1} \eta^{-1} \langle h_K (q_0^* - \bar{q}_0^*), q_1^* - \bar{q}_1^* \rangle_{\partial \mathcal{T}_h}
      = \nu^{-1} (\boldsymbol{q}_0^*, \boldsymbol{q}_1^*)_{q,0*}
      = E_1,
  \end{align*}
  so that
  $b_h(v_0, \boldsymbol{q}_h) = E_0 + E_1 =
  \tnorm{\boldsymbol{q}_h}_{q*}^2$.
  \\
  \underline{Step 2.} We now show that
  $\nu \norm{v_0}_{dg}^2 + \tau \norm{v_0}_{\mathcal{T}_h}^2 \lesssim
  \eta \tnorm{\boldsymbol{q}_h}_{q*}^2$. First note that by
  \cref{lem:pidivwH1} and properties of $\mathcal{B}$,
  \begin{equation*}
    \norm[0]{v_B}_{dg} =
    \norm[0]{\nu^{-1}\Pi_{\text{div}} \mathcal{B}(q_0^*)}_{dg} \lesssim
    \nu^{-1}\norm[0]{\mathcal{B}(q_0^*)}_{H^1(\Omega)} \lesssim
    \nu^{-1}\norm{q_0^*}_{\mathcal{T}_h}.
  \end{equation*}
  Next, using \cref{lem:bdm-lifting}(ii):
  \begin{equation}
    \label{eq:vl-dgnorm}
    \begin{split}
      \norm[0]{ v_l }_{dg}^2
      =& \norm[0]{\nabla v_l}_{\mathcal{T}_h}^2
         + \sum_{F \in \mathcal{F}_h} h_F^{-1}\norm[0]{\jump{v_l}}_F^2
      \\
      =& \nu^{-2}\eta^{-2} \norm[3]{\nabla (\sum_{F\subset \partial K} L_F\del[1]{-h_K(q_0^* - \bar{q}_0^*)})}_{\mathcal{T}_h}^2
         + \nu^{-2}\eta^{-2} \sum_{F \in \mathcal{F}_h} h_F^{-1}\norm[3]{\jump{\sum_{F\subset \partial K} L_F\del[1]{-h_K(q_0^* - \bar{q}_0^*)}}}_F^2.
    \end{split}
  \end{equation}
  For the first term on the right hand side we note that
  \begin{equation}
    \label{eq:vl-dgnorm-1term}
    \norm[3]{\nabla (\sum_{F\subset \partial K} L_F\del[1]{- h_K(q_0^* - \bar{q}_0^*)})}_{\mathcal{T}_h}^2
    = \norm[0]{h_K^{1/2}(q_0^* - \bar{q}_0^*)}_{\partial \mathcal{T}_h}^2.
  \end{equation}
  For the second term on the right hand side of \cref{eq:vl-dgnorm} we
  note that by \cref{lem:liftjump},
  $\norm[0]{\jump{L_F\del[1]{-h_K(q_0^* - \bar{q}_0^*)}}}_F^2 \lesssim
  \norm[0]{h_{K^+} (q_0^* - \bar{q}_0^*)^+}_F^2 + \norm[0]{h_{K^-}
    (q_0^* - \bar{q}_0^*)^-}_F^2$ on an interior face and
  $\norm[0]{\jump{L_F\del[1]{-h_K(q_0^* - \bar{q}_0^*)}}}_F^2 \lesssim
  \norm[0]{h_{K} (q_0^* - \bar{q}_0^*)}_F^2$ on a boundary
  face. Therefore, combined with
  \cref{eq:vl-dgnorm,eq:vl-dgnorm-1term} we find that
  \begin{equation*}
    \norm[0]{ v_l }_{dg}^2 \lesssim
    \nu^{-2}\eta^{-2}\norm[0]{h_K^{1/2}(q_0^* - \bar{q}_0^*)}_{\partial \mathcal{T}_h}^2.
  \end{equation*}
  Therefore, and using that $\eta > 1$,  
  \begin{equation*}
    \nu \norm[0]{ v_0 }_{dg}^2
    \le
    2 \nu \del[1]{
      \norm[0]{ v_B }_{dg}^2
      + \norm[0]{ v_l }_{dg}^2
    }
    \lesssim
    \nu^{-1} \tnorm{\boldsymbol{q}_0^*}_{q,0*}^2  = E_0.
  \end{equation*}
  To bound $\tau\norm{v_0}_{\mathcal{T}_h}^2$ we need to bound
  $\tau\norm{v_l}_{\mathcal{T}_h}^2$ and
  $\tau\norm{v_B}_{\mathcal{T}_h}^2$. By \cref{lem:bdm-lifting}(ii),
  using that $\eta > 1$, and since $\tau h^2 \lesssim \nu$,
  \begin{equation*}
    \tau\norm{v_l}_{\mathcal{T}_h}^2
    \lesssim \tau \nu^{-2} \eta^{-1} h^2 \tnorm{\boldsymbol{q}_0^*}_{q,0*}^2
    \lesssim \nu^{-1}\tnorm{\boldsymbol{q}_0^*}_{q,0*}^2 = E_0.
  \end{equation*}
  To bound $\tau\norm{v_B}_{\mathcal{T}_h}^2$ we have by properties of
  $\Pi_{\text{div}}$ and $\mathcal{B}$, and using
  \cref{lem:q0starHmin1} and that $\tau h^2 \lesssim \nu$,
  \begin{align*}
    \tau \norm{v_B}_{\mathcal{T}_h}^2
    \lesssim \tau \nu^{-2} \norm{\mathcal{B}(q_0^*)}_{\mathcal{T}_h}^2
    &\lesssim \tau \nu^{-2} \norm{q_0^*}_{H^{-1}}^2
    \\
    &\lesssim \tau \nu^{-2} ( \nu^2 \tau^{-2} \eta \tnorm{\boldsymbol{q}_1^*}_{q,1}^2
      + h^2 \tnorm{\boldsymbol{q}_0^*}_{q,0*}^2)
    \\
    &\lesssim \tau^{-1} \eta \tnorm{\boldsymbol{q}_1^*}_{q,1}^2
      + \nu^{-1} \tnorm{\boldsymbol{q}_0^*}_{q,0*}^2 
      = E_0 + \eta E_1,
  \end{align*}
  so that
  $\nu \norm{v_0}_{dg}^2 + \tau \norm{v_0}_{\mathcal{T}_h}^2 \lesssim
  \eta \tnorm{\boldsymbol{q}_h}_{q*}^2$.
  \\
  \underline{Step 3.} Define $\bar{v}_0 = \av{v_0}$. Then by
  \cref{eq:dgequiv} and Step 2 we have that
  $\tnorm{\boldsymbol{v}_0}_v^2 \lesssim \eta
  \tnorm{\boldsymbol{q}_h}_{q*}^2$.

  \smallskip
  \noindent \textbf{Case 2 ($\nu \lesssim \tau h^2$).} Let
  $v_1 := v_{int} + v_l$ where $v_{int}$ is such that
  $(v_{int}, w_h)_K = \tau^{-1}(\nabla q_1^*, w_h)_K$ for all
  $w_h \in [P_{k-2}(K)]^d$ and all other local degrees of freedom are
  zero (see \cref{lem:interior-local}), and where \\
  $v_l|_K = \sum_{F\subset \partial K} L_F \del[1]{-\tau^{-1} \eta
    h_K^{-1}(q_1^* - \bar{q}_1^*)}$.
  \\
  \underline{Step 4.} We first show that
  $b(v_1, \boldsymbol{q}_h) = \tnorm{\boldsymbol{q}_h}_{q*}^2$. By
  \cref{lem:bdm-lifting}(i) we note that $(v_l,\nabla q_0^*)_K = 0$
  and $(v_l,\nabla q_1^*)_K = 0$. Then
  \begin{equation*}
    b_h(v_1, \boldsymbol{q}_1^*)
    = \tau^{-1} \norm[0]{\nabla q_1^*}_{\mathcal{T}_h}^2
    + \tau^{-1} \eta \norm[0]{h_K^{-1/2}(q_1^* - \bar{q}_1^*)}_{\partial \mathcal{T}_h}^2
    = E_1.
  \end{equation*}
  Evaluating against $\boldsymbol{q}_0^*$ and using
  \cref{eq:optimality-identity} gives
  \begin{align*}
    b_h(v_1, \boldsymbol{q}_0^*)
    =& (v_{int}, \nabla q_0^*)_{\mathcal{T}_h}
       - \langle v_l \cdot n, q_0^* - \bar{q}_0^* \rangle_{\partial \mathcal{T}_h}
    \\
    =& \tau^{-1}(\nabla q_1^*, \nabla q_0^*)_{\mathcal{T}_h}
       + \tau^{-1} \eta \langle h_K^{-1}(q_1^*-\bar{q}_1^*), q_0^*-\bar{q}_0^* \rangle_{\partial \mathcal{T}_h}
    \\
    =& \tau^{-1} \del[1]{ \boldsymbol{q}_1^*, \boldsymbol{q}_0^* }_{q,1} = E_0
  \end{align*}
  Therefore,
  $b(v_1, \boldsymbol{q}_h) = E_0 + E_1 =
  \tnorm{\boldsymbol{q}_h}_{q*}^2$.
  \\
  \underline{Step 5.} We now show that
  $\nu \norm{v_1}_{dg}^2 + \tau \norm[0]{v_1}_{\mathcal{T}_h}^2
  \lesssim \eta \tnorm{\boldsymbol{q}_h}_{q*}^2$. Observe that
  \begin{equation*}
    \norm[0]{v_{int}}_K^2
    = (v_{int},v_{int})_K
    = \tau^{-1} (\nabla q_1^*, v_{int})_K
    \le \tau^{-1} \norm[0]{\nabla q_1^*}_K \norm[0]{v_{int}}_K
  \end{equation*}
  so that
  $\norm[0]{v_{int}}_K \le \tau^{-1} \norm[0]{\nabla q_1^*}_K$. Then,
  using an inverse inequality \cite[Lemma 1.44]{di2011mathematical}
  and a discrete trace inequality \cite[Lemma
  1.46]{di2011mathematical}
  \begin{align*}
    \norm[0]{\nabla v_{int}}_K
    &\lesssim h_K^{-1}\norm[0]{v_{int}}_K \le \tau^{-1} h_K^{-1} \norm[0]{\nabla q_1^*}_K,
    \\
    h_K^{-1/2}\norm[0]{v_{int}}_F &\lesssim h_K^{-1} \norm[0]{v_{int}}_K \le \tau^{-1} h_K^{-1} \norm[0]{\nabla q_1^*}_K,
  \end{align*}
  and so,
  \begin{equation}
    \label{eq:vintdgbound}
    \norm[0]{v_{int}}_{dg}^2
    = \norm[0]{\nabla v_{int}}_{\mathcal{T}_h}^2
    + \sum_{F \in \mathcal{F}_h}h_F^{-1} \norm[0]{\jump{v_{int}}}_F^2
    \lesssim \tau^{-2} h^{-2} \norm[0]{\nabla q_1^*}_{\mathcal{T}_h}^2.
  \end{equation}
  Furthermore, by \cref{lem:liftjump}, we have on interior and
  boundary faces, respectively,
  \begin{align*}
    \norm[0]{\jump{L_F\del[1]{-\tau^{-1}\eta h_K^{-1}(q_1^* - \bar{q}_1^*)}}}_F^2
    &\lesssim \norm[0]{\tau^{-1}\eta h_{K^+}^{-1} (q_1^* - \bar{q}_1^*)^+}_F^2
      + \norm[0]{\tau^{-1} \eta h_{K^-}^{-1} (q_1^* - \bar{q}_1^*)^-}_F^2,
    \\
    \norm[0]{\jump{L_F\del[1]{-\tau^{-1} \eta h_K^{-1}(q_1^* - \bar{q}_1^*)}}}_F^2
    &\lesssim \norm[0]{\tau^{-1} \eta h_{K}^{-1} (q_1^* - \bar{q}_1^*)}_F^2.
  \end{align*}
  Then, also using \cref{lem:bdm-lifting}(ii),
  \begin{equation}
    \label{eq:vldgbound}
    \norm[0]{v_l}_{dg}^2
    = \norm[0]{\nabla v_l}_{\mathcal{T}_h}^2
    + \sum_{F \in \mathcal{F}_h}h_F^{-1} \norm[0]{\jump{v_l}}_F^2
    \lesssim h^{-1} \tau^{-2} \eta^2 \norm[0]{ h_K^{-1}(q_1^*-\bar{q}_1^*) }_{\partial \mathcal{T}_h}^2.
  \end{equation}
  By a triangle inequality, \cref{eq:vintdgbound,eq:vldgbound} and
  using that $\nu \lesssim \tau h^2$,
  \begin{equation*}
    \begin{split}
      \nu \norm[0]{v_1}_{dg}^2
      &\le 2\nu\del[2]{ \norm[0]{v_{int}}_{dg}^2 + \norm[0]{v_l}_{dg}^2 }
      \lesssim \nu \tau^{-2}h^{-2} \tnorm{\boldsymbol{q}_1^*}_{q,1}^2
        + \nu h^{-2}\tau^{-2}\eta \tnorm{\boldsymbol{q}_1^*}_{q,1}^2
      \lesssim \tau^{-1} \eta \tnorm{\boldsymbol{q}_1^*}_{q,1}^2.
    \end{split}
  \end{equation*}    
  Next, we note that by \cref{lem:bdm-lifting}(ii)
  \begin{equation*}
    \norm[0]{v_l}_{\mathcal{T}_h}
    \lesssim h^{1/2} \tau^{-1}\eta \norm[0]{h_K^{-1}(q_1^*-\bar{q}_1^*)}_{\partial \mathcal{T}_h}
    \le \tau^{-1}\eta^{1/2} \tnorm{\boldsymbol{q}_1^*}_{q,1},
  \end{equation*}
  and so, combined with
  $\norm[0]{v_{int}}_{\mathcal{T}_h} \le \tau^{-1}
  \tnorm{\boldsymbol{q}_1^*}_{q,1}$,
  \begin{align*}
    \tau \norm[0]{v_1}_{\mathcal{T}_h}^2
    \le 2\tau \del[2]{\norm[0]{v_l}_{\mathcal{T}_h}^2 + \norm[0]{v_{int}}_{\mathcal{T}_h}^2}
    \lesssim \tau^{-1}\eta \tnorm{\boldsymbol{q}_1^*}_{q,1}^2
    + \tau^{-1} \tnorm{\boldsymbol{q}_1^*}_{q,1}^2
    \lesssim \tau^{-1} \eta \tnorm{\boldsymbol{q}_1^*}_{q,1}^2.
  \end{align*}
  Collecting the aforementioned results we find that
  \begin{equation}
    \label{eq:sumv1dgl2}
    \nu \norm[0]{v_1}_{dg}^2 + \tau \norm[0]{v_1}_{\mathcal{T}_h}^2
    \lesssim \tau^{-1} \eta \tnorm{\boldsymbol{q}_1^*}_{q,1}^2
    \le \eta \tnorm{\boldsymbol{q}_h}_{q*}^2.
  \end{equation}
  \underline{Step 6.} Define $\bar{v}_1 = \av{v_1}$. Then by
  \cref{eq:dgequiv} and Step 5 we have that
  $\tnorm{\boldsymbol{v}_1}_v^2 \lesssim \eta
  \tnorm{\boldsymbol{q}_h}_{q*}^2$.

  \smallskip In Cases 1 and 2 we have therefore shown that given any
  $\boldsymbol{q}_h \in \boldsymbol{Q}_h$ there exists
  $\boldsymbol{v}_h \in \boldsymbol{V}_h$ such that
  $\tnorm{\boldsymbol{v}_h}_{v} \lesssim \eta
  \tnorm{\boldsymbol{q}_h}_{q*}$ and
  $b_h(v_h, \boldsymbol{q}_h) = \tnorm{\boldsymbol{q}_h}_{q*}^2$.
\end{proof}

\subsection{Uniform well-posedness of the HDG discretization}
\label{ss:uniformwphdg}

In this section we prove uniform well-posedness of the HDG
discretization by proving that the bilinear form $a_h(\cdot, \cdot)$
is uniformly bounded and inf-sup stable with respect to the norm
$\tnorm{\cdot}_{\boldsymbol{X}_h}$. We start by proving uniform
boundedness.

\begin{lemma}[Uniform boundedness of $a_h$]
  \label{lem:boundedness}
  There exists a uniform constant $c_b > 0$ such that
  \begin{equation}
    \label{eq:boundedness}
    |a_h(\boldsymbol{x}_h, \boldsymbol{y}_h)|
    \leq c_b \tnorm{\boldsymbol{x}_h}_{\boldsymbol{X}_h} \tnorm{\boldsymbol{y}_h}_{\boldsymbol{X}_h}
    \quad \forall \boldsymbol{x}_h, \boldsymbol{y}_h \in \boldsymbol{X}_h.
  \end{equation}
\end{lemma}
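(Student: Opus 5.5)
We bound the three pieces of $a_h$ separately: the mass term, the viscous form $d_h$, and the two coupling terms $b_h$. For the mass term, Cauchy--Schwarz gives $\tau(u_h,v_h)_{\mathcal{T}_h} \le \tau^{1/2}\norm[0]{u_h}_{\mathcal{T}_h}\,\tau^{1/2}\norm[0]{v_h}_{\mathcal{T}_h} \le \tnorm{\boldsymbol{u}_h}_v\tnorm{\boldsymbol{v}_h}_v$. For $d_h$ we use the standard HDG argument. The volume and penalty terms are bounded directly by $\nu\tnorm{\boldsymbol{u}_h}_{v,1}\tnorm{\boldsymbol{v}_h}_{v,1}$. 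For each consistency term $\langle \nu\nabla u_h n, v_h-\bar v_h\rangle_{\partial\mathcal{T}_h}$, the discrete trace inequality $h_K^{1/2}\norm[0]{\nabla u_h}_{\partial K}\lesssim \norm[0]{\nabla u_h}_K$ gives
\begin{equation*}
  \langle \nu\nabla u_h n, v_h-\bar v_h\rangle_{\partial\mathcal{T}_h}
  \lesssim \nu\,\eta^{-1/2}\norm[0]{\nabla u_h}_{\mathcal{T}_h}\,
  \eta^{1/2}\norm[0]{h_K^{-1/2}(v_h-\bar v_h)}_{\partial\mathcal{T}_h},
\end{equation*}
and $\eta>1$ makes this a uniform bound. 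Hence $d_h(\boldsymbol{u}_h,\boldsymbol{v}_h)\lesssim \nu\tnorm{\boldsymbol{u}_h}_{v,1}\tnorm{\boldsymbol{v}_h}_{v,1}\le \tnorm{\boldsymbol{u}_h}_v\tnorm{\boldsymbol{v}_h}_v$.

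The main step is to show $b_h(v_h,\boldsymbol{q}_h)\lesssim \tnorm{\boldsymbol{v}_h}_v\tnorm{\boldsymbol{q}_h}_q$; the sum norm is the obstacle. For any splitting $\boldsymbol{q}_h=\boldsymbol{q}_{1,h}+\boldsymbol{q}_{2,h}$ we use the two forms of $b_h$ in \cref{eq:bilinear-forms-b} on the two parts. For $\boldsymbol{q}_{2,h}$ we take the integrated-by-parts form:
\begin{equation*}
  b_h(v_h,\boldsymbol{q}_{2,h}) = (\nabla q_{2,h},v_h)_{\mathcal{T}_h} - \langle q_{2,h}-\bar q_{2,h}, v_h\cdot n\rangle_{\partial\mathcal{T}_h}.
\end{equation*}
The volume term is bounded by $\tau^{-1/2}\norm[0]{\nabla q_{2,h}}\,\tau^{1/2}\norm[0]{v_h}$. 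For the face term, the discrete trace inequality $h_K^{1/2}\norm[0]{v_h}_{\partial K}\lesssim\norm[0]{v_h}_K$ gives a bound $\lesssim \eta^{-1/2}\tau^{-1/2}\tnorm{\boldsymbol{q}_{2,h}}_{q,1}\,\tau^{1/2}\norm[0]{v_h}_{\mathcal{T}_h}$. Together, $b_h(v_h,\boldsymbol{q}_{2,h})\lesssim \tau^{-1/2}\tnorm{\boldsymbol{q}_{2,h}}_{q,1}\tnorm{\boldsymbol{v}_h}_v$.

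For $\boldsymbol{q}_{1,h}$ we take the first form of $b_h$, but first insert $\bar v_h$. Since $\bar q_{1,h}$ is single valued on interior faces, $\bar v_h\cdot n$ takes opposite signs from the two sides, and $\bar v_h=0$ on $\partial\Omega$, we have $\langle \bar q_{1,h},\bar v_h\cdot n\rangle_{\partial\mathcal{T}_h}=0$. Therefore
\begin{equation*}
  b_h(v_h,\boldsymbol{q}_{1,h}) = -(q_{1,h},\nabla\cdot v_h)_{\mathcal{T}_h} + \langle \bar q_{1,h},(v_h-\bar v_h)\cdot n\rangle_{\partial\mathcal{T}_h}.
\end{equation*}
This is bounded by $\norm[0]{q_{1,h}}\,\norm[0]{\nabla v_h} + \eta^{-1/2}\norm[0]{h_K^{1/2}\bar q_{1,h}}\,\eta^{1/2}\norm[0]{h_K^{-1/2}(v_h-\bar v_h)}$, which is $\lesssim \nu^{-1/2}\tnorm{\boldsymbol{q}_{1,h}}_{q,0}\,\nu^{1/2}\tnorm{\boldsymbol{v}_h}_{v,1}$.

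Adding the two estimates and applying Cauchy--Schwarz in $\mathbb{R}^2$ gives
\begin{equation*}
  b_h(v_h,\boldsymbol{q}_h)\lesssim \tnorm{\boldsymbol{v}_h}_v\bigl(\nu^{-1}\tnorm{\boldsymbol{q}_{1,h}}_{q,0}^2+\tau^{-1}\tnorm{\boldsymbol{q}_{2,h}}_{q,1}^2\bigr)^{1/2}.
\end{equation*}
The splitting was arbitrary, so we take the infimum over splittings to obtain $b_h(v_h,\boldsymbol{q}_h)\lesssim\tnorm{\boldsymbol{v}_h}_v\tnorm{\boldsymbol{q}_h}_q$. The same bound holds for $b_h(u_h,\boldsymbol{p}_h)$ with the roles of the arguments swapped.

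Finally, we sum the four bounds and use $\tnorm{\boldsymbol{u}_h}_v+\tnorm{\boldsymbol{p}_h}_q\le \sqrt2\,\tnorm{\boldsymbol{x}_h}_{\boldsymbol{X}_h}$, and likewise for $\boldsymbol{y}_h$. This gives \cref{eq:boundedness} with a constant $c_b$ depending only on the trace and inverse constants, hence uniform since $\eta>1$.
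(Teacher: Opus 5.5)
Your proof is correct and follows the same route as the paper. Both bound $\tau(u_h,v_h)_{\mathcal{T}_h}+d_h$ by $\tnorm{\cdot}_v$, split $\boldsymbol{q}_h=\boldsymbol{q}_{1,h}+\boldsymbol{q}_{2,h}$ arbitrarily, control $b_h(v_h,\boldsymbol{q}_{1,h})$ by $\nu^{1/2}\tnorm{\boldsymbol{v}_h}_{v,1}\,\nu^{-1/2}\tnorm{\boldsymbol{q}_{1,h}}_{q,0}$ and $b_h(v_h,\boldsymbol{q}_{2,h})$ by $\tau^{1/2}\norm[0]{v_h}_{\mathcal{T}_h}\,\tau^{-1/2}\tnorm{\boldsymbol{q}_{2,h}}_{q,1}$, and take the infimum over splittings. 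The only difference is that you prove these estimates directly (discrete trace inequalities, and $\langle \bar{q}_{1,h},\bar{v}_h\cdot n\rangle_{\partial\mathcal{T}_h}=0$), whereas the paper cites them from Rhebergen--Wells and from the authors' earlier work.
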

\begin{proof}
  By \cite[Lemma 4.3]{rhebergen2017analysis} and using the definition
  of $\tnorm{\cdot}_v$, we find that there exists a uniform constant
  $c_d > 0$ such that
  \begin{equation}
    \label{eq:boundedness-aux1}
    |\tau(u_h, v_h)_{\mathcal{T}_h}
    + d_h(\boldsymbol{u}_h, \boldsymbol{v}_h)| 
    \leq c_d \tnorm{\boldsymbol{u}_h}_v \tnorm{\boldsymbol{v}_h}_v
    \quad \forall \boldsymbol{u}_h, \boldsymbol{v}_h \in \boldsymbol{V}_h.
  \end{equation}
  Furthermore, let $\boldsymbol{v}_h \in \boldsymbol{V}_h$,
  $\boldsymbol{q}_h \in \boldsymbol{Q}_h$, and assume an arbitrary
  splitting
  $\boldsymbol{q}_h = \boldsymbol{q}_{1,h} + \boldsymbol{q}_{2,h}$.
  Then
  \begin{equation}
    \label{eq:boundedness-aux2}
    b_h(v_h, \boldsymbol{q}_h) = b_h(v_h, \boldsymbol{q}_{1,h})
    + b_h(v_h, \boldsymbol{q}_{2,h}).
  \end{equation}
  By \cite[Lemma 4.8 and eq. (102)]{rhebergen2017analysis}, there
  exists a uniform constant $c_1 > 0$, such that
  \begin{equation}
    \label{eq:boundedness-aux3}
    b_h(\boldsymbol{v}_h, \boldsymbol{q}_{1,h}) 
    \leq c_1 \nu^{1/2} \tnorm{\boldsymbol{v}_h}_{v,1} \nu^{-1/2} \tnorm{\boldsymbol{q}_{1,h}}_{q,0}
    \leq c_1 \tnorm{\boldsymbol{v}_h}_v \nu^{-1/2} \tnorm{\boldsymbol{q}_{1,h}}_{q,0}.
  \end{equation}
  Next, following the same steps as used to prove
  \cite[eq. (A.3)]{henriquez2025parameter}, we find that there exists
  a uniform constant $c_2 > 0$ such that
  \begin{equation}
    \label{eq:boundedness-aux4}
    b_h(v_h, \boldsymbol{q}_{2,h}) 
    \leq c_2 \tau^{1/2} \norm[0]{v_h}_{\mathcal{T}_h}
    \tau^{-1/2} \tnorm{\boldsymbol{q}_{2,h}}_{q,1}
    \leq c_2 \tnorm{\boldsymbol{v}_h}_v
    \tau^{-1/2} \tnorm{\boldsymbol{q}_{2,h}}_{q,1}.
  \end{equation}
  Combining
  \cref{eq:boundedness-aux2,eq:boundedness-aux3,eq:boundedness-aux4}
  we obtain
  \begin{equation*}
    b_h(v_h, \boldsymbol{q}_h) 
    \leq c \tnorm{\boldsymbol{v}_h}_{v}
    ( \nu^{-1} \tnorm{\boldsymbol{q}_{1,h}}_{q,0}^2 
    + \tau^{-1} \tnorm{\boldsymbol{q}_{2,h}}_{q,1}^2)^{1/2}.
  \end{equation*}
  Taking the infimum over all the splittings of $\boldsymbol{q}_h$ and
  combining with \cref{eq:boundedness-aux1}, we find the desired
  bound.
\end{proof}

Before proving that $a_h(\cdot, \cdot)$, note that by \cite[Lemma
4.2]{rhebergen2017analysis}, there exists a uniform constant $c_c > 0$
such that
\begin{equation}
  \label{eq:coercivity}
  d_h(\boldsymbol{v}_h, \boldsymbol{v}_h) 
  + \tau \norm[0]{v_h}_{\mathcal{T}_h}^2
  \geq c_c \tnorm{\boldsymbol{v}_h}_v^2 
  \quad \forall \boldsymbol{v}_h \in \boldsymbol{V}_h.
\end{equation}

\begin{lemma}[Uniform inf-sup stability of $a_h$]
  \label{lem:stability}
  There exists a uniform constant $c_s > 0$ such that
  \begin{equation}
    \label{eq:stability}
    \inf_{\boldsymbol{x}_h \in \boldsymbol{X}_h}
    \sup_{\boldsymbol{y}_h \in \boldsymbol{X}_h}
    \frac{a_h(\boldsymbol{x}_h, \boldsymbol{y}_h)}
    {\tnorm{\boldsymbol{x}_h}_{\boldsymbol{X}_h} \tnorm{\boldsymbol{y}_h}_{\boldsymbol{X}_h} }
    \geq c_s.
  \end{equation}
\end{lemma}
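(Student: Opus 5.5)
This is the standard saddle-point argument of Babuška--Brezzi type. The ingredients are the coercivity of the velocity block \cref{eq:coercivity}, the boundedness of the velocity block \cref{eq:boundedness-aux1}, the boundedness of $b_h$ from the proof of \cref{lem:boundedness}, and the uniform inf-sup condition of \cref{thm:infsupbh}.

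Given $\boldsymbol{x}_h = (\boldsymbol{u}_h,\boldsymbol{p}_h)$, I will construct a test function $\boldsymbol{y}_h$ with $\tnorm{\boldsymbol{y}_h}_{\boldsymbol{X}_h} \lesssim \tnorm{\boldsymbol{x}_h}_{\boldsymbol{X}_h}$ and $a_h(\boldsymbol{x}_h,\boldsymbol{y}_h) \gtrsim \tnorm{\boldsymbol{x}_h}_{\boldsymbol{X}_h}^2$.

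\textbf{Step 1.} By \cref{thm:infsupbh}, and the proof that realises it, I pick $\boldsymbol{w}_h \in \boldsymbol{V}_h$ with
\[
b_h(w_h,\boldsymbol{p}_h) = \tnorm{\boldsymbol{p}_h}_{q*}^2 \gtrsim \tnorm{\boldsymbol{p}_h}_q^2
\quad\text{and}\quad
\tnorm{\boldsymbol{w}_h}_v \lesssim \tnorm{\boldsymbol{p}_h}_{q*} \lesssim \tnorm{\boldsymbol{p}_h}_q .
\]
Here I use the stated uniform equivalence of $\tnorm{\cdot}_q$ and $\tnorm{\cdot}_{q*}$. Alternatively, one can rescale the supremum in \cref{eq:infsup3} so that $\tnorm{\boldsymbol{w}_h}_v = \tnorm{\boldsymbol{p}_h}_q$ and $b_h(w_h,\boldsymbol{p}_h) \ge c_3 \tnorm{\boldsymbol{p}_h}_q^2$.

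\textbf{Step 2.} I take $\boldsymbol{y}_h = (\boldsymbol{u}_h + \delta \boldsymbol{w}_h,\, -\boldsymbol{p}_h)$ with a small uniform $\delta>0$. By symmetry of the $b_h$ terms, the choice $\boldsymbol{q}_h = -\boldsymbol{p}_h$ cancels the coupling $b_h(u_h,\boldsymbol{p}_h)$ in the $\boldsymbol{u}_h$ part, giving
\[
a_h(\boldsymbol{x}_h,\boldsymbol{y}_h) = \tau\norm[0]{u_h}_{\mathcal{T}_h}^2 + d_h(\boldsymbol{u}_h,\boldsymbol{u}_h) + \delta\bigl(\tau(u_h,w_h)_{\mathcal{T}_h} + d_h(\boldsymbol{u}_h,\boldsymbol{w}_h)\bigr) + \delta\, b_h(w_h,\boldsymbol{p}_h).
\]
The first two terms are bounded below by $c_c\tnorm{\boldsymbol{u}_h}_v^2$ using \cref{eq:coercivity}. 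The last term is bounded below by $\delta c\tnorm{\boldsymbol{p}_h}_q^2$ by Step 1. The middle term is bounded in absolute value by $\delta c_d \tnorm{\boldsymbol{u}_h}_v \tnorm{\boldsymbol{w}_h}_v \lesssim \delta \tnorm{\boldsymbol{u}_h}_v\tnorm{\boldsymbol{p}_h}_q$ via \cref{eq:boundedness-aux1}. Applying Young's inequality to this cross term and then choosing $\delta$ small (depending only on $c_c$, $c_d$, $c_3$, hence uniform) gives
\[
a_h(\boldsymbol{x}_h,\boldsymbol{y}_h) \gtrsim \tnorm{\boldsymbol{u}_h}_v^2 + \tnorm{\boldsymbol{p}_h}_q^2 = \tnorm{\boldsymbol{x}_h}_{\boldsymbol{X}_h}^2 .
\]

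\textbf{Step 3.} The test function satisfies
\[
\tnorm{\boldsymbol{y}_h}_{\boldsymbol{X}_h} \le \tnorm{\boldsymbol{u}_h}_v + \delta\tnorm{\boldsymbol{w}_h}_v + \tnorm{\boldsymbol{p}_h}_q \lesssim \tnorm{\boldsymbol{x}_h}_{\boldsymbol{X}_h}.
\]
Dividing the bound from Step 2 by this yields \cref{eq:stability}.

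The only delicate point is making sure every constant is uniform. The inf-sup constant of \cref{thm:infsupbh} depends on $\eta$, and $\eta$ is fixed, so this is acceptable. The coercivity constant $c_c$ must be independent of $\nu$, $\tau$ and $h$, which holds because $\tnorm{\cdot}_v$ scales exactly like the velocity block. All of the genuine difficulty sits in \cref{thm:infsupbh}, which is already established, so this lemma is a routine assembly.
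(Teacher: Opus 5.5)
Your proposal is correct and follows essentially the same route as the paper. The paper also takes the test function $\boldsymbol{y}_h = (\boldsymbol{u}_h + \delta\tilde{\boldsymbol{v}}_h, -\boldsymbol{p}_h)$ with $b_h(\tilde{v}_h,\boldsymbol{p}_h) = \tnorm{\boldsymbol{p}_h}_q^2$ and $\tnorm{\tilde{\boldsymbol{v}}_h}_v \le c_3^{-1}\tnorm{\boldsymbol{p}_h}_q$, which is just a rescaling equivalent to your second option in Step 1; it then uses coercivity, boundedness of the velocity block and Young's inequality to fix a uniform $\delta$, and a triangle-inequality bound on $\tnorm{\boldsymbol{y}_h}_{\boldsymbol{X}_h}$.
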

\begin{proof}
  Given $\boldsymbol{p}_h \in \boldsymbol{Q}_h$, by \cref{eq:infsup3}
  there exists a $\Tilde{\boldsymbol{v}}_h \in \boldsymbol{V}_h$ such
  that
  \begin{equation}
    \label{eq:stability-aux2}
    b_h(\Tilde{v}_h, \boldsymbol{p}_h) 
    = \tnorm{\boldsymbol{p}_h}_q^2
    \quad
    \text{ and }
    \quad
    \tnorm{\tilde{\boldsymbol{v}}_h}_v
    \leq c_3^{-1} \tnorm{\boldsymbol{p}_h}_q. 
  \end{equation}
  Given non-null
  $\boldsymbol{x}_h := (\boldsymbol{u}_h, \boldsymbol{p}_h) \in
  \boldsymbol{X}_h$, define
  $\boldsymbol{y}_h := (\boldsymbol{v}_h, \boldsymbol{q}_h) \in
  \boldsymbol{X}_h$ such that
  \begin{equation*}
    \boldsymbol{v}_h = \boldsymbol{u}_h + \delta \tilde{\boldsymbol{v}}_h
    \quad
    \text{ and }
    \quad
    \boldsymbol{q}_h = - \boldsymbol{p}_h,
  \end{equation*}
  where $\delta > 0$ is a positive constant to be determined.

  Using \cref{eq:stability-aux2} in combination with
  \cref{eq:coercivity,eq:boundedness-aux1} and Young's inequality, we
  find:
  \begin{align*}
    a_h(\boldsymbol{x}_h, \boldsymbol{y}_h)
    & = d_h(\boldsymbol{u}_h, \boldsymbol{u}_h)
      + \tau \norm[0]{u_h}_{\mathcal{T}_h}^2
      + \delta \del[2]{\tau (u_h, \tilde{v}_h)_{\mathcal{T}_h}
      + d_h(\boldsymbol{u}_h, \tilde{\boldsymbol{v}}_h)}
      + \delta b_h(\tilde{v}_h, \boldsymbol{p}_h)
    \\
    &\geq (c_c - \frac{\delta}{2}c_d^2 c_3^{-2})\tnorm{\boldsymbol{u}_h}_v^2
      + \frac{\delta}{2} \tnorm{\boldsymbol{p}_h}_q^2.
  \end{align*}
  Choosing $\delta = c_c/(c_d^2 c_3^{-2})$, we obtain
  \begin{equation}
    \label{eq:stability-aux1a}
    a_h(\boldsymbol{x}_h, \boldsymbol{y}_h)
    \geq c_{s1} \tnorm{\boldsymbol{x}_h}_{\boldsymbol{X}_h}^2,
  \end{equation}
  where $c_{s1} := (c_c/2) \min(1, 1/(c_d^2 c_3^{-2}))$.

  Next, we note that
  \begin{equation*}
    \tnorm{\boldsymbol{v}_h}_v^2
    \leq 2 \tnorm{\boldsymbol{u}_h}_v^2
    + 2 \delta^2 \tnorm{\tilde{\boldsymbol{v}}_h}_v^2
    \leq 2 \tnorm{\boldsymbol{u}_h}_v^2
    + 2 \delta^2 c_3^{-2} \tnorm{\boldsymbol{p}_h}_q^2.
  \end{equation*}
  Noting furthermore that
  $\tnorm{\boldsymbol{q}_h}_q = \tnorm{\boldsymbol{p}_h}_q$ we find
  that there exists a uniform constant $c_{s2} > 0$ such that
  \begin{equation}
    \label{eq:stability-aux1b}
    \tnorm{\boldsymbol{y}_h}_{\boldsymbol{X}_h}
    \leq c_{s2} \tnorm{\boldsymbol{x}_h}_{\boldsymbol{X}_h}.    
  \end{equation}
  \Cref{eq:stability} now follows as a result of
  \cref{eq:stability-aux1a,eq:stability-aux1b}.
\end{proof}

\section{Preconditioning}
\label{s:precon}

\subsection{Local solvers and reduced problem}
\label{ss:local-solvers}

In this section we present the reduced problem in a variational
setting, which is obtained after eliminating $u_h$ and $p_h$ from
\cref{eq:hdgmethod}. To obtain this reduced problem we require local
solvers (see also \cite[Section
2.4]{rhebergen2018preconditioning}). To set notation, let
$V(K) := [\mathbb{P}_k(K)]^d$ and $Q(K) := \mathbb{P}_{k-1}(K)$ for
$K \in \mathcal{T}_h$.

\begin{definition}[Local solvers]
  \label{def:localsolver-a0}
  Given $(\bar{m}_h, \bar{t}_h) \in \bar{V}_h \times \bar{Q}_h$ and
  $s \in [L^2(\Omega)]^d$, we define the functions
  $u_h^L(\bar{m}_h, \bar{t}_h, s) \in V_h$ and
  $p_h^L(\bar{m}_h, \bar{t}_h, s) \in Q_h$ such that when restricted
  to cell $K$ it satisfies
  \begin{equation}
    \label{eq:local-solver}
    a_h^K((u_h^L, p_h^L), (v_h, q_h)) = f_h^K(v_h) \quad \forall (v_h, q_h) 
    \in V(K) \times Q(K),
  \end{equation}
  where
  \begin{align*}
    a_h^K((u_h, p_h), (v_h, q_h)) 
    :=& \tau (u_h, v_h)_K
        + \nu (\nabla u_h, \nabla v_h)_K
        + \nu \eta h_K^{-1} \langle u_h, v_h\rangle_{\partial K}
    \\
      & - \nu \langle \nabla u_h n, v_h\rangle_{\partial K}
        - \nu \langle \nabla v_h n, u_h\rangle_{\partial K}
    \\
      & - (p_h, \nabla \cdot v_h)_K
        - (q_h, \nabla \cdot u_h)_K,
    \\
    f_h^K(v_h)
    :=& (s, v_h)_K 
        - \nu \langle \nabla v_h n, \bar{m}_h \rangle_{\partial K}
        + \nu \eta h_K^{-1} \langle \bar{m}_h, v_h \rangle_{\partial K}
    \\
      & - \langle \bar{t}_h, v_h \cdot n\rangle_{\partial K}.
  \end{align*}
\end{definition}

The following lemma defines the reduced formulation of
\cref{eq:hdgmethod}. Its proof is identical to that of \cite[Lemma
4]{rhebergen2018preconditioning} and so it is omitted.

\begin{lemma}[Reduced problem]
  \label{lem:condensed-formulation}
  Given $f \in [L^2(\Omega)]^d$, define $u_h^f := u_h^L(0, 0, f)$
  and $p_h^f := p_h^L(0, 0, f)$. Furthermore, for all
  $\bar{y}_h := (\bar{v}_h, \bar{q}_h) \in \bar{V}_h \times \bar{Q}_h
  $, define $l_u(\bar{y}_h) := u_h^L(\bar{v}_h, \bar{q}_h, 0)$ and
  $l_p(\bar{y}_h) := p_h^L(\bar{v}_h, \bar{q}_h, 0)$. Let
  $\bar{x}_h := (\bar{u}_h, \bar{p}_h) \in \bar{V}_h \times \bar{Q}_h$
  be the solution to
  \begin{equation}
    \label{eq:condensed-formulation}
    \bar{a}_h(\bar{x}_h, \bar{y}_h) 
    = (f, l_u(\bar{y}_h))_{\mathcal{T}_h}
    \quad \bar{y}_h \in \bar{V}_h \times \bar{Q}_h,
  \end{equation}
  where
  \begin{align*}
    \bar{a}_h(\bar{x}_h, \bar{y}_h)
    := a_h((l_u(\bar{x}_h), \bar{u}_h, l_p(\bar{x}_h), \bar{p}_h),(l_u(\bar{y}_h), \bar{v}_h, l_p(\bar{y}_h), \bar{q}_h)).
  \end{align*}
  Then $(u_h, \bar{u}_h, p_h, \bar{p}_h)$, in which
  $u_h = u_h^f + l_u(\bar{x}_h)$ and $p_h = p_h^f + l_p(\bar{x}_h)$,
  solves \cref{eq:hdgmethod}.
\end{lemma}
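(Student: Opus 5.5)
The proof is a direct static-condensation argument built on the local solvers of \cref{def:localsolver-a0}. Given $\bar{x}_h=(\bar{u}_h,\bar{p}_h)$ solving \cref{eq:condensed-formulation}, we set $u_h = u_h^f + l_u(\bar{x}_h)$ and $p_h = p_h^f + l_p(\bar{x}_h)$, and we must show that $(u_h,\bar{u}_h,p_h,\bar{p}_h)$ satisfies \cref{eq:hdgmethod} for every test function $\boldsymbol{y}_h=(v_h,\bar{v}_h,q_h,\bar{q}_h)$. By linearity of $a_h$ in its second argument, we split the test function as
\[
(v_h,\bar{v}_h,q_h,\bar{q}_h) = \bigl(v_h - l_u(\bar{y}_h),0,\,q_h - l_p(\bar{y}_h),0\bigr) + \bigl(l_u(\bar{y}_h),\bar{v}_h,\,l_p(\bar{y}_h),\bar{q}_h\bigr),
\]
with $\bar{y}_h=(\bar{v}_h,\bar{q}_h)$. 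The identity is then verified separately on the two pieces: a purely local test function and a ``lifted'' face test function.

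\emph{Local piece.} For test functions with vanishing face components, $a_h$ localizes, and one checks directly from \cref{eq:bilinear-forms} that
\[
a_h\bigl((u_h,\bar{u}_h,p_h,\bar{p}_h),(v_h,0,q_h,0)\bigr) = \sum_K \bigl[a_h^K((u_h,p_h),(v_h,q_h)) - \tilde{f}_h^K(v_h)\bigr],
\]
where $\tilde{f}_h^K$ is $f_h^K$ without the source term, with data $(\bar{u}_h,\bar{p}_h)$. Here the terms $-\nu\langle\nabla v_h n,\bar u_h\rangle$, $\nu\eta h_K^{-1}\langle \bar u_h, v_h\rangle$ and $\langle \bar p_h, v_h\cdot n\rangle$ move to the right-hand side. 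The one point needing care is that the face pressure term in $b_h(u_h,(q_h,0))$ vanishes, so $q_h$ only sees $-(q_h,\nabla\cdot u_h)_K$. This matches $a_h^K$, but only if $Q_h$ and $Q(K)$ are compatible with the mean-zero constraint; I would note that the local problem is posed on $Q(K)$ without that constraint, and that the resulting $p_h$ is defined up to what the face pressure fixes. Since the local solvers are linear in their data and $u_h^f,p_h^f$ use data $(0,0,f)$ while $l_u,l_p$ use $(\bar{x}_h,0)$, \cref{eq:local-solver} gives that this local piece equals $(f,v_h-l_u(\bar y_h))_{\mathcal{T}_h}$.

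\emph{Lifted face piece.} Testing with $(l_u(\bar{y}_h),\bar{v}_h,l_p(\bar{y}_h),\bar{q}_h)$, the contribution of $(l_u(\bar{x}_h),\bar u_h,l_p(\bar{x}_h),\bar p_h)$ is exactly $\bar{a}_h(\bar{x}_h,\bar{y}_h)=(f,l_u(\bar y_h))_{\mathcal{T}_h}$. It remains to show that the remaining contribution $a_h((u_h^f,0,p_h^f,0),(l_u(\bar y_h),\bar v_h,l_p(\bar y_h),\bar q_h))$ vanishes. For this I use the symmetry of $a_h$ and rewrite it as $a_h((l_u(\bar y_h),\bar v_h,l_p(\bar y_h),\bar q_h),(u_h^f,0,p_h^f,0))$. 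This test function has zero face parts, so by the local identity and the defining equation of the lift $(l_u,l_p)(\bar y_h)$, whose source is zero, it equals zero.

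Adding the two pieces gives $(f,v_h)_{\mathcal{T}_h}$, which is \cref{eq:hdgmethod}. I expect the main obstacle to be the bookkeeping in the local identity, namely matching every face term of $d_h$ and $b_h$ to $a_h^K$ and $f_h^K$ with the correct signs, together with the pressure-space subtlety. This is exactly the computation in \cite[Lemma 4]{rhebergen2018preconditioning}, with the extra $\tau(u_h,v_h)_K$ term, which is purely local and causes no difficulty.
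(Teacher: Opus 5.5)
Your proposal is correct and follows the standard static-condensation argument that the paper defers to (\cite[Lemma~4]{rhebergen2018preconditioning}): you split the test function into a local part and a lifted face part, use the cellwise identity, and remove $a_h((u_h^f,0,p_h^f,0),\cdot)$ via symmetry of $a_h$ together with the zero-source local problem for the lift. The pressure subtlety you leave vague is resolved as follows: $l_u(0,c)=0$ and $l_p(0,c)=c$ for any constant $c$, so $(0,c)$ lies in the kernel of $\bar{a}_h$ and is annihilated by the right-hand side; hence $\bar{p}_h$ is determined only up to a constant, and choosing that constant so that $p_h\in L^2_0(\Omega)$ places the reconstructed solution in $\boldsymbol{X}_h$.
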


\subsection{Preconditioning the reduced problem}
\label{ss:precon-condensed}

Let $\norm[0]{\cdot}_{\boldsymbol{X}_h}$ be the norm induced by the
inner product defined in \cref{eq:inner-p-X}. By
\cref{thm:cond-precon}, an operator
$\bar{P} : \bar{X}_h \to \bar{X}_h^*$ that defines an inner product on
$\bar{X}_h := \bar{V}_h \times \bar{Q}_h$ is a parameter-robust
preconditioner for the reduced problem \cref{eq:condensed-formulation}
provided there exist uniform constants $c_l, c_u > 0$ such that
\cref{eq:cond-precon-1} holds. In this section we determine an inner
product defining operator $\bar{P}$. In \cref{ss:barP-robust} we will
prove that this operator satisfies the remaining conditions of
\cref{thm:cond-precon}.

We will determine an operator $\bar{P}$ with block diagonal structure
\begin{equation}
  \label{eq:barPP11P22}
  \bar{P} =
  \begin{bmatrix}
    \bar{P}_{11} & 0 \\ 0 & \bar{P}_{22}
  \end{bmatrix},
\end{equation}
where $\bar{P}_{11} : \bar{V}_h \to \bar{V}_h^*$ and
$\bar{P}_{22} : \bar{Q}_h \to \bar{Q}_h^*$. We consider the
$\bar{P}_{11}$ and $\bar{P}_{22}$ blocks separately.

\subsubsection*{The $\bar{P}_{11}$ operator} We introduce the operator
$P^u : \boldsymbol{V}_h \to \boldsymbol{V}_h^*$ such that
\begin{equation*}
  \langle P^u \boldsymbol{u}_h, \boldsymbol{v}_h\rangle_{\boldsymbol{V}_h^*, \boldsymbol{V}_h}
  := (\boldsymbol{u}_h, \boldsymbol{v}_h)_v
  \quad \forall \boldsymbol{u}_h, \boldsymbol{v}_h \in \boldsymbol{V}_h,
\end{equation*}
where $(\cdot, \cdot)_v$ is defined in \cref{eq:inner-p-a}. This
operator has the block form
\begin{equation*}
  P^u  
  = \begin{bmatrix}
    P_{11}^u & (P_{21}^u)^T\\
    P_{21}^u & P_{22}^u
  \end{bmatrix},
\end{equation*}
where $P_{11}^u : V_h \to V_h^*$, $P_{21}^u : V_h \to \bar{V}_h^*$,
and $P_{22}^u : \bar{V}_h \to \bar{V}_h^*$. The Schur complement
operator $S_{P^u} : \bar{V}_h \to \bar{V}_h^*$ of $P^u$ on $\bar{V}_h$
is defined as
\begin{equation*}
  \langle S_{P^u} \bar{u}_h, \bar{v}_h\rangle_{\bar{V}_h^*, \bar{V}_h}
  := \langle (P_{22}^u - P_{21}^u P_{11}^{-1} (P_{21}^u)^T) \bar{u}_h, \bar{v}_h \rangle_{\bar{V}_h^*, \bar{V}_h} 
  \quad \forall \bar{u}_h, \bar{v}_h \in \bar{V}_h,
\end{equation*}
which defines an inner product on $\bar{V}_h$:
\begin{equation}
  \label{eq:innerp-barV}
  (\bar{u}_h, \bar{v}_h)_{\bar{V}_h}
  := \langle S_{P^u} \bar{u}_h, \bar{v}_h\rangle_{\bar{V}_h^*, \bar{V}_h}
  \quad \forall \bar{u}_h, \bar{v}_h \in \bar{V}_h.
\end{equation}
The norm induced by this inner product is denoted by
$\norm[0]{\cdot}_{\bar{V}_h}$. We set $\bar{P}_{11} = S_{P^u}$.

\subsubsection*{The $\bar{P}_{22}$ operator} To determine an operator
$\bar{P}_{22}$ it is possible to follow the same approach as used to
determine $\bar{P}_{11}$, but instead using the inner product
$(\cdot, \cdot)_q$ defined in \cref{eq:inner-p-b}. However,
$(\cdot, \cdot)_q$ is defined as an infimum over the sum of two inner
products on $\boldsymbol{Q}_h$, namely $\nu^{-1}(\cdot, \cdot)_{q,0}$
and $\tau^{-1}(\cdot, \cdot)_{q,1}$. Unfortunately, it is not clear
how to characterize the inverse of the Schur complement on $\bar{Q}_h$
of the operator associated with $(\cdot, \cdot)_q$. This has practical
consequences as the inverse characterization is important for
implementing the preconditioner. We therefore present here an
alternative operator. We first determine two inner products on
$\bar{Q}_h$ associated with the Schur complements of the operators
associated with $\nu^{-1}(\cdot, \cdot)_{q,0}$ and
$\tau^{-1}(\cdot, \cdot)_{q,1}$. We then take the infimum over the sum
of these two inner products. The inverse of the operator associated
with this inner product is characterized in
\cref{ss:characterization-preconds}.

Consider the operators $P^s: \boldsymbol{Q}_h \to \boldsymbol{Q}_h^*$
and $P^d: \boldsymbol{Q}_h \to \boldsymbol{Q}_h^*$ defined as
\begin{equation*}
  \langle P^s \boldsymbol{p}_h, \boldsymbol{q}_h \rangle_{\boldsymbol{Q}_h^*, \boldsymbol{Q}_h}
  := (\boldsymbol{p}_h, \boldsymbol{q}_h)_{q,0}
  \quad \text{ and }
  \quad
  \langle P^d \boldsymbol{p}_h, \boldsymbol{q}_h\rangle_{\boldsymbol{Q}_h^*, \boldsymbol{Q}_h}
  := (\boldsymbol{p}_h, \boldsymbol{q}_h)_{q,1},
\end{equation*}
for all $\boldsymbol{p}_h, \boldsymbol{q}_h \in
\boldsymbol{Q}_h$. These operators have the following block forms:
\begin{equation*}
  P^s
  = \begin{bmatrix}
    P_{11}^s & 0\\
    0 & P_{22}^s
  \end{bmatrix}
  \quad \text{ and } \quad
  P^d
  = \begin{bmatrix}
    P_{11}^d & (P_{21}^d)^T\\
    P_{21}^d & P_{22}^d
  \end{bmatrix},
\end{equation*}
where $P_{11}^s : Q_h \to Q_h^*$,
$P_{22}^s : \bar{Q}_h \to \bar{Q}_h^*$, $P_{11}^d: Q_h \to Q_h^*$,
$P_{21}^d: Q_h \to \bar{Q}_h^*$, and
$P_{22}^d : \bar{Q}_h \to \bar{Q}_h^*$. The Schur complements of $P^s$
and $P^d$ are the operators $S_{P^s}: \bar{Q}_h \to \bar{Q}_h^*$ and
$S_{P^d} : \bar{Q}_h \to \bar{Q}_h^*$ which satisfy
\begin{equation*}
  \begin{split}
    \langle S_{P^s} \bar{p}_h, \bar{q}_h\rangle_{\bar{Q}_h^*, \bar{Q}_h}
    &:= \langle P_{22}^s \bar{p}_h, \bar{q}_h\rangle_{\bar{Q}_h^*, \bar{Q}_h},
    \\
    \langle S_{P^d} \bar{p}_h, \bar{q}_h\rangle_{\bar{Q}_h^*, \bar{Q}_h}
    &:= \langle (P_{22}^d - P_{21}^d (P_{11}^d)^{-1} (P_{21})^T) \bar{p}_h, \bar{q}_h\rangle_{\bar{Q}_h^*, \bar{Q}_h},    
  \end{split}
\end{equation*}
for all $\bar{p}_h, \bar{q}_h \in \bar{Q}_h$. We then define the
following inner product on $\bar{Q}_h$:
\begin{equation}
  \label{eq:innerp-barQ}
  (\bar{p}_h, \bar{q}_h)_{\bar{Q}_h} 
  := \inf_{\substack{\bar{p}_h = \bar{p}_{1,h} + \bar{p}_{2,h}
      \\
      \bar{q}_h = \bar{q}_{1,h} + \bar{q}_{2,h}}}
  \del[1]{ \nu^{-1} \langle S_{P^s} \bar{p}_{h,1}, \bar{q}_{h,1}\rangle_{\bar{Q}_h^*, \bar{Q}_h}
  + \tau^{-1} \langle S_{P^d} \bar{p}_{h,2}, \bar{q}_{h,2}\rangle_{\bar{Q}_h^*, \bar{Q}_h} },
\end{equation}
for all $\bar{p}_h, \bar{q}_h \in \bar{Q}_h$. The norm induced by this
inner product is denoted by $\norm[0]{\cdot}_{\bar{Q}_h}$. We use this
inner product to define the operator
$\bar{P}^p : \bar{Q}_h \to \bar{Q}_h^*$:
\begin{equation*}
  \langle \bar{P}^p \bar{p}_h, \bar{q}_h\rangle_{\bar{Q}_h^*, \bar{Q}_h}
  := (\bar{p}_h, \bar{q}_h)_{\bar{Q}_h} 
  \quad\forall \bar{p}_h, \bar{q}_h \in \bar{Q}_h.
\end{equation*}
We set $\bar{P}_{22} = \bar{P}^p$.

\subsubsection*{The reduced preconditioner $\bar{P}$} Consider the
following inner product on $\bar{X}_h := \bar{V}_h \times \bar{Q}_h$:
\begin{equation}
  \label{eq:inner-barX}
  (\bar{x}_h, \bar{y}_h)_{\bar{X}_h}
  := (\bar{u}_h, \bar{v}_h)_{\bar{V}_h}
  + (\bar{p}_h, \bar{q}_h)_{\bar{Q}_h},
\end{equation}
for all $\bar{x}_h:=(\bar{u}_h,\bar{p}_h) \in \bar{X}_h$ and
$\bar{y}_h:=(\bar{v}_h,\bar{q}_h) \in \bar{X}_h$, with
$(\cdot, \cdot)_{\bar{V}_h}$ defined in \cref{eq:innerp-barV} and
$(\cdot, \cdot)_{\bar{Q}_h}$ defined in \cref{eq:innerp-barQ}. The
norm induced by this inner product is denoted by
$\norm[0]{\cdot}_{\bar{X}_h}$. The reduced preconditioner $\bar{P}$
for \cref{eq:condensed-formulation} is defined by
\begin{equation*}
  \langle \bar{P} \bar{x}_h, \bar{y}_h \rangle_{\bar{X}_h^*,\bar{X}_h}
  =
  (\bar{x}_h, \bar{y}_h)_{\bar{X}_h}
  \quad \forall \bar{x}_h, \bar{y}_h \in \bar{X}_h,
\end{equation*}
which has block structure
\begin{equation}
  \label{eq:barP}
  \bar{P}  
  := \begin{bmatrix}
    S_{P^u} & 0\\
    0 & \bar{P}^p
  \end{bmatrix}.
\end{equation}
In \cref{ss:barP-robust} we show that $\bar{P}$ is a parameter-robust
preconditioner. A characterization of $\bar{P}^{-1}$ is given in
\cref{ss:characterization-preconds}.

\subsection{$\bar{P}$ is a parameter-robust preconditioner}
\label{ss:barP-robust}

We use \cref{thm:cond-precon} to show that $\bar{P}$ is a
parameter-robust preconditioner. This requires showing that
\cref{eq:cond-precon-1a,eq:cond-precon-1b} hold. The following lemma
shows \cref{eq:cond-precon-1a}.

\begin{lemma}
  \label{lem:cond1}
  Let $\norm[0]{\cdot}_{\boldsymbol{X}_h}$ and
  $\norm[0]{\cdot}_{\bar{X}_h}$ be the norms induced by the inner
  products defined in \cref{eq:inner-p-X,eq:inner-barX},
  respectively. Then
  \begin{equation}
    \label{eq:cond1}
    \norm[0]{\boldsymbol{y}_h}_{\boldsymbol{X}_h}
    \geq \norm[0]{\bar{y}_h}_{\bar{X}_h}
    \quad \forall \boldsymbol{y}_h \in \boldsymbol{X}_h.
  \end{equation}
\end{lemma}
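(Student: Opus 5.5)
We split the norm into its velocity and pressure parts and prove each one separately. Since $\norm[0]{\boldsymbol{y}_h}_{\boldsymbol{X}_h}^2 = \tnorm{\boldsymbol{v}_h}_v^2 + \tnorm{\boldsymbol{q}_h}_q^2$ and $\norm[0]{\bar{y}_h}_{\bar{X}_h}^2 = \norm[0]{\bar{v}_h}_{\bar{V}_h}^2 + \norm[0]{\bar{q}_h}_{\bar{Q}_h}^2$, it suffices to show
\[
\tnorm{(v_h,\bar{v}_h)}_v \ge \norm[0]{\bar{v}_h}_{\bar{V}_h}
\qquad\text{and}\qquad
\tnorm{(q_h,\bar{q}_h)}_q \ge \norm[0]{\bar{q}_h}_{\bar{Q}_h}
\]
for all $(v_h,\bar v_h)\in\boldsymbol V_h$ and $(q_h,\bar q_h)\in\boldsymbol Q_h$.

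The basic tool is the minimization property of Schur complements. Let $P = \begin{bmatrix} P_{11} & P_{21}^T\\ P_{21} & P_{22}\end{bmatrix}$ be symmetric positive definite, with $P_{11}$ invertible. Then for every $(x,\bar x)$,
\[
\langle P(x,\bar x),(x,\bar x)\rangle = \langle P_{11}(x + P_{11}^{-1}P_{21}^T\bar x), x + P_{11}^{-1}P_{21}^T\bar x\rangle + \langle S_P\bar x,\bar x\rangle \ge \langle S_P \bar x,\bar x\rangle .
\]
This is the same block factorization already used in the proof of \cref{thm:cond-precon}, with equality at $x=-P_{11}^{-1}P_{21}^T\bar x$. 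Applying it to $P^u$ gives $\tnorm{\boldsymbol{v}_h}_v^2 = \langle P^u\boldsymbol v_h,\boldsymbol v_h\rangle \ge \langle S_{P^u}\bar v_h,\bar v_h\rangle = \norm[0]{\bar v_h}_{\bar V_h}^2$. We will also need that $P^u_{11}$ is positive definite, which holds because $\tau(\cdot,\cdot)_{\mathcal{T}_h}$ appears in $(\cdot,\cdot)_v$.

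For the pressure, fix an arbitrary splitting $\boldsymbol q_h = \boldsymbol q_{1,h}+\boldsymbol q_{2,h}$ with $\boldsymbol q_{i,h}=(q_{i,h},\bar q_{i,h})$. The face components then split as $\bar q_h = \bar q_{1,h}+\bar q_{2,h}$.
\begin{itemize}
\item Applying the Schur complement inequality to $P^s$ gives $\tnorm{\boldsymbol q_{1,h}}_{q,0}^2 \ge \langle S_{P^s}\bar q_{1,h},\bar q_{1,h}\rangle$. This is trivial here because $P^s$ is block diagonal: the term $(q_{1,h},q_{1,h})_{\mathcal{T}_h}$ is simply dropped.
\item Applying it to $P^d$ gives $\tnorm{\boldsymbol q_{2,h}}_{q,1}^2 \ge \langle S_{P^d}\bar q_{2,h},\bar q_{2,h}\rangle$.
\end{itemize}
Hence
\[
\nu^{-1}\tnorm{\boldsymbol q_{1,h}}_{q,0}^2+\tau^{-1}\tnorm{\boldsymbol q_{2,h}}_{q,1}^2 \ge \nu^{-1}\langle S_{P^s}\bar q_{1,h},\bar q_{1,h}\rangle+\tau^{-1}\langle S_{P^d}\bar q_{2,h},\bar q_{2,h}\rangle \ge \norm[0]{\bar q_h}_{\bar Q_h}^2,
\]
where the last step holds because $(\bar q_{1,h},\bar q_{2,h})$ is an admissible face splitting in \cref{eq:innerp-barQ}. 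Taking the infimum over all cell-and-face splittings on the left yields $\tnorm{\boldsymbol q_h}_q^2\ge\norm[0]{\bar q_h}_{\bar Q_h}^2$. Summing the velocity and pressure estimates proves \cref{eq:cond1}.

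The main obstacle is the invertibility of $P^d_{11}$. The form $(\nabla p_h,\nabla q_h)_{\mathcal{T}_h}+\eta\langle h_K^{-1}p_h,q_h\rangle_{\partial\mathcal{T}_h}$ is positive definite on the cell space, since a cellwise constant with zero boundary trace vanishes, so the Schur complement $S_{P^d}$ is well defined. A related subtlety concerns the zero-mean constraint on $Q_h$. The Schur complement identity only uses the algebraic block structure over the product space $Q_h\times\bar Q_h$, so the argument should hold provided $S_{P^d}$ is defined with $Q_h$ as the cell space. I would state this assumption explicitly and not treat it further.
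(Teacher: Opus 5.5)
Your proposal is correct and follows essentially the same argument as the paper: the completing-the-square (Schur complement) identity applied to $P^u$, $P^s$ and $P^d$, followed by the observation that every cell-and-face splitting of $\boldsymbol{q}_h$ induces an admissible face splitting of $\bar{q}_h$, so the infimum defining $\tnorm{\boldsymbol{q}_h}_q^2$ dominates $\norm[0]{\bar{q}_h}_{\bar{Q}_h}^2$. Your remarks on the positivity of $P^d_{11}$ and on the zero-mean constraint are valid and only make explicit what the paper takes for granted; no extra assumption is needed, since the paper already defines $P^d_{11}$ on $Q_h$.
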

\begin{proof}
  Let
  $\boldsymbol{y}_h := (\boldsymbol{v}_h, \boldsymbol{q}_h) \in
  \boldsymbol{X}_h$. Then
  \begin{equation}
    \label{eq:cond1-aux1}
    \begin{split}      
      \norm[0]{\boldsymbol{v}_h}_{v}^2
      =& \langle P_{11}^u(v_h 
         + (P_{11}^u)^{-1} (P_{21}^u)^T \bar{v}_h),  v_h 
         + (P_{11}^u)^{-1} (P_{21}^u)^T \bar{v}_h\rangle_{V_h^*, V_h}
         + \langle S_{P^u} \bar{v}_h, \bar{v}_h \rangle_{\bar{V}_h^*, \bar{V}_h}
      \\
      \geq& \langle S_{P^u} \bar{v}_h, \bar{v}_h \rangle_{\bar{V}_h^*, \bar{V}_h} 
            = \norm[0]{\bar{v}_h}_{\bar{X}_h}^2,      
    \end{split}
  \end{equation}
  where we used that $P_{11}^u$ is a symmetric positive
  operator. Similarly,
  \begin{align*}
    \langle P^s \boldsymbol{q}_{h}, \boldsymbol{q}_{h}\rangle_{\boldsymbol{Q}_h^*, \boldsymbol{Q}_h}
    =& \langle P_{11}^sq_{h},  q_h \rangle_{Q_h^*, Q_h}
       + \langle S_{P^s} \bar{q}_h, \bar{q}_h \rangle_{\bar{Q}_h^*, \bar{Q}_h}
    \\
    \ge & \langle S_{P^s} \bar{q}_h, \bar{q}_h \rangle_{\bar{Q}_h^*, \bar{Q}_h},
    \\
    \langle P^d \boldsymbol{q}_{h}, \boldsymbol{q}_{h}\rangle_{\boldsymbol{Q}_h^*, \boldsymbol{Q}_h}
    =& \langle P_{11}^d(q_h 
         + (P_{11}^d)^{-1} (P_{21}^d)^T \bar{q}_h),  q_h 
         + (P_{11}^d)^{-1} (P_{21}^d)^T \bar{q}_h\rangle_{Q_h^*, Q_h}
       + \langle S_{P^d} \bar{q}_h, \bar{q}_h \rangle_{\bar{Q}_h^*, \bar{Q}_h}
    \\
    \ge & \langle S_{P^d} \bar{q}_h, \bar{q}_h \rangle_{\bar{Q}_h^*, \bar{Q}_h},
  \end{align*}
  so that
  \begin{align}
    \nonumber
    \norm[0]{\boldsymbol{q}_h}_q^2
    &= \inf_{\boldsymbol{q}_h = \boldsymbol{q}_{1,h} + \boldsymbol{q}_{2,h}}
      \big( \nu^{-1} \langle P^s \boldsymbol{q}_{1,h}, \boldsymbol{q}_{1,h}\rangle_{\boldsymbol{Q}_h^*, \boldsymbol{Q}_h} 
      + \tau^{-1} \langle P^d \boldsymbol{q}_{2,h}, \boldsymbol{q}_{2,h}\rangle_{\boldsymbol{Q}_h^*, \boldsymbol{Q}_h} 
      \big)
    \\
    \nonumber
    & \geq \inf_{\bar{q}_h = \bar{q}_{1,h} + \bar{q}_{2,h}}
      \big( \nu^{-1} \langle S_{P^s}\bar{q}_{1,h}, \bar{q}_{1,h} \rangle_{\bar{Q}_h^*, \bar{Q}_h}
      + \tau^{-1} \langle S_{P^d}\bar{q}_{2,h}, \bar{q}_{2,h} \rangle_{\bar{Q}_h^*, \bar{Q}_h}\big)
    \\
    \label{eq:cond1-aux2}
    & = \norm[0]{\bar{q}_h}_{\bar{Q}_h}^2.
  \end{align}
  The result follows by combining \cref{eq:cond1-aux1,eq:cond1-aux2}.
\end{proof}

The remainder of this section is devoted to proving
\cref{eq:cond-precon-1b}. To this end, we first present some
preliminary results after which \cref{eq:cond-precon-1b} is proven in
\cref{lem:cond2-th1}. It will be useful to define the following norm
on $\bar{V}_h$:
\begin{equation}
  \label{eq:facet-norm}
  \tnorm{\bar{v}_h}_{v,h}^2 
  := \norm[0]{h_K^{-1/2}(\bar{v}_h - m_K(\bar{v}_h))}_{\partial \mathcal{T}_h}^2
  \qquad \forall \bar{v}_h \in \bar{V}_h,  
\end{equation}
where
$m_K(\bar{v}_h) := |\partial K|^{-1}\int_{\partial K} \bar{v}_h \dif
s$ for $K \in \mathcal{T}_h$.

\begin{lemma}
  \label{lem:bound-qnorm}
  There exists a uniform constant $c > 0$ such that
  \begin{equation}
    \label{eq:bound-qnorm-1}
    \tnorm{(l_p(\bar{v}_h, \bar{q}_h), \bar{q}_h)}_q^2
    \leq c \eta^2 \big( \norm[0]{\bar{q}_h}_{\bar{Q}_h}^2
    + \tnorm{(l_u(\bar{v}_h, \bar{q}_h), \bar{v}_h)}_v^2 \big)
    \quad \forall (\bar{v}_h, \bar{q}_h) \in \bar{V}_h \times \bar{Q}_h.
  \end{equation}
\end{lemma}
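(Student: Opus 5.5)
We plan to bound $\tnorm{(p_h,\bar q_h)}_q$, where $p_h := l_p(\bar v_h,\bar q_h)$ and $u_h := l_u(\bar v_h,\bar q_h)$. The idea is to exhibit a good splitting of $(p_h,\bar q_h)$ built from a near-optimal splitting of $\bar q_h$, using the local problem \cref{eq:local-solver} to control the cell part. Fix $\bar q_h=\bar q_{1,h}+\bar q_{2,h}$ attaining the infimum in \cref{eq:innerp-barQ}. For the second piece take $q_{2,h}:=-(P^d_{11})^{-1}(P^d_{21})^T\bar q_{2,h}$, the $(\cdot,\cdot)_{q,1}$-harmonic extension. Then
\[
\tau^{-1}\tnorm{(q_{2,h},\bar q_{2,h})}_{q,1}^2=\tau^{-1}\langle S_{P^d}\bar q_{2,h},\bar q_{2,h}\rangle .
\]
Define the first piece as the remainder $(p_h-q_{2,h},\bar q_{1,h})$. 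Its $q,0$ norm is
\[
\nu^{-1}\big(\norm{p_h-q_{2,h}}_{\mathcal T_h}^2+\eta^{-1}\norm{h_K^{1/2}\bar q_{1,h}}_{\partial\mathcal T_h}^2\big).
\]
The face part equals $\nu^{-1}\langle S_{P^s}\bar q_{1,h},\bar q_{1,h}\rangle$. So everything reduces to bounding the cell quantity $\nu^{-1/2}\norm{p_h-q_{2,h}}_{\mathcal T_h}$, or more flexibly, splitting this cell remainder once more between the two norms.

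To control the cell pressure we will use a local inf-sup argument on each $K$. Since the global $L^2_0$ constraint is absent in $Q(K)$ (apart from the mean, which is handled via the face terms), we will construct test functions $v\in V(K)$ with $(\nabla\cdot v,\cdot)$ dual to $p_h-q_{2,h}$. Testing \cref{eq:local-solver} with $v$ gives $(p_h,\nabla\cdot v)_K$ in terms of:
\begin{itemize}
\item $\tau(u_h,v)_K+\nu(\nabla u_h,\nabla v)_K$ and the penalty/consistency terms with $u_h-\bar v_h$, all controlled by $\tnorm{(u_h,\bar v_h)}_v$ times the appropriate norm of $v$;
\item $\langle\bar q_h,v\cdot n\rangle_{\partial K}$.
\end{itemize}
Rewriting $(p_h,\nabla\cdot v)_K=-(\nabla p_h,v)_K+\langle p_h,v\cdot n\rangle_{\partial K}$ yields the identity
\[
b_h^K(v,(p_h,\bar q_h))=\text{velocity terms}.
\]
Hence the global $b_h$ of a suitable $\boldsymbol v$ against $(p_h,\bar q_h)$ is bounded by $\tnorm{(u_h,\bar v_h)}_v\,\tnorm{\boldsymbol v}_v$, up to $\eta$-dependent constants coming from the penalty term, which is the source of the factor $\eta^2$.

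The cleanest route is therefore to invoke \cref{thm:infsupbh} directly rather than a local argument. It gives $\boldsymbol w_h\in\boldsymbol V_h$ with $b_h(w_h,(p_h,\bar q_h))\ge c_3\tnorm{(p_h,\bar q_h)}_q\tnorm{\boldsymbol w_h}_v$, and we will split $b_h(w_h,(p_h,\bar q_h))$ accordingly. The obstacle is that the local equations only hold elementwise with test functions in $V(K)$ and no face test function: summing them yields $b_h(w_h,(p_h,\bar q_h))$ plus velocity terms in which $\bar w_h$ is replaced by $0$. The mismatch is exactly the face terms $\nu\langle \nabla u_h n-\eta h_K^{-1}(u_h-\bar v_h),\bar w_h\rangle$, which are again bounded by $\eta\tnorm{(u_h,\bar v_h)}_v\tnorm{\boldsymbol w_h}_v$ using discrete trace inequalities. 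The remaining problem is the term $\langle\bar q_h, w_h\cdot n\rangle$. It does not appear in the velocity residual, because $b_h(w_h,(0,\bar q_h))$ is part of $b_h$ itself; this is why the $\norm{\bar q_h}_{\bar Q_h}$ term is needed.

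We will handle it by writing $(p_h,\bar q_h)=(p_h-q_{2,h},0)+(q_{2,h},\bar q_{2,h})+(0,\bar q_{1,h})$. The last two pieces have $\tnorm{\cdot}_q$ bounded by $\norm{\bar q_h}_{\bar Q_h}$ via the construction above, and by \cref{lem:boundedness}-type bounds their $b_h$ against $w_h$ is at most $\norm{\bar q_h}_{\bar Q_h}\tnorm{\boldsymbol w_h}_v$. For the first piece we apply \cref{thm:infsupbh} to $(p_h-q_{2,h},0)$. Combining everything gives
\[
\tnorm{(p_h-q_{2,h},0)}_q\lesssim\eta\big(\tnorm{(u_h,\bar v_h)}_v+\norm{\bar q_h}_{\bar Q_h}\big),
\]
and the triangle inequality for $\tnorm{\cdot}_q$ completes the proof with constant $c\eta^2$. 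The key step to verify carefully is the summation of the local equations and the exact form of the face mismatch.
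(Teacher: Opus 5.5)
Your overall setup is sound. The three-piece splitting $(p_h,\bar q_h)=(p_h-q_{2,h},0)+(q_{2,h},\bar q_{2,h})+(0,\bar q_{1,h})$ built from an optimal splitting of $\bar q_h$ is fine, as are the bounds on the last two pieces. The identity writing $b_h(w_h,(p_h,\bar q_h))$ as a velocity residual plus the face mismatch $\nu\langle \nabla u_h n-\eta h_K^{-1}(u_h-\bar v_h),\bar w_h\rangle_{\partial\mathcal{T}_h}$ is also correct.

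The gap is your claim that this mismatch is bounded by $\eta\tnorm{(u_h,\bar v_h)}_v\tnorm{\boldsymbol{w}_h}_v$. Cauchy--Schwarz and trace inequalities leave you with $\nu^{1/2}\norm[0]{h_K^{-1/2}\bar w_h}_{\partial\mathcal{T}_h}$. The norm $\tnorm{\boldsymbol{w}_h}_v$ controls this only up to a term $\nu^{1/2}h^{-1}\norm[0]{w_h}_{\mathcal{T}_h}$, which is harmless when $\nu\lesssim\tau h^2$ but not when $\tau h^2\lesssim\nu$. In that regime the bound is actually false. Testing \cref{eq:local-solver} with a constant $e\in V(K)$ gives
\begin{equation*}
  \nu\langle \eta h_K^{-1}(u_h-\bar v_h)-\nabla u_h n, e\rangle_{\partial K}
  = -\tau(u_h,e)_K-\langle \bar q_h, e\cdot n\rangle_{\partial K}.
\end{equation*}
Hence, after writing $\bar w_h=(\bar w_h-m_K(\bar w_h))+m_K(\bar w_h)$, the mismatch contains $\sum_K\langle\bar q_h,m_K(\bar w_h)\cdot n\rangle_{\partial K}$, so it depends on $\bar q_h$ and not only on $u_h$. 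For example, take $\nu=\tau=1$, $\bar v_h=0$, $\bar q_h=x_1$ on $\Gamma_0$, and $\boldsymbol{w}_h$ interpolating a smooth field with $\int_\Omega w_1\dif x\neq0$. Then $\tnorm{(u_h,0)}_v=O(h)$, while the mismatch equals $\int_\Omega w_1\dif x+O(h)$. Your remark that $\bar q_h$ does not enter the velocity residual is exactly where the argument breaks.

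The missing idea is the paper's treatment of the elementwise constants $m_K(\bar w_h)$. The part $\bar w_h-m_K(\bar w_h)$ is controlled by $\tnorm{\boldsymbol{w}_h}_{v,1}$ via \cref{eq:facet-norm-bound}. For the constants, the paper tests the local problems twice:
\begin{itemize}
\item with $\tilde v_h-m_K(\tilde{\bar v}_h)$, which produces $\tau\norm[0]{m_K}^2$ terms and a $\langle\bar q_h,m_K\cdot n\rangle$ term; the latter is handled through the splitting of $\bar q_h$ and integration by parts against $\tilde l_p(\bar q_{2,h})$;
\item with $\tilde v_h$ itself, which produces $\nu\norm[0]{h_K^{-1}m_K}^2$ terms.
\end{itemize}
Combining the two leaves $\sum_K\min\{\tau,\nu h_K^{-2}\}\norm[0]{m_K(\tilde{\bar v}_h)}_K^2$, which \cite[Lemma A.2]{henriquez2025parameter} bounds by $\tnorm{\tilde{\boldsymbol v}_h}_v^2$.

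Your decomposition can be repaired the same way, and a bit more cleanly. Since $\nabla\cdot m_K(\bar w_h)=0$ on each $K$, you have $b_h(w_h,(p_h-q_{2,h},0))=b_h(w_h-m_K(\bar w_h),(p_h-q_{2,h},0))$. So when $\tau h^2\lesssim\nu$ you can sum the local problems tested with $w_h-m_K(\bar w_h)$. The $\langle\bar q_h,m_K\cdot n\rangle$ term then never appears explicitly. Its role is taken by $b_h(w_h-m_K(\bar w_h),\cdot)$ acting on the two face pieces, which are bounded using \cref{eq:facet-norm-bound} and $\tau^{1/2}\norm[0]{m_K(\bar w_h)}_{\mathcal{T}_h}$; the latter is controlled by the same Lemma A.2. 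When $\nu\lesssim\tau h^2$, your unshifted argument works as written.
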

\begin{proof}
  For sake of notation, we will write
  $l_p := l_p(\bar{v}_h, \bar{q}_h) $ and
  $l_u := l_u(\bar{v}_h, \bar{q}_h) $. By \cref{eq:infsup3}, given
  $(l_p, \bar{q}_h) \in \boldsymbol{Q}_h$, there exists
  $\tilde{\boldsymbol{v}}_h \in \boldsymbol{V}_h$ such that
  \begin{equation}
    \label{eq:bound-qnorm-aux1}
    b_h(\tilde{v}_h, (l_p, \bar{q}_h))
    = \tnorm{(l_p, \bar{q}_h)}_q^2
    \quad \text{ and } \quad
    \tnorm{\tilde{\boldsymbol{v}}_h}_v 
    \leq c_3^{-1} \tnorm{(l_p, \bar{q}_h)}_q.
  \end{equation}
  \textbf{Step 1.} Choose
  $v_h = \tilde{v}_h - m_K(\tilde{\bar{v}}_h)$, $q_h = 0$, and $s = 0$
  in \cref{eq:local-solver}, then
  \begin{align*}
    &\tau (l_u, \tilde{v}_h - m_K(\tilde{\bar{v}}_h))_K
      + \nu (\nabla l_u, \nabla \tilde{v}_h)_K
      + \nu \eta h_K^{-1} \langle l_u, \tilde{v}_h - m_K(\tilde{\bar{v}}_h) \rangle_{\partial K}
    \\
    & - \nu \langle \nabla l_u n, \tilde{v}_h - m_K(\tilde{\bar{v}}_h) \rangle_{\partial K}
      - \nu \langle \nabla \tilde{v}_h n, l_u \rangle_{\partial K}
      - (l_p, \nabla \cdot \tilde{v}_h)_{K}
    \\
    & = 
      - \nu \langle \nabla \tilde{v}_h n, \bar{v}_h \rangle_{\partial K}
      + \nu \eta h_K^{-1} \langle \bar{v}_h, \tilde{v}_h - m_K(\tilde{\bar{v}}_h) \rangle_{\partial K}
      - \langle \bar{q}_h, (\tilde{v}_h - m_K(\tilde{\bar{v}}_h))\cdot n \rangle_{\partial K}.
  \end{align*}
  Rearranging, summing over all elements, and using the equality in
  \cref{eq:bound-qnorm-aux1}, we obtain
  \begin{equation}
    \label{eq:bound-qnorm-aux2}
    \begin{split}
      \tnorm{(l_p, \bar{q}_h)}_q^2 
      =& \langle \bar{q}_h, m_K(\tilde{\bar{v}}_h) \cdot n \rangle_{\partial \mathcal{T}_h}
         - \tau ( l_u, \tilde{v}_h - m_K(\tilde{\bar{v}}_h) )_{\mathcal{T}_h}
         - \nu (\nabla l_u, \nabla \tilde{v}_h)_{\mathcal{T}_h}
      \\
       & - \nu \eta \langle h_K^{-1} (l_u - \bar{v}_h), \tilde{v}_h 
         - \tilde{\bar{v}}_h \rangle_{\partial \mathcal{T}_h}
         - \nu \eta \langle h_K^{-1} (l_u - \bar{v}_h), \tilde{\bar{v}}_h
         - m_K(\tilde{\bar{v}}_h) \rangle_{\partial \mathcal{T}_h}
      \\
       & + \nu \langle \nabla l_u n, \tilde{v}_h - \tilde{\bar{v}}_h \rangle_{\partial \mathcal{T}_h}
         + \nu \langle \nabla l_u n, \tilde{\bar{v}}_h - m_K(\tilde{\bar{v}}_h) \rangle_{\partial \mathcal{T}_h}
      \\
       & + \nu \langle \nabla \tilde{v}_h n, l_u - \bar{v}_h \rangle_{\partial \mathcal{T}_h}.      
    \end{split}
  \end{equation}
  Consider the first term on the right hand side of
  \cref{eq:bound-qnorm-aux2}. Let $\tilde{l}_p(\cdot)$ be defined in
  \cref{lem:redauxproblem}. Taking an arbitrary splitting
  $\bar{q}_h = \bar{q}_{1,h} + \bar{q}_{2,h} \in \bar{Q}_h$, we find
  \begin{equation*}
    \begin{split}
      \langle \bar{q}_h, m_K(\tilde{\bar{v}}_h) \cdot n \rangle_{\partial \mathcal{T}_h}
      =& \langle \bar{q}_{1,h}, m_K(\tilde{\bar{v}}_h) \cdot n\rangle_{\partial \mathcal{T}_h}
         + \langle \bar{q}_{2,h}, m_K(\tilde{\bar{v}}_h) \cdot n\rangle_{\partial \mathcal{T}_h}
      \\
      =& \langle \bar{q}_{1,h}, (m_K(\tilde{\bar{v}}_h) - \tilde{\bar{v}}_h) \cdot n\rangle_{\partial \mathcal{T}_h}
         + \langle \bar{q}_{2,h} - \tilde{l}_p(\bar{q}_{2,h}), m_K(\tilde{\bar{v}}_h) \cdot n\rangle_{\partial \mathcal{T}_h}
      \\
       & + (\nabla \tilde{l}_p(\bar{q}_{2,h}), m_K(\tilde{\bar{v}}_h))_{\mathcal{T}_h},      
    \end{split}
  \end{equation*}
  where the second equality follows because $\tilde{\bar{v}}_h$ is
  single-valued on interior faces, $\tilde{\bar{v}}_h = 0$ on
  $\partial \Omega$,
  $ - (\tilde{l}_p (\bar{q}_{2,h}), \nabla \cdot
  m_K(\tilde{\bar{v}}_h))_{\mathcal{T}_h} = 0$, and integration by
  parts. Using the Cauchy--Schwarz and Young's inequalities and a
  discrete trace inequality with uniform constant $c_{tr} > 0$, we
  find
  \begin{equation}
    \label{eq:bound-qnorm-aux3}
    \begin{split}
      \langle \bar{q}_h, m_K(\tilde{\bar{v}}_h) \cdot n \rangle_{\partial \mathcal{T}_h}
      \leq &
             \frac{\varepsilon_1 \nu^{-1} }{2}  \norm[0]{h_K^{1/2} \bar{q}_{1,h}}_{\partial \mathcal{T}_h}^2
             + \frac{\nu}{2 \varepsilon_1} \norm[0]{h_K^{-1/2} (m_K(\tilde{\bar{v}}_h) - \tilde{\bar{v}}_h)}_{\partial \mathcal{T}_h}^2
      \\
           & + \frac{\varepsilon_2 c_{tr}^2 \tau^{-1}}{2} \norm[0]{h_K^{-1/2} (\bar{q}_{2,h} - \tilde{l}_p(\bar{q}_{2,h}))}_{\partial \mathcal{T}_h}^2
             + \frac{\tau}{2 \varepsilon_2} \norm[0]{m_K(\tilde{\bar{v}}_h)}_{\mathcal{T}_h}^2
      \\
           & + \frac{\varepsilon_3 \tau^{-1}}{2} \norm[0]{\nabla \tilde{l}_p (\bar{q}_{2,h})}_{\mathcal{T}_h}^2
             + \frac{\tau}{2 \varepsilon_3} \norm[0]{m_K(\tilde{\bar{v}}_h)}_{\mathcal{T}_h}^2,
    \end{split}
  \end{equation}
  where $\varepsilon_1,\varepsilon_2$ and $\varepsilon_3$ are positive
  constants that will be chosen later. Combining
  \cref{eq:bound-qnorm-aux2,eq:bound-qnorm-aux3}, again using the
  Cauchy--Schwarz and Young's inequalities and a discrete trace
  inequality, we obtain
  \begin{align*}
    \tnorm{(l_p, \bar{q}_h)}_q^2
    \leq& \frac{\varepsilon_1 \nu^{-1}}{2} \norm[0]{h_K^{1/2} \bar{q}_{1,h}}_{\partial \mathcal{T}_h}^2
          + \frac{\nu}{2 \varepsilon_1} \norm[0]{h_K^{-1/2} (m_K(\tilde{\bar{v}}_h) - \tilde{\bar{v}}_h)}_{\partial \mathcal{T}_h}^2
    \\
        & + \frac{\varepsilon_2 c_{tr}^2 \tau^{-1}}{2} \norm[0]{h_K^{-1/2} (\bar{q}_{2,h} - \tilde{l}_p(\bar{q}_{2,h}))}_{\partial \mathcal{T}_h}^2
          + \frac{\tau}{2 \varepsilon_2} \norm[0]{m_K(\tilde{\bar{v}}_h)}_{\mathcal{T}_h}^2
    \\
        & + \frac{\varepsilon_3 \tau^{-1}}{2} \norm[0]{\nabla \tilde{l}_p(\bar{q}_{2,h})}_{\mathcal{T}_h}^2
          + \frac{\tau}{2 \varepsilon_3} \norm[0]{m_K(\tilde{\bar{v}}_h)}_{\mathcal{T}_h}^2
          + \frac{\varepsilon_4 \tau}{2} \norm[0]{l_u}_{\mathcal{T}_h}^2
          + \frac{\tau}{2 \varepsilon_4} \norm[0]{\tilde{v}_h}_{\mathcal{T}_h}^2
    \\
        & + \frac{\varepsilon_5 \tau}{2} \norm[0]{l_u}_{\mathcal{T}_h}^2
          + \frac{\tau}{2 \varepsilon_5} \norm[0]{m_K(\tilde{\bar{v}}_h)}_{\mathcal{T}_h}^2
          + \frac{\varepsilon_6 \nu}{2} \norm[0]{\nabla l_u}_{\mathcal{T}_h}^2
          + \frac{\nu}{2 \varepsilon_6} \norm[0]{\nabla \tilde{v}_h}_{\mathcal{T}_h}^2
    \\
        & + \frac{\varepsilon_7 \nu \eta}{2} \norm[0]{h_K^{-1/2} (l_u - \bar{v}_h)}_{\partial \mathcal{T}_h}^2
          + \frac{\nu \eta}{2 \varepsilon_7} \norm[0]{h_K^{-1/2} (\tilde{v}_h - \tilde{\bar{v}}_h)}_{\partial \mathcal{T}_h}^2
    \\
        & + \frac{\varepsilon_8 \nu \eta^2}{2} \norm[0]{h_K^{-1/2} (l_u - \bar{v}_h)}_{\partial \mathcal{T}_h}^2
          + \frac{\nu}{2 \varepsilon_8} \norm[0]{h_K^{-1/2}(\tilde{\bar{v}}_h - m_K(\tilde{\bar{v}}_h))}_{\partial \mathcal{T}_h}^2
    \\
        & + \frac{\varepsilon_9 c_{tr}^2 \nu}{2} \norm[0]{\nabla l_u}_{\mathcal{T}_h}^2
          + \frac{\nu}{2 \varepsilon_9} \norm[0]{h_K^{-1/2} (\tilde{v}_h - \tilde{\bar{v}}_h)}_{\partial \mathcal{T}_h}^2
    \\
        & + \frac{\varepsilon_{10} c_{tr}^2 \nu}{2} \norm[0]{\nabla l_u}_{\mathcal{T}_h}^2
          + \frac{\nu}{2 \varepsilon_{10}} \norm[0]{h_K^{-1/2} (\tilde{\bar{v}}_h - m_K(\tilde{\bar{v}}_h))}_{\partial \mathcal{T}_h}^2
    \\
        & + \frac{\varepsilon_{11} c_{tr}^2 \nu}{2} \norm[0]{h_K^{-1/2} (l_u - \bar{v}_h)}_{\partial \mathcal{T}_h}^2 
          + \frac{\nu}{2 \varepsilon_{11}} \norm[0]{\nabla \tilde{v}_h}_{\mathcal{T}_h}^2,
  \end{align*}
  where $\varepsilon_4, \ldots, \varepsilon_{11}$ are positive
  constants that will be chosen later. Applying
  \cref{eq:bound-qnorm-aux1,eq:facet-norm-bound} together with the
  definitions of the norms in \cref{eq:inner-p,eq:facet-norm}, we find
  \begin{align*}
    \tnorm{(l_p, \bar{q}_h)}_q^2
    \leq& \frac{\varepsilon_1 \nu^{-1}}{2} \norm[0]{h_K^{1/2} \bar{q}_{1,h}}_{\partial \mathcal{T}_h}^2
          + \del[2]{\frac{\varepsilon_3}{2} 
          + \frac{\varepsilon_2 c_{tr}^2 }{2}} \tau^{-1} \tnorm{(\tilde{l}_p(\bar{q}_{2,h}),\bar{q}_{2,h})}_{q,1}^2
    \\
        & + \del[2]{\frac{\bar{c}^2}{2 \varepsilon_1}
          + \frac{1}{2 \varepsilon_4}
          + \frac{1}{2 \varepsilon_6}
          + \frac{1}{2 \varepsilon_7}
          + \frac{\bar{c}^2}{2 \varepsilon_8}
          + \frac{1}{2 \varepsilon_9}
          + \frac{\bar{c}^2}{2 \varepsilon_{10}}
          + \frac{1}{2 \varepsilon_{11}} } c_3^{-2} \tnorm{(l_p, \bar{q}_h)}_q^2
    \\
        & + \del[2]{\frac{\varepsilon_4}{2}
          + \frac{\varepsilon_5}{2}
          + \frac{\varepsilon_6}{2}
          + \frac{\varepsilon_7}{2}
          + \frac{\varepsilon_8 \eta}{2}
          + \frac{\varepsilon_9 c_{tr}^2}{2}
          + \frac{\varepsilon_{10} c_{tr}^2}{2}
          + \frac{\varepsilon_{11} c_{tr}^2}{2}} \tnorm{(l_u,\bar{v}_h)}_v^2
    \\
        & + \del[2]{\frac{1}{2 \varepsilon_2} 
          + \frac{1}{2 \varepsilon_3} 
          + \frac{1}{2 \varepsilon_5} } \tau \norm[0]{m_K(\tilde{\bar{v}}_h)}_{\mathcal{T}_h}^2.
  \end{align*}
  Then, choosing
  $\varepsilon_1 = \varepsilon_8 = \varepsilon_{10} = 5 \bar{c}^2
  c_3^{-2}$, $\varepsilon_2 = \varepsilon_3 = \varepsilon_5$, and
  $ \varepsilon_4 = \varepsilon_6 = \varepsilon_7 = \varepsilon_9 =
  \varepsilon_{11} = 5 c_3^{-2}$, and using that $\eta > 1$, we find
  that there exists a positive constant $c_1'$ such that
  \begin{multline}
    \label{eq:bound-qnorm-aux5}
    \tnorm{(l_p, \bar{q}_h)}_q^2
    \leq c_1'\eta \big(\nu^{-1}\eta^{-1} \norm[0]{h_K^{1/2} \bar{q}_{1,h}}_{\partial \mathcal{T}_h}^2
    + \varepsilon_2 \tau^{-1} \tnorm{(\tilde{l}_p(\bar{q}_{2,h}),\bar{q}_{2,h})}_{q,1}^2
    \\
    + (\varepsilon_2 + 1) \tnorm{(l_u,\bar{v}_h)}_v^2
    + \varepsilon_2^{-1} \tau \norm[0]{m_K(\tilde{\bar{v}}_h)}_{\mathcal{T}_h}^2\big).
  \end{multline}
  \textbf{Step 2.} Now choose $v_h = \tilde{v}_h$, $q_h = 0$, and
  $s = 0$ in \cref{eq:local-solver}. We obtain
  \begin{align*}
    & \tau (l_u, \tilde{v}_h)_K
      + \nu (\nabla l_u, \nabla \tilde{v}_h)_K
      + \nu \eta h_K^{-1} \langle l_u, \tilde{v}_h\rangle_{\partial K}
    \\
    & - \nu \langle \nabla l_u n, \tilde{v}_h\rangle_{\partial K}
      - \nu \langle \nabla \tilde{v}_h n, l_u \rangle_{\partial K}
      - (l_p, \nabla \cdot \tilde{v}_h)_K
    \\
    =& - \nu \langle \nabla \tilde{v}_h n, \bar{v}_h \rangle_{\partial K}
       + \nu \eta h_K^{-1} \langle \bar{v}_h, \tilde{v}_h \rangle_{\partial K}
       - \langle \bar{q}_h, \tilde{v}_h \cdot n\rangle_{\partial K}.
  \end{align*}
  Summing over all elements and performing some algebraic
  rearrangements, we obtain
  \begin{align*}
    & \tnorm{(l_p, \bar{q}_h)}_q^2
    \\
    =& - \tau (l_u, \tilde{v}_h)_{\mathcal{T}_h} - \nu (\nabla l_u, \nabla \tilde{v}_h)_{\mathcal{T}_h}
       + \nu \langle \nabla l_u n, \tilde{v}_h  - \tilde{\bar{v}}_h \rangle_{\partial \mathcal{T}_h}
       + \nu \langle \nabla l_u n, \tilde{\bar{v}}_h - m_K(\tilde{\bar{v}}_h) \rangle_{\partial \mathcal{T}_h}
    \\
    & + \nu \langle \nabla l_u n, m_K(\tilde{\bar{v}}_h) \rangle_{\partial \mathcal{T}_h}
      - \nu \eta \langle h_K^{-1} (l_u - \bar{v}_h), \tilde{v}_h - \tilde{\bar{v}}_h\rangle_{\partial \mathcal{T}_h}
    \\
    & - \nu \eta \langle h_K^{-1} (l_u - \bar{v}_h), \tilde{\bar{v}}_h-m_K(\tilde{\bar{v}}_h)\rangle_{\partial \mathcal{T}_h}
      - \nu \eta \langle h_K^{-1} (l_u - \bar{v}_h), m_K(\tilde{\bar{v}}_h)\rangle_{\partial \mathcal{T}_h}
    \\
    & + \nu \langle \nabla \tilde{v}_h n, l_u - \bar{v}_h \rangle_{\partial \mathcal{T}_h}.
  \end{align*}
  Using the Cauchy--Schwarz and Young's inequalities, a discrete trace
  inequality, \cref{eq:bound-qnorm-aux1,eq:facet-norm-bound} and the
  definitions of the norms in \cref{eq:inner-p}, we find
  \begin{align*}
    & \tnorm{(l_p, \bar{q}_h)}_q^2
    \\
    \le&
         \del[2]{\tfrac{\delta_1}{2} + \tfrac{\delta_2}{2} + \tfrac{\delta_3 c_{tr}^2}{2}
         + \tfrac{\delta_4c_{tr}^2\bar{c}^2}{2} + \tfrac{\delta_5 c_{tr}^2}{2}
         + \tfrac{\delta_6}{2} + \tfrac{\delta_7 \bar{c}^2 \eta}{2}
         + \tfrac{\delta_8 c_{tr}^2 \eta}{2} + \tfrac{\delta_9 c_{tr}^2}{2}
         }\tnorm{(l_u, \bar{v}_h)}_v^2
    \\
    & + \del[2]{\tfrac{1}{2\delta_1} + \tfrac{1}{2\delta_2} + \tfrac{1}{2\delta_3}
      + \tfrac{1}{2\delta_4} + \tfrac{1}{2\delta_6}
      + \tfrac{1}{2\delta_7} + \tfrac{1}{2\delta_9}
      }c_3^{-2} \tnorm{(l_p,\bar{q}_h)}_q^2
    \\
    & + \del[2]{\tfrac{1}{2\delta_5} + \tfrac{1}{2\delta_8}
      }\nu \norm[0]{h_K^{-1}m_K(\tilde{\bar{v}}_h)}_{\mathcal{T}_h}^2.
  \end{align*}
  Choosing
  $\delta_1 = \delta_2 = \delta_3 = \delta_4 = \delta_6 = \delta_7 =
  \delta_9 = 5 c_3^{-2}$ and $\delta_5 = \delta_8$, we find that there
  exists a uniform constant $c'_2 > 0$ such that
  \begin{equation}
    \label{eq:bound-qnorm-aux6}
    \tnorm{(l_p, \bar{q}_h)}_q^2
    \leq c'_2 \eta \big((\delta_5 + 1) \tnorm{(l_u, \bar{v}_h)}_v^2
    + \delta_5^{-1} \nu \norm[0]{h_K^{-1} m_K(\tilde{\bar{v}}_h)}_{\mathcal{T}_h}^2 \big).
  \end{equation}
  \textbf{Step 3.} Combining
  \cref{eq:bound-qnorm-aux5,eq:bound-qnorm-aux6}, and choosing
  $\delta_5 = \varepsilon_2$, we find that there exists a uniform
  constant $c_3' > 0$ such that
  \begin{multline}
    \label{eq:bound-qnorm-aux7}
    \tnorm{(l_p, \bar{q}_h)}_q^2
    \leq c_3' \eta \bigg[ \nu^{-1} \eta^{-1} \norm[0]{h_K^{1/2} \bar{q}_{1,h}}_{\partial \mathcal{T}_h}^2
    + \varepsilon_2 \tau^{-1} \tnorm{(\tilde{l}_p(\bar{q}_{2,h}), \bar{q}_{2,h})}_{q,1}^2
    \\
    + (\varepsilon_2 + 1) \tnorm{(l_u, \bar{v}_h)}_v^2
    + \varepsilon_2^{-1} \sum_{K \in \mathcal{T}_h} \min\{ \tau, h_K^{-2} \nu \} \norm[0]{m_K(\tilde{\bar{v}}_h)}_K^2\bigg].
  \end{multline}
  For the last term on the right hand side of
  \cref{eq:bound-qnorm-aux7} we have that there exists a uniform
  constant $c_m > 0$ such that (see \cite[Lemma
  A.2]{henriquez2025parameter}):
  \begin{equation*}    
    \min\{ \tau, h_K^{-2} \nu \} \norm[0]{m_K(\tilde{\bar
        v}_h)}_K^2
    \leq c_m \big( \tau \norm[0]{\tilde{v}_h}_K^2 
    + \nu \eta h_K^{-1} \norm[0]{\tilde{v}_h - \tilde{\bar{v}}_h}_{\partial K}^2 \big).
  \end{equation*}
  Combining the above with the definition of $\tnorm{\cdot}_v$,
  \cref{eq:bound-qnorm-aux1,eq:bound-qnorm-aux7}, and choosing
  $\varepsilon_2 = 2 c_3' c_m c_3^{-2} \eta $, we find that there
  exists a uniform constant $c' > 0$ such that
  \begin{equation}
    \label{eq:bound-qnorm-aux8}
    \tnorm{(l_p, \bar{q}_h)}_q^2
    \leq c' \eta^2 \big( \nu^{-1} \eta^{-1} \norm[0]{h_K^{1/2} \bar{q}_{1,h}}_{\partial \mathcal{T}_h}
    + \tau^{-1}\tnorm{(\tilde{l}_p(\bar{q}_{2,h}), \bar{q}_{2,h})}_{q,1}^2
    + \tnorm{(l_u, \bar{v}_h)}_v^2\big).
  \end{equation}
  Next we note that by combining
  \cref{eq:bili-form-tildea-equivalence} and
  \Cref{lem:schur-comp-equiv} we obtain:
  \begin{equation}
    \label{eq:bound-qnorm-aux9}
    \begin{split}
      \tau^{-1}\tnorm{\tilde{l}_p(\bar{q}_{2,h}, \bar{q}_{2,h})}_{q,1}^2
      &\leq \tilde{c}_1^{-1} \tilde{a}_h((\tilde{l}_p(\bar{q}_{2,h}), \bar{q}_{2,h}), (\tilde{l}_p(\bar{q}_{2,h}), \bar{q}_{2,h})))
      \\
      &\leq \tilde{c}_1^{-1} \tilde{c}_2 \tau^{-1} \langle S_{P^d} \bar{q}_{2,h}, \bar{q}_{2,h} \rangle_{\bar{Q}_h^*, \bar{Q}_h}.      
    \end{split}
  \end{equation}
  Therefore, by combining
  \cref{eq:bound-qnorm-aux8,eq:bound-qnorm-aux9} and taking the
  infimum over all splittings
  $\bar{q}_h = \bar{q}_{1,h} + \bar{q}_{2,h}$, we conclude that
  \cref{eq:bound-qnorm-1} holds.
\end{proof}

\begin{lemma}
  \label{lem:estimate-normv}
  There exists a positive uniform constant $c$ such that for all
  $(\bar{v}_h, \bar{q}_h) \in \bar{V}_h \times \bar{Q}_h$
  \begin{equation}
    \label{eq:estimate-normv}
    \tnorm{(l_u(\bar{v}_h, \bar{q}_h), \bar{v}_h)}_v^2
    \leq c \eta \del[1]{ \nu \tnorm{\bar{v}_h}_{v,h}^2
    + \norm[0]{\bar{v}_h}_{\bar{V}_h}^2 
    + \norm[0]{\bar{q}_h}_{\bar{Q}_h}^2 }.
  \end{equation}
\end{lemma}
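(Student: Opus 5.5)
We test the local solver \cref{eq:local-solver} against $(v_h,q_h)=(l_u,l_p)$ with $s=0$ and sum over all cells; write $l_u := l_u(\bar{v}_h,\bar{q}_h)$ and $l_p := l_p(\bar{v}_h,\bar{q}_h)$. The two pressure couplings $-(l_p,\nabla\cdot l_u)_K-(l_p,\nabla\cdot l_u)_K$ appear on the left-hand side. Testing instead with $(0,l_p)$ shows $(l_p,\nabla\cdot l_u)_K=0$, so both vanish. Rearranging with the terms in $\bar{v}_h$ moved to the left, we obtain
\begin{equation*}
  \tau\norm[0]{l_u}_{\mathcal{T}_h}^2 + d_h((l_u,\bar{v}_h),(l_u,\bar{v}_h))
  = -\langle \bar{q}_h, l_u\cdot n\rangle_{\partial\mathcal{T}_h} + R(\bar{v}_h,l_u),
\end{equation*}
where $R$ collects the terms in $\bar{v}_h$ that cannot be absorbed into $d_h$. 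Testing against $l_u-\bar{v}_h$ is not admissible, since $\bar{v}_h$ is not a cell function. We therefore test against $l_u - m_K(\bar{v}_h)$, which is admissible because $m_K(\bar{v}_h)$ is constant on $K$. This mirrors Step 1 of \cref{lem:bound-qnorm}. After this choice the left-hand side becomes $\tau(l_u,l_u-m_K(\bar{v}_h))_{\mathcal{T}_h}+d_h((l_u,\bar{v}_h),(l_u,\bar{v}_h))$ plus remainder terms of the form $\nu\eta\langle h_K^{-1}(l_u-\bar{v}_h),\bar{v}_h-m_K(\bar{v}_h)\rangle$ and $\nu\langle\nabla l_u n,\bar{v}_h-m_K(\bar{v}_h)\rangle$. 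By \eqref{eq:coercivity}, $\tau\norm[0]{l_u}^2+d_h\gtrsim\tnorm{(l_u,\bar{v}_h)}_v^2$.

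We bound the remainders with Cauchy--Schwarz, a discrete trace inequality and Young's inequality; each one yields a small multiple of $\tnorm{(l_u,\bar{v}_h)}_v^2$ plus $\eta\nu\tnorm{\bar{v}_h}_{v,h}^2$. The $\tau$ remainder $\tau(l_u,m_K(\bar{v}_h))$ gives a small multiple of $\tau\norm[0]{l_u}^2$ plus $\tau\norm[0]{m_K(\bar{v}_h)}^2_{\mathcal{T}_h}$. This last term is not directly controlled by $\tnorm{\bar{v}_h}_{v,h}$. We control it with the bound in \cref{lem:bound-qnorm} quoted from \cite[Lemma A.2]{henriquez2025parameter}, namely $\min\{\tau,\nu h_K^{-2}\}\norm[0]{m_K(\bar{v}_h)}_K^2\lesssim \tau\norm[0]{v}_K^2+\nu\eta h_K^{-1}\norm[0]{v-\bar{v}_h}_{\partial K}^2$. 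We apply it with $v$ the minimizer realizing the Schur complement $S_{P^u}$, so the right-hand side is $\le\eta\norm[0]{\bar{v}_h}_{\bar{V}_h}^2$. When $\tau\le\nu h_K^{-2}$ this handles the term directly. Otherwise we avoid the split and instead estimate $\tau(l_u,m_K)\le\tau\norm[0]{l_u}\norm[0]{m_K}$ using the same bound. An alternative, which I would try first, is to test with $l_u - \Pi(\bar{v}_h)$, where $\Pi(\bar{v}_h)$ is the cell component of the $S_{P^u}$-minimizer. Then $\tau(l_u,\cdot)$ and the gradient terms are bounded by $\norm[0]{\bar{v}_h}_{\bar{V}_h}$ directly, and $\tnorm{\bar{v}_h}_{v,h}$ enters only through face terms.

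For the pressure term, take an arbitrary splitting $\bar{q}_h=\bar{q}_{1,h}+\bar{q}_{2,h}$. The $\bar{q}_{1,h}$ part becomes $\langle\bar{q}_{1,h},(l_u-\bar{v}_h)\cdot n\rangle+\langle\bar{q}_{1,h},(\bar{v}_h-m_K(\bar{v}_h))\cdot n\rangle+\langle\bar{q}_{1,h},m_K(\bar{v}_h)\cdot n\rangle$, using single-valuedness of $\bar{v}_h$ and $\bar{v}_h=0$ on $\partial\Omega$. The first two pieces are bounded by $\nu^{-1}\norm[0]{h_K^{1/2}\bar{q}_{1,h}}^2_{\partial\mathcal{T}_h}$ times a constant, plus small multiples of $\nu\eta\norm[0]{h_K^{-1/2}(l_u-\bar{v}_h)}^2$ and $\nu\tnorm{\bar{v}_h}_{v,h}^2$. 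The $\bar{q}_{2,h}$ part is treated as in \cref{lem:bound-qnorm}. We write $\bar{q}_{2,h}=(\bar{q}_{2,h}-\tilde{l}_p(\bar{q}_{2,h}))+\tilde{l}_p(\bar{q}_{2,h})$ and integrate by parts. This produces $\tau^{-1}\tnorm{(\tilde{l}_p(\bar{q}_{2,h}),\bar{q}_{2,h})}_{q,1}^2$, which \eqref{eq:bound-qnorm-aux9} bounds by $\tau^{-1}\langle S_{P^d}\bar{q}_{2,h},\bar{q}_{2,h}\rangle$, plus small multiples of $\tau\norm[0]{l_u}^2$. Taking the infimum over splittings gives $\norm[0]{\bar{q}_h}_{\bar{Q}_h}^2$.

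\textbf{Main obstacle.} The step I expect to be hardest is the term $\langle\bar{q}_{1,h},m_K(\bar{v}_h)\cdot n\rangle$. It requires a quantity like $\tau\norm[0]{m_K}^2$ or $\nu h^{-2}\norm[0]{m_K}^2$, and neither is automatically controlled by the right-hand side. Since $\sum_{F\subset\partial K}\int_F n=0$, this term equals $\langle\bar{q}_{1,h}-m_K(\bar{q}_{1,h}),m_K(\bar{v}_h)\cdot n\rangle$. I would then bound it with the min-estimate above and the $\eta$ loss, absorbing the factor $\eta$ into the constant $c\eta$. After absorbing all small multiples of $\tnorm{(l_u,\bar{v}_h)}_v^2$ into the left-hand side, the result \eqref{eq:estimate-normv} follows.
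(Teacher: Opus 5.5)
There is a genuine gap in the regime $\tau \gtrsim \nu h_K^{-2}$. Your test with $v_h = l_u - m_K(\bar{v}_h)$ is exactly the paper's Step~1, but on its own it cannot close there.

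That test function produces $\tau$-weighted mean terms. Besides $\tau(l_u, m_K(\bar{v}_h))_{\mathcal{T}_h}$, the pressure term becomes $\langle \bar{q}_h, (l_u - m_K(\bar{v}_h))\cdot n\rangle_{\partial\mathcal{T}_h}$. Its $\bar{q}_{2,h}$ part contains $\langle \bar{q}_{2,h} - \tilde{l}_p(\bar{q}_{2,h}), m_K(\bar{v}_h)\cdot n\rangle_{\partial\mathcal{T}_h}$ and $(\nabla \tilde{l}_p(\bar{q}_{2,h}), m_K(\bar{v}_h))_{\mathcal{T}_h}$. Your pressure paragraph instead treats $\langle \bar{q}_h, l_u\cdot n\rangle_{\partial\mathcal{T}_h}$, which belongs to a different test function. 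All of these terms need $\tau\norm[0]{m_K(\bar{v}_h)}_{\mathcal{T}_h}^2$, while \cite[Lemma A.2]{henriquez2025parameter} controls only $\min\{\tau,\nu h_K^{-2}\}\norm[0]{m_K(\bar{v}_h)}_K^2$.

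Your fallback for $\tau > \nu h_K^{-2}$, namely $\tau(l_u,m_K(\bar{v}_h))\le\tau\norm[0]{l_u}\norm[0]{m_K(\bar{v}_h)}$, still requires $\tau^{1/2}\norm[0]{m_K(\bar{v}_h)}$, and no such bound exists. The whole right-hand side of \eqref{eq:estimate-normv} stays bounded as $\tau\to\infty$: take $w_h=0$ in the minimization defining $S_{P^u}$ and $\bar{q}_{2,h}=0$ in \eqref{eq:innerp-barQ}. By contrast, $\tau\norm[0]{m_K(\bar{v}_h)}^2$ is unbounded.

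The missing idea is the paper's second energy identity: test also with $v_h = l_u$, $q_h=-l_p$. Then $m_K(\bar{v}_h)$ enters only through the face terms $\nu\eta\langle h_K^{-1}(l_u-\bar{v}_h), m_K(\bar{v}_h)\rangle_{\partial\mathcal{T}_h}$, $\nu\langle \nabla l_u n, m_K(\bar{v}_h)\rangle_{\partial\mathcal{T}_h}$ and $\langle \bar{q}_{1,h}, m_K(\bar{v}_h)\cdot n\rangle_{\partial\mathcal{T}_h}$. By discrete trace inequalities these cost $\nu\eta\norm[0]{h_K^{-1}m_K(\bar{v}_h)}_{\mathcal{T}_h}^2$, and the $\bar{q}_{2,h}$ part now pairs only with $l_u$. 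Combining the two estimates leaves $\eta\sum_K \min\{\tau,\nu h_K^{-2}\}\norm[0]{m_K(\bar{v}_h)}_K^2$. This combination uses quasi-uniformity, or alternatively a cellwise test function $l_u - \theta_K m_K(\bar{v}_h)$ with $\theta_K\in\{0,1\}$. Only at that point does Lemma A.2 apply. Your way of applying it, with the cell component of the $S_{P^u}$-minimizer, is correct and a bit more direct than the paper's detour through $\tilde{l}_u$, $\tilde{d}_h$ and \cref{lem:schur-comp-equiv}.

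Two further points. First, the term you flag as the main obstacle, $\langle \bar{q}_{1,h}, m_K(\bar{v}_h)\cdot n\rangle$, does not arise with the test function $l_u - m_K(\bar{v}_h)$. With the test function $l_u$ it does arise, but it is harmless:
\begin{itemize}
\item summed over $\mathcal{T}_h$, it combines with $\langle \bar{q}_{1,h}, (\bar{v}_h - m_K(\bar{v}_h))\cdot n\rangle$ into $\langle \bar{q}_{1,h}, \bar{v}_h\cdot n\rangle_{\partial\mathcal{T}_h}=0$, by single-valuedness and $\bar{v}_h=0$ on $\partial\Omega$;
\item in the regime where that test is used, it can also simply be bounded by $\nu^{-1}\eta^{-1}\norm[0]{h_K^{1/2}\bar{q}_{1,h}}_{\partial\mathcal{T}_h}^2 + \nu\eta\norm[0]{h_K^{-1}m_K(\bar{v}_h)}_{\mathcal{T}_h}^2$, as the paper does.
\end{itemize}
Your proposed treatment via the min-estimate would in fact fail for $\tau<\nu h_K^{-2}$. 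A $\nu^{-1}$-weighted $\bar{q}_{1,h}$ can only be paired with $\nu h_K^{-2}\norm[0]{m_K(\bar{v}_h)}_K^2$.

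Second, in the alternative test $l_u - \Pi(\bar{v}_h)$, $\Pi(\bar{v}_h)$ is not divergence free, so $(l_p, \nabla\cdot \Pi(\bar{v}_h))_K$ survives. The identity then reads $\tilde{d}_h((l_u,\bar{v}_h),(l_u,\bar{v}_h)) = \tilde{d}_h((l_u,\bar{v}_h),(\Pi(\bar{v}_h),\bar{v}_h)) + b_h(\Pi(\bar{v}_h),(l_p,\bar{q}_h)) - \langle \bar{q}_h, l_u\cdot n\rangle_{\partial\mathcal{T}_h}$. This route can be completed with the boundedness of $b_h$ and \cref{lem:bound-qnorm}, whose proof does not use the present lemma. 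It then needs neither $m_K$ nor $\tnorm{\bar{v}_h}_{v,h}$. However, that ingredient is absent from your plan, and it yields $\eta^2$ in place of $\eta$.
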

\begin{proof}
  To simplify the notation in what follows, we write
  $l_p := l_p(\bar{v}_h, \bar{q}_h) $ and
  $l_u := l_u(\bar{v}_h, \bar{q}_h) $.

  \textbf{Step 1.} Choosing $v_h = l_u - m_K(\bar{v}_h)$,
  $q_h = - l_p$, and $s = 0$ in \cref{eq:local-solver}, and after
  reordering, we obtain
  \begin{equation}
    \label{eq:estimate-normv-aux1}
    \begin{split}
      \tau \norm[0]{l_u}_K^2
      &+ \nu \norm[0]{\nabla l_u}_K^2      
      + 2 \nu \langle \nabla l_u n, \bar{v}_h - l_u \rangle_{\partial K}
      + \nu \eta h_K^{-1} \norm[0]{l_u - \bar{v}_h}_{\partial K}^2      
    \\
    =&
      \tau (l_u, m_K(\bar{v}_h))_K
      + \nu \langle \nabla l_u n, \bar{v}_h - m_K(\bar{v}_h) \rangle_{\partial K}
    \\
    & + \nu \eta h_K^{-1} \langle \bar{v}_h - m_K(\bar{v}_h), \bar{v}_h - l_u\rangle_{\partial K}
      - \langle \bar{q}_h, (l_u - m_K(\bar{v}_h)) \cdot n\rangle_{\partial K}.      
    \end{split}
  \end{equation}
  Consider an arbitrary splitting
  $\bar{q}_h = \bar{q}_{1,h} + \bar{q}_{2,h}$. Let
  $\tilde{l}_p(\cdot)$ be as defined in \cref{lem:redauxproblem} and
  note that
  $0 = (\tilde{l}_p(\bar{q}_{2,h}), \nabla \cdot m_K(\bar{v}_h))_K = -
  (\nabla \tilde{l}_p(\bar{q}_{2,h}), m_K(\bar{v}_h))_K + \langle
  \tilde{l}_p(\bar{q}_{2,h}) , m_K(\bar{v}_h) \cdot n
  \rangle_{\partial K}$. For the last term on the right hand side of
  \cref{eq:estimate-normv-aux1} we then find:
  \begin{equation}
    \label{eq:estimate-normv-aux2}
    \begin{split}
      \langle \bar{q}_h, (l_u - m_K(\bar{v}_h)) \cdot n \rangle_{\partial K}
      =& \langle \bar{q}_{1,h}, (l_u - m_K(\bar{v}_h)) \cdot n \rangle_{\partial K}
         + \langle \bar{q}_{2,h}, (l_u - m_K(\bar{v}_h)) \cdot n \rangle_{\partial K}
      \\
      =& \langle \bar{q}_{1,h}, (l_u - \bar{v}_h) \cdot n \rangle_{\partial K}
         + \langle \bar{q}_{1,h}, (\bar{v}_h - m_K(\bar{v}_h)) \cdot n \rangle_{\partial K}
      \\
       & + \langle \bar{q}_{2,h}, l_u \cdot n \rangle_{\partial K}
         - \langle \bar{q}_{2,h} - \tilde{l}_p(\bar{q}_{2,h}), m_K(\bar{v}_h) \cdot n \rangle_{\partial K}
      \\
       & - (\nabla \tilde{l}_p(\bar{q}_{2,h}), m_K(\bar{v}_h))_K.     
    \end{split}
  \end{equation}
  Choose $v_h = 0$, $q_h = \tilde{l}_p(\bar{q}_{2,h})$, and $s = 0$ in
  \cref{eq:local-solver}. We find that
  $0 = - (\tilde{l}_p(\bar{q}_{2,h}), \nabla \cdot l_u)_K = (\nabla
  \tilde{l}_p(\bar{q}_{2,h}), l_u)_K - \langle
  \tilde{l}_p(\bar{q}_{2,h}) , l_u \cdot n \rangle_{\partial K}$.
  Combining with \cref{eq:estimate-normv-aux2} we obtain:
  \begin{equation}
    \label{eq:estimate-normv-aux2'}
    \begin{split}
      \langle \bar{q}_h, (l_u - m_K(\bar{v}_h)) \cdot n \rangle_{\partial K}
      =& \langle \bar{q}_{1,h}, (l_u - \bar{v}_h) \cdot n \rangle_{\partial K}
         + \langle \bar{q}_{1,h}, (\bar{v}_h - m_K(\bar{v}_h)) \cdot n \rangle_{\partial K}
      \\
       & + \langle \bar{q}_{2,h} - \tilde{l}_p(\bar{q}_{2,h}), l_u \cdot n \rangle_{\partial K}
         + (\nabla \tilde{l}_p(\bar{q}_{2,h}), l_u)_K
      \\
       & - \langle \bar{q}_{2,h} - \tilde{l}_p(\bar{q}_{2,h}), m_K(\bar{v}_h) \cdot n \rangle_{\partial K}
      \\
       & - (\nabla \tilde{l}_p(\bar{q}_{2,h}), m_K(\bar{v}_h))_K.     
    \end{split}
  \end{equation}
  Combining \cref{eq:estimate-normv-aux1,eq:estimate-normv-aux2'},
  summing over all elements, and using \cref{eq:coercivity}, we obtain
  \begin{align*}
    c_c \tnorm{(l_u, \bar{v}_h)}_v^2
    \le&
       \tau (l_u, m_K(\bar{v}_h))_{\mathcal{T}_h}
       + \nu \langle \nabla l_u n, \bar{v}_h - m_K(\bar{v}_h) \rangle_{\partial \mathcal{T}_h}
    \\
    & + \nu \eta \langle h_K^{-1} (\bar{v}_h - m_K(\bar{v}_h)), \bar{v}_h - l_u\rangle_{\partial \mathcal{T}_h}
    \\
    & - \langle \bar{q}_{1,h}, (l_u - \bar{v}_h) \cdot n \rangle_{\partial \mathcal{T}_h}
      - \langle \bar{q}_{1,h}, (\bar{v}_h - m_K(\bar{v}_h)) \cdot n \rangle_{\partial \mathcal{T}_h}
    \\
    & - \langle \bar{q}_{2,h} - \tilde{l}_p(\bar{q}_{2,h}), l_u \cdot n \rangle_{\partial \mathcal{T}_h}
      - (\nabla \tilde{l}_p(\bar{q}_{2,h}), l_u)_{\mathcal{T}_h}
    \\
    & + \langle \bar{q}_{2,h} - \tilde{l}_p(\bar{q}_{2,h}), m_K(\bar{v}_h) \cdot n \rangle_{\partial \mathcal{T}_h}
     + (\nabla \tilde{l}_p(\bar{q}_{2,h}), m_K(\bar{v}_h))_{\mathcal{T}_h}.
  \end{align*}
  Applying the Cauchy--Schwarz and Young's inequalities, a discrete
  trace inequality, and using the definitions of the norms in
  \cref{eq:inner-p}, we obtain
  \begin{align*}
    c_c \tnorm{(l_u, \bar{v}_h)}_v^2
    \le & \del[2]{\tfrac{\varepsilon_1}{2} + \tfrac{\varepsilon_2}{2} + \tfrac{\varepsilon_3}{2} + \tfrac{\varepsilon_4}{2} + \tfrac{\varepsilon_5}{2} + \tfrac{\varepsilon_6}{2} }\tnorm{(l_u, \bar{v}_h)}_v^2
    \\
    & + \del[2]{\tfrac{c_{tr}^2}{2\varepsilon_2} + \tfrac{\eta}{2\varepsilon_3} + \tfrac{\eta}{2}}\nu \norm[0]{h_K^{-1/2}(\bar{v}_h - m_K(\bar{v}_h))}_{\partial \mathcal{T}_h}^2
    \\
         & + \del[2]{\tfrac{1}{2\varepsilon_4} + \tfrac{1}{2} } \nu^{-1} \eta^{-1} \norm[0]{h_K^{1/2} \bar{q}_{1,h}}_{\partial\mathcal{T}_h}^2
    \\
         & + \del[2]{\tfrac{c_{tr}^2}{2\varepsilon_5} + \tfrac{1}{2\varepsilon_6} + \tfrac{c_{tr}^2+1}{2} } \tau^{-1} \tnorm{(\tilde{l}_p(\bar{q}_{2,h}), \bar{q}_{2,h})}_{q,1}^2
    \\
    & + \del[2]{\tfrac{1}{2\varepsilon_1} + 1 }\tau \norm[0]{m_K(\bar{v}_h)}_{\mathcal{T}_h}^2,
  \end{align*}
  with $\varepsilon_i$, $i=1,\hdots,6$, positive constants that are
  free to choose. We choose
  $\varepsilon_1 = \varepsilon_2 = \varepsilon_3 = \varepsilon_4 =
  \varepsilon_5 = \varepsilon_6 = c_c / 6$ and find that there exists
  a uniform constant $c_1' > 0$ such that
  \begin{multline}
    \label{eq:estimate-normv-aux3}
    \tnorm{(l_u, \bar{v}_h)}_v^2
    \leq c_1' \eta \big( \nu \norm[0]{h_K^{-1/2}(\bar{v}_h - m_K(\bar{v}_h))}_{\partial \mathcal{T}_h}^2
    + \nu^{-1} \eta^{-1} \norm[0]{h_K^{1/2} \bar{q}_{1,h}}_{\partial \mathcal{T}_h}^2
    \\
    + \tau^{-1}\tnorm{(\tilde{l}_p(\bar{q}_{2,h}), \bar{q}_{2,h})}_{q,1}^2
    + \tau \norm[0]{m_K(\bar{v}_h)}_{\mathcal{T}_h}^2 \big).
  \end{multline}
  \textbf{Step 2.} We now choose $v_h = l_u$, $q_h = -l_p$, and
  $s = 0$ in \cref{eq:local-solver}. Then, after rearranging terms, we
  find
  \begin{equation}
    \label{eq:estimate-normv-aux4}
    \begin{split}
      \tau \norm[0]{l_u}_K^2
      &+ \nu \norm[0]{\nabla l_u}_K^2
        + \nu \eta \norm[0]{h_K^{-1/2}(l_u - \bar{v}_h)}_{\partial K}^2
        - 2\nu \langle \nabla l_u n, l_u - \bar{v}_h\rangle_{\partial K}
      \\
      =& - \nu \eta h_K^{-1} \langle l_u - \bar{v}_h, \bar{v}_h - m_K(\bar{v}_h) \rangle_{\partial K}
         - \nu \eta h_K^{-1} \langle l_u - \bar{v}_h, m_K(\bar{v}_h) \rangle_{\partial K}
      \\
      & + \nu \langle \nabla l_u n, \bar{v}_h - m_K(\bar{v}_h)\rangle_{\partial K}
        + \nu \langle \nabla l_u n, m_K(\bar{v}_h)\rangle_{\partial K}       
        - \langle \bar{q}_h, l_u \cdot n\rangle_{\partial K}.
    \end{split}
  \end{equation}
  Consider an arbitrary splitting
  $\bar{q}_h = \bar{q}_{1,h} + \bar{q}_{2,h} \in \bar{Q}_h$. Note also
  that
  $0 = - (\tilde{l}_p(\bar{q}_{2,h}), \nabla \cdot l_u)_K = (\nabla
  \tilde{l}_p(\bar{q}_{2,h}), l_u)_K - \langle
  \tilde{l}_p(\bar{q}_{2,h}), l_u \cdot n \rangle_{\partial K}$. For
  the last term on the right hand side of
  \cref{eq:estimate-normv-aux4} we then find:
  \begin{equation}
    \label{eq:estimate-normv-aux5}
    \begin{split}
      \langle \bar{q}_h, l_u \cdot n\rangle_{\partial K}
      =&
         \langle \bar{q}_{1,h}, l_u \cdot n\rangle_{\partial K}
         +
         \langle \bar{q}_{2,h}, l_u \cdot n\rangle_{\partial K}
      \\
      =&
         \langle \bar{q}_{1,h}, (l_u - \bar{v}_h) \cdot n\rangle_{\partial K}
         + \langle \bar{q}_{1,h}, (\bar{v}_h - m_K(\bar{v}_h)) \cdot n\rangle_{\partial K}
      \\
       & + \langle \bar{q}_{1,h}, m_K(\bar{v}_h) \cdot n\rangle_{\partial K}
         + \langle \bar{q}_{2,h} - \tilde{l}_p(\bar{q}_{2,h}), l_u \cdot n\rangle_{\partial K}
      \\
       & + (\nabla \tilde{l}_p(\bar{q}_{2,h}), l_u)_K.
    \end{split}
  \end{equation}
  Combining \cref{eq:estimate-normv-aux4,eq:estimate-normv-aux5},
  summing over all elements, and using \cref{eq:coercivity}, we obtain
  \begin{align*}
    c_c \tnorm{(l_u, \bar{v}_h)}_v^2
    \le& - \nu \eta \langle h_K^{-1} (l_u - \bar{v}_h), \bar{v}_h - m_K(\bar{v}_h) \rangle_{\partial \mathcal{T}_h}
         - \nu \eta \langle h_K^{-1} (l_u - \bar{v}_h), m_K(\bar{v}_h) \rangle_{\partial \mathcal{T}_h}
    \\
       & + \nu \langle \nabla l_u n, \bar{v}_h - m_K(\bar{v}_h)\rangle_{\partial \mathcal{T}_h}
         + \nu \langle \nabla l_u n, m_K(\bar{v}_h)\rangle_{\partial \mathcal{T}_h}
    \\
       & - \langle \bar{q}_{1,h}, (l_u - \bar{v}_h) \cdot n\rangle_{\partial \mathcal{T}_h}
         - \langle \bar{q}_{1,h}, (\bar{v}_h - m_K(\bar{v}_h)) \cdot n\rangle_{\partial \mathcal{T}_h}
    \\
       & - \langle \bar{q}_{1,h}, m_K(\bar{v}_h) \cdot n\rangle_{\partial \mathcal{T}_h}
         - \langle \bar{q}_{2,h} - \tilde{l}_p(\bar{q}_{2,h}), l_u \cdot n\rangle_{\partial \mathcal{T}_h}
    \\
       & - (\nabla \tilde{l}_p(\bar{q}_{2,h}), l_u)_{\mathcal{T}_h}.
  \end{align*}
  Using the Cauchy--Schwarz and Young's inequalities and a discrete
  trace inequality, and the definitions of the norms in
  \cref{eq:inner-p}, we find
  \begin{align*}
    c_c \tnorm{(l_u, \bar{v}_h)}_v^2
    \le &
          \del[2]{\tfrac{\delta_1}{2} + \tfrac{\delta_2}{2} + \tfrac{\delta_3}{2} + \tfrac{\delta_4}{2} + \tfrac{\delta_5}{2} + \tfrac{\delta_6}{2} + \tfrac{\delta_7}{2} } \tnorm{(l_u, \bar{v}_h)}_v^2
    \\
         & + \del[2]{\tfrac{\eta}{2\delta_1} + \tfrac{c_{tr}^2}{2\delta_3} + \tfrac{\eta}{2} } \nu \norm[0]{h_K^{-1/2} (\bar{v}_h - m_K(\bar{v}_h))}_{\partial \mathcal{T}_h}^2
    \\
         & + \del[2]{\tfrac{1}{2\delta_5} + \tfrac{1}{2} + \tfrac{c_{tr}}{2} } \nu^{-1} \eta^{-1} \norm[0]{h_K^{1/2}\bar{q}_{1,h}}_{\partial \mathcal{T}_h}^2
    \\
         & + \del[2]{\tfrac{c_{tr}^2}{2\delta_6} + \tfrac{1}{2\delta_7} } \tau^{-1} \tnorm{(\tilde{l}_p(\bar{q}_{2,h}), \bar{q}_{2,h})}_{q,1}^2
    \\
         & + \del[2]{\tfrac{c_{tr}^2\eta}{2\delta_2} + \tfrac{c_{tr}^4}{2\delta_4} + \tfrac{c_{tr}\eta}{2} } \nu \norm[0]{h_K^{-1}m_K(\bar{v}_h)}_{\mathcal{T}_h}^2,
  \end{align*}
  with $\delta_i$, $i=1,\hdots,7$, positive constants that are free to
  choose. We choose
  $\delta_1 = \delta_2 = \delta_3 = \delta_4 = \delta_5 = \delta_6 =
  \delta_7 = c_c/7$ and find that there exists a uniform constant
  $c_2' > 0$ such that
  \begin{multline}
    \label{eq:estimate-normv-aux6}
    \tnorm{(l_u, \bar{v}_h)}_v^2
    \leq c_2' \eta \big( \nu \norm[0]{h_K^{-1/2} (\bar{v}_h - m_K(\bar{v}_h))}_{\partial \mathcal{T}_h}^2
    + \nu^{-1} \eta^{-1} \norm[0]{h_K^{1/2} \bar{q}_{1,h}}_{\partial \mathcal{T}_h}^2
    \\
    + \tau^{-1} \tnorm{(\tilde{l}_p(\bar{q}_{2,h}), \bar{q}_{2,h})}_{q,1}^2
    + \nu \norm[0]{h_K^{-1} m_K(\bar{v}_h)}_{\mathcal{T}_h}^2 \big).
  \end{multline}
  \textbf{Step 3.} Combining
  \cref{eq:estimate-normv-aux3,eq:estimate-normv-aux6}, we find that
  there exist a uniform constant $c' > 0$ such that
  \begin{multline}
    \label{eq:estimate-normv-aux7}
    \tnorm{(l_u, \bar{v}_h)}_v^2
    \leq c' \eta \big( \nu \norm[0]{h_K^{-1/2}(\bar{v}_h - m_K(\bar{v}_h))}_{\partial \mathcal{T}_h}^2
    + \nu^{-1} \eta^{-1} \norm[0]{h_K^{1/2} \bar{q}_{1,h}}_{\partial \mathcal{T}_h}^2
    \\
    + \tau^{-1} \tnorm{(\tilde{l}_p(\bar{q}_{2,h}), \bar{q}_{2,h})}_{q,1}^2
    + \sum_{K \in \mathcal{T}_h} \min\{\tau, h_K^{-2} \nu\} \norm[0]{m_K(\bar{v}_h)}_{\mathcal{T}_h}^2\big).
  \end{multline}
  Let $\tilde{l}_u$ be defined as in
  \cref{lem:reduced-reac-dif-u}. Combining \cite[Lemma
  A.2]{henriquez2025parameter}, \cref{eq:spec-equiv-tilde-d}, and
  \Cref{lem:schur-comp-equiv}, we find that
  \begin{align*}
    &\sum_{K \in \mathcal{T}_h} \min\cbr[0]{\tau, h_K^{-2} \nu} \norm[0]{m_K(\bar{v}_h)}_{\mathcal{T}_h}^2
    \\
    & \leq c \del[1]{ \tau \norm[0]{\tilde{l}_u(\bar{v}_h)}_{\mathcal{T}_h}^2 
      + \nu \eta \norm[0]{h_K^{-1/2} (\tilde{l}_u(\bar{v}_h) - \bar{v}_h)}_{\partial \mathcal{T}_h}^2 }
    \\
    & \leq c \tnorm{(\tilde{l}_u(\bar{v}_h), \bar{v}_h)}_v^2
    \\
    & \leq c c_c^{-1} \tilde{d}_h((\tilde{l}_u(\bar{v}_h), \bar{v}_h), (\tilde{l}_u(\bar{v}_h), \bar{v}_h))
    \\ 
    & \leq c c_c^{-1} c_d \langle S_{P^u} \bar{v}_h, \bar{v}_h \rangle_{\bar{V}_h^*, \bar{V}_h}.
  \end{align*}
  Therefore, combining the above estimate with
  \cref{eq:estimate-normv-aux7} and taking the infimum over all
  splittings $\bar{q}_h = \bar{q}_{1,h} + \bar{q}_{2,h}$, we conclude
  that \cref{eq:estimate-normv} holds.
\end{proof}

We now show that \cref{eq:cond-precon-1b} holds.

\begin{lemma}
  \label{lem:cond2-th1}
  There exists a uniform constant $c > 0$ such that
  \begin{equation}
    \label{eq:cond2-th1}
    \tnorm{(l_u(\bar{v}_h, \bar{q}_h), \bar{v}_h, l_p(\bar{v}_h, \bar{q}_h), \bar{q}_h)}_{\boldsymbol{X}_h}^2
    \leq c \eta^3 \big(\norm[0]{\bar{v}_h}_{\bar{V}_h}^2
    + \norm[0]{\bar{q}_h}_{\bar{Q}_h}^2 \big) 
    \quad \forall (\bar{v}_h, \bar{q}_h) \in \bar{V}_h \times \bar{Q}_h.
  \end{equation}
\end{lemma}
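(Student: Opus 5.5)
The plan is to split the $\boldsymbol{X}_h$-norm into its velocity and pressure parts and bound each using the two preceding lemmas. By definition,
\[
\tnorm{(l_u, \bar{v}_h, l_p, \bar{q}_h)}_{\boldsymbol{X}_h}^2 = \tnorm{(l_u,\bar{v}_h)}_v^2 + \tnorm{(l_p,\bar{q}_h)}_q^2 ,
\]
where $l_u := l_u(\bar{v}_h,\bar{q}_h)$ and $l_p := l_p(\bar{v}_h,\bar{q}_h)$.

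For the pressure part we apply \cref{lem:bound-qnorm}. This gives $\tnorm{(l_p,\bar{q}_h)}_q^2 \lesssim \eta^2(\norm[0]{\bar{q}_h}_{\bar{Q}_h}^2 + \tnorm{(l_u,\bar{v}_h)}_v^2)$. The whole estimate therefore reduces to controlling $\tnorm{(l_u,\bar{v}_h)}_v^2$ by $\eta(\norm[0]{\bar{v}_h}_{\bar{V}_h}^2 + \norm[0]{\bar{q}_h}_{\bar{Q}_h}^2)$. Combined with the $\eta^2$ above, this produces the stated $\eta^3$.

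For the velocity part we use \cref{lem:estimate-normv}. It bounds $\tnorm{(l_u,\bar{v}_h)}_v^2$ by $c\eta(\nu\tnorm{\bar{v}_h}_{v,h}^2 + \norm[0]{\bar{v}_h}_{\bar{V}_h}^2 + \norm[0]{\bar{q}_h}_{\bar{Q}_h}^2)$. The remaining task, and the main obstacle, is to absorb $\nu\tnorm{\bar{v}_h}_{v,h}^2$ into $\norm[0]{\bar{v}_h}_{\bar{V}_h}^2$ uniformly in all parameters, i.e.\ to prove
\[
\nu\norm[0]{h_K^{-1/2}(\bar{v}_h - m_K(\bar{v}_h))}_{\partial\mathcal{T}_h}^2 \lesssim \norm[0]{\bar{v}_h}_{\bar{V}_h}^2 .
\]
This is presumably the estimate \cref{eq:facet-norm-bound} that was already used inside the proof of \cref{lem:bound-qnorm}, so I would invoke it directly.

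If that estimate needs justification, I would argue as follows. Let $\tilde{l}_u(\bar{v}_h)$ be the velocity lifting realizing the Schur complement $S_{P^u}$, so that $\norm[0]{\bar{v}_h}_{\bar{V}_h}^2 = \tnorm{(\tilde{l}_u(\bar{v}_h),\bar{v}_h)}_v^2 \ge \nu\tnorm{(\tilde{l}_u,\bar{v}_h)}_{v,1}^2$. It then suffices to show, on each cell $K$ and for any $w\in V(K)$,
\[
h_K^{-1/2}\norm[0]{\bar{v}_h - m_K(\bar{v}_h)}_{\partial K} \lesssim \norm[0]{\nabla w}_K + h_K^{-1/2}\norm[0]{w-\bar{v}_h}_{\partial K} .
\]
To see this, insert $w$ and the mean $m_K(w)$ of $w$ over $\partial K$. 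We have $\bar{v}_h - m_K(\bar{v}_h) = (\bar{v}_h - w) + (w - m_K(w)) + m_K(w-\bar{v}_h)$. The first and third pieces are controlled by $h_K^{-1/2}\norm[0]{w-\bar{v}_h}_{\partial K}$, using Cauchy--Schwarz for the mean. The middle piece is controlled by $\norm[0]{\nabla w}_K$ through a trace inequality combined with Poincaré on the reference element and scaling.

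Summing over cells and using $\eta>1$ gives the absorption. Combining the three steps then yields $\tnorm{(l_u,\bar{v}_h)}_v^2 \lesssim \eta(\norm[0]{\bar{v}_h}_{\bar{V}_h}^2 + \norm[0]{\bar{q}_h}_{\bar{Q}_h}^2)$, and hence \cref{eq:cond2-th1} with the factor $c\eta^3$.
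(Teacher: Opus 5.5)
Your proof is correct and follows the paper's skeleton: combine \cref{lem:bound-qnorm} with \cref{lem:estimate-normv}, which gives the factor $\eta^3$ via $\eta>1$, and then absorb $\nu\tnorm{\bar{v}_h}_{v,h}^2$ into $\norm[0]{\bar{v}_h}_{\bar{V}_h}^2$. The routes differ only in this absorption step.

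The paper's $\tilde{l}_u$ is the local solver of the reaction--diffusion form $\tilde{d}_h$ from \cref{lem:reduced-reac-dif-u}, not your minimizer, so your symbol $\tilde{l}_u$ names a different object. The paper uses \cref{eq:facet-norm-bound} and \cref{eq:spec-equiv-tilde-d} to bound $\nu\tnorm{\bar{v}_h}_{v,h}^2$ by $\bar{c}^2c_c^{-1}\tilde{d}_h((\tilde{l}_u(\bar{v}_h),\bar{v}_h),(\tilde{l}_u(\bar{v}_h),\bar{v}_h))$. It then needs \cref{lem:schur-comp-equiv} to compare the Schur complement of $\tilde{d}_h$ with $S_{P^u}$.

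You instead take the exact minimizer $w^*=-(P_{11}^u)^{-1}(P_{21}^u)^T\bar{v}_h$. For it, $\tnorm{(w^*,\bar{v}_h)}_v^2=\norm[0]{\bar{v}_h}_{\bar{V}_h}^2$ is an identity (the completion of squares in \cref{eq:cond1-aux1}), so no spectral-equivalence argument is needed. You also prove the cellwise facet estimate, which the paper only cites.

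One point of wording: \cref{eq:facet-norm-bound} on its own bounds $\tnorm{\bar{v}_h}_{v,h}$ by $\tnorm{\boldsymbol{v}_h}_{v,1}$ for an arbitrary cell component. It therefore cannot be ``invoked directly'' to reach the $\bar{V}_h$-norm. What does that is the pairing with $w^*$ in your fallback paragraph, and that paragraph is complete.

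Your route is shorter, self-contained, and has a cleaner constant. The paper's route reuses the $\tilde{d}_h$ machinery it already needs in \cref{lem:estimate-normv}. Its intermediate quantity is also exactly $\langle\widehat{S}_{P^u}\bar{v}_h,\bar{v}_h\rangle_{\bar{V}_h^*,\bar{V}_h}$, the velocity block of the alternative preconditioner $\widehat{P}$.
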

\begin{proof}
  Combining \cref{eq:bound-qnorm-1,eq:estimate-normv} we find that for
  all $(\bar{v}_h, \bar{q}_h) \in \bar{V}_h \times \bar{Q}_h$
  \begin{equation}
    \label{eq:luvbqbxnormc1p}
    \tnorm{(l_u(\bar{v}_h, \bar{q}_h), \bar{v}_h, l_p(\bar{v}_h, \bar{q}_h), \bar{q}_h)}_{\boldsymbol{X}_h}^2
    \leq c_1' \eta^3 \big( \nu \tnorm{\bar{v}_h}_{v,h}^2 
    + \norm[0]{\bar{v}_h}_{\bar{V}_h}^2
    + \norm[0]{\bar{q}_h}_{\bar{Q}_h}^2\big).
  \end{equation}
  Let $\tilde{l}_u$ be defined as in
  \cref{lem:reduced-reac-dif-u}. Then by \cref{eq:facet-norm-bound}
  and \cref{eq:spec-equiv-tilde-d}
  \begin{equation*}
    \nu\tnorm{\bar{v}_h}_{v,h}^2
    \le
    \bar{c}^2 \tnorm{(\tilde{l}_u(\bar{v}_h), \bar{v}_h)}_v^2
    \le \bar{c}^2 c_c^{-1} \tilde{d}_h((\tilde{l}_u(\bar{v}_h), \bar{v}_h), (\tilde{l}_u(\bar{v}_h), \bar{v}_h)).
  \end{equation*}
  Furthermore, since \cref{eq:spec-equiv-tilde-d} holds, we have by
  \cref{lem:schur-comp-equiv} that
  \begin{equation*}
    \tilde{d}_h((\tilde{l}_u(\bar{v}_h), \bar{v}_h), (\tilde{l}_u(\bar{v}_h), \bar{v}_h))
    \le c_2 \norm[0]{\bar{v}_h}_{\bar{V}_h}^2 \qquad \forall \bar{v}_h \in \bar{V}_h,
  \end{equation*}
  and so
  $\nu \tnorm{\bar{v}_h}_{v,h}^2 \le \bar{c}^2c_c^{-1}c_2
  \norm[0]{\bar{v}_h}_{\bar{V}_h}^2$. Combined with
  \cref{eq:luvbqbxnormc1p} the result follows.
\end{proof}

We conclude this section by remarking that
\cref{lem:cond1,lem:cond2-th1,thm:cond-precon} imply that $\bar{P}$
(cf. \cref{eq:barP}) is a parameter-robust preconditioner for the
reduced problem \cref{eq:condensed-formulation}.

\subsection{Characterization of $\bar{P}^{-1}$}
\label{ss:characterization-preconds}

The inverse of the reduced preconditioner $\bar{P}$ for
\cref{eq:condensed-formulation} is given by
\begin{equation}
  \label{eq:barP-1}
  \bar{P}^{-1} =
  \begin{bmatrix}
    S_{P^u}^{-1} & 0 \\ 0 & (\bar{P}^p)^{-1}
  \end{bmatrix}.
\end{equation}
To determine an expression for $(\bar{P}^p)^{-1}$ which is suitable
for implementation we follow the ideas of \cite[proof of Corollary
1]{baerland2020observation} and \cite[proof of Theorem
3.3]{fu2023uniform}.

For any $\bar{q}_h \in \bar{Q}_h$ we can write
\begin{equation}
  \label{eq:rewrite-hatQnorm}
  \norm[0]{\bar{q}_h}_{\bar{Q}_h}^2
  = \inf_{\bar{\psi}_h \in \bar{Q}_h} 
  \big( \nu^{-1} \langle S_{P^s} (\bar{q}_h - \bar{\psi}_h), \bar{q}_h - \bar{\psi}_h \rangle_{\bar{Q}_h^*, \bar{Q}_h}
  + \tau^{-1} \langle S_{P^d} \bar{\psi}_h, \bar{\psi}_h \rangle_{\bar{Q}_h^*, \bar{Q}_h} \big).
\end{equation}
The infimum is attained by the unique $\bar{\psi}_h \in \bar{Q}_h$
that satisfies
\begin{equation*}
  \nu^{-1} \langle S_{P^s}\bar{\psi}_h, \bar{\phi}_h \rangle_{\bar{Q}_h^*, \bar{Q}_h}
  + \tau^{-1} \langle S_{P^d} \bar{\psi}_h, \bar{\phi}_h \rangle_{\bar{Q}_h^*, \bar{Q}_h}
  = \nu^{-1} \langle S_{P^s}\bar{q}_h, \bar{\phi}_h \rangle_{\bar{Q}_h^*, \bar{Q}_h}
  \quad \forall \bar{\phi}_h \in \bar{Q}_h.
\end{equation*}
Since this holds for all $\bar{\phi}_h \in \bar{Q}_h$ we must have
\begin{equation}
  \label{eq:useful-eq}
  \tau^{-1} S_{P^d} \bar{\psi}_h = \nu^{-1} S_{P^s}(\bar{q}_h - \bar{\psi}_h)
  \quad \text{ and } \quad
  \bar{\psi}_h = (\tau^{-1} S_{P^d} + \nu^{-1} S_{P^s})^{-1} \nu^{-1} S_{P^s} \bar{q}_h.
\end{equation}
Substituting this $\bar{\psi}_h$ in \cref{eq:rewrite-hatQnorm} we find
that
\begin{align*}
  \norm[0]{\bar{q}_h}_{\bar{Q}_h}^2
  & = \nu^{-1} \langle S_{P^s} (\bar{q}_h - \bar{\psi}_h), \bar{q}_h - \bar{\psi}_h \rangle_{\bar{Q}_h^*, \bar{Q}_h}
    + \tau^{-1} \langle S_{P^d} \bar{\psi}_h, \bar{\psi}_h \rangle_{\bar{Q}_h^*, \bar{Q}_h}
  \\
  & = \langle \tau^{-1} S_{P^d} \bar{\psi}_h, \bar{q}_h - \bar{\psi}_h \rangle_{\bar{Q}_h^*, \bar{Q}_h}
    + \tau^{-1} \langle S_{P^d} \bar{\psi}_h, \bar{\psi}_h \rangle_{\bar{Q}_h^*, \bar{Q}_h}
  \\
  & = \langle \tau^{-1} S_{P^d} \bar{\psi}_h, \bar{q}_h \rangle_{\bar{Q}_h^*, \bar{Q}_h}
  \\
  & = \langle \tau^{-1} S_{P^d} (\tau^{-1} S_{P^d} + \nu^{-1} S_{P^s})^{-1} \nu^{-1} S_{P^s} \bar{q}_h, \bar{q}_h \rangle_{\bar{Q}_h^*, \bar{Q}_h}.
\end{align*}
Interpreting the operators as matrices we can note that
\begin{equation}
  \label{eq:inter-step}
  \begin{split}
    \bar{P}^p = \tau^{-1}
    & S_{P^d} (\tau^{-1} S_{P^d} + \nu^{-1} S_{P^s})^{-1} \nu^{-1} S_{P^s}
    \\
    &= \tau^{-1} S_{P^d} (\tau^{-1} S_{P^d} + \nu^{-1} S_{P^s})^{-1}
      ((\tau^{-1} S_{P^d} + \nu^{-1} S_{P^s}) - \tau^{-1} S_{P^d})
    \\  
    &= \tau^{-1} S_{P^d} - \tau^{-1} S_{P^d} (\tau^{-1} S_{P^d} + \nu^{-1} S_{P^s})^{-1} \tau^{-1} S_{P^d}.
  \end{split}
\end{equation}
Using the Sherman--Morrison--Woodbury formula (see, e.g., \cite[page
258]{higham2002accuracy}) it follows that
\begin{equation*}
  (\bar{P}^p)^{-1}
  = \tau S_{P^d}^{-1} + \nu S_{P^s}^{-1},
\end{equation*}
and so we can write \cref{eq:barP-1} as
\begin{equation*}
  \bar{P}^{-1}  
  = \begin{bmatrix}
    S_{P^u}^{-1} & 0\\
    0 & \tau S_{P^d}^{-1} + \nu S_{P^s}^{-1}
  \end{bmatrix}.
\end{equation*}

\subsection{An alternative parameter-robust preconditioner  $\widehat{P}^{-1}$}
\label{ss:characterization-preconds-alt}

Consider a block diagonal operator $\widehat{P}$ with the same block
structure as $\bar{P}$ (see \cref{eq:barPP11P22}):
\begin{equation*}
  \widehat{P} =
  \begin{bmatrix}
    \widehat{P}_{11} & 0 \\ 0 & \widehat{P}_{22}
  \end{bmatrix}.
\end{equation*}
If $\widehat{P}_{11}$ and $\widehat{P}_{22}$ are norm equivalent to
$\bar{P}_{11}$ and $\bar{P}_{22}$, respectively, then $\widehat{P}$ is
also a parameter-robust preconditioner for the reduced problem
\cref{eq:condensed-formulation}. Choosing
$\widehat{P}_{22} = \bar{P}_{22}$, we present here an alternative for
$\bar{P}_{11}$.

Consider the operator $\widehat{S}_{P^u} : \bar{V}_h \to \bar{V}_h^*$
which is defined by
\begin{equation*}
  \langle \widehat{S}_{P^u} \bar{u}_h, \bar{v}_h \rangle_{\bar{V}_h^*, \bar{V}_h}
  = \tilde{d}_h((\tilde{l}_u(\bar{u}_h), \bar{u}_h), (\tilde{l}_u(\bar{v}_h), \bar{v}_h)).
\end{equation*}
Since $\tilde{d}_h(\cdot, \cdot)$ is bounded and coercive we have that
$\widehat{S}_{P^u}$ is norm-equivalent to $S_{P^u}$.

An alternative parameter-robust preconditioner is therefore given by
\begin{equation*}
  \label{eq:barP-2}
  \widehat{P}^{-1}  
  = \begin{bmatrix}
    \widehat{S}_{P^u}^{-1} & 0\\
    0 & \tau S_{P^d}^{-1} + \nu S_{P^s}^{-1}
  \end{bmatrix}.
\end{equation*}

\section{Numerical examples}
\label{s:numex}

In this section we verify our analysis by demonstrating that
$\bar{P}^{-1}$ and $\widehat{P}^{-1}$ are parameter-robust
preconditioners for the reduced problem
\cref{eq:condensed-formulation}. We solve
\cref{eq:condensed-formulation}, preconditioned by $\bar{P}^{-1}$ and
$\widehat{P}^{-1}$, using MINRES with a relative preconditioned
residual tolerance of $10^{-8}$ for two dimensional simulations and
$10^{-6}$ for three dimensional simulations. We also consider the
performance of $\bar{P}^{-1}$ and $\widehat{P}^{-1}$ as
preconditioners for an EDG-HDG discretization of the time-dependent
Stokes equations \cite{rhebergen2020embedded}. An EDG-HDG
discretization is obtained by replacing $\bar{V}_h$ in
\cref{eq:hdgmethod} by $\bar{V}_h \cap C^0(\Gamma_0)$.

We consider both exact and inexact versions of the preconditioners. For
the inexact versions we apply a balancing domain decomposition by
constraints (BDDC) method \cite{schoberl2013domain} to approximate
$S_{P^u}^{-1}$, $\widehat{S}_{P^u}^{-1}$, and $\tau S_{P^d}^{-1}$. To
approximate $\nu S_{P^s}^{-1}$ we use a direct solver.

All simulations are performed with a polynomial degree of $k = 2$
and on unstructured simplicial meshes generated by Netgen
\cite{schoberl1997netgen}. NGSolve \cite{schoberl2014c++} was used to
implement the discretizations. 

\subsection{Example 1: manufactured solutions}
\label{ss:tc1}

We consider manufactured solutions in the domain $\Omega =
(0,1)^d$. In two dimensions the source and boundary terms are set such
that
\begin{equation*}
  u =
  \begin{bmatrix}
    \sin(\pi x_1) \sin(\pi x_2) \\
    \cos(\pi x_1) \cos(\pi x_2)    
  \end{bmatrix},
  \quad 
  p = \sin(\pi x_1) \cos(\pi x_2),
\end{equation*}
while in three dimensions these are set such that
\begin{equation*}
  u =
  \begin{bmatrix}
    \pi \sin(\pi x_1) \cos(\pi x_2) - \pi \sin(\pi x_1) \cos(\pi x_3)  \\
    \pi \sin(\pi x_2) \cos(\pi x_3) - \pi \sin(\pi x_2) \cos(\pi x_1)  \\
    \pi \sin(\pi x_3) \cos(\pi x_1) - \pi \sin(\pi x_3) \cos(\pi x_2)
  \end{bmatrix},
  \quad 
  p = \cos(\pi x_1) \sin(\pi x_2) \cos(\pi x_3).
\end{equation*}
For this example the penalization term is chosen as $\eta = 4 k^2$ for
the 2D case and $\eta = 6 k (k+1)$ for the 3D case.  In
\Cref{tab:pre1vpre2} we consider the $h$-robustness of the
preconditioners $\bar{P}^{-1}$ and $\widehat{P}^{-1}$. We set
$\nu = \tau = 1$ and observe that both preconditioners are $h$-robust
in two and three dimensions, for HDG and EDG-HDG discretizations, and
for their exact and inexact versions. The results furthermore indicate
that $\widehat{P}^{-1}$ outperforms $\bar{P}^{-1}$ in terms of the
number of iterations. Therefore, in the remaining experiments, we only
report results for $\widehat{P}^{-1}$.

In \Cref{tab:paramet-robust} we consider the parameter ($\nu$ and
$\tau$) robustness of the preconditioner $\widehat{P}^{-1}$. For this
we consider a fixed mesh with 32768 simplices for the two-dimensional
problem and 3072 simplices for the three-dimensional problem. Although
there is some variation in the iteration count, this variation is
relatively small considering the large variation in the parameter
ratio, $10^{-6} \le \nu/\tau \le 1$, with slightly smaller iteration
count when $\nu/\tau$ is small. This is observed for both the exact
and inexact versions of the preconditioner and in two and three
dimensions.

\begin{table}[tbp]
  \centering
  \resizebox{\textwidth}{!}{%
    \begin{tabular}{c|c|c|c|c|c||c|c|c|c|c}
      \hline
      \multicolumn{11}{c}{EDG-HDG}\\
      \hline
      & \multicolumn{5}{c||}{2d} & \multicolumn{5}{c}{3d} \\
      \hline
      Cells & 512 & 2048 & 8192 & 32768 & 131072 & 48 & 384 & 3072 & 24576 & 196608\\
      \hline
      $\bar{P}^{-1}$ & 80 (103) & 83 (106) & 82 (107) & 82 (108) & 82 (107) & 56 (79) & 70 (90) & 69 (87) & 65 (86) & 65 (81) \\
      \hline
      $\widehat{P}^{-1}$ & 71 (94) & 71 (96) & 71 (97) & 71 (97) & 71 (97) & 48 (73) & 60 (76) & 61 (75) & 55 (75) & 54 (68) \\
      \hline
      \multicolumn{11}{c}{HDG}\\
      \hline
      & \multicolumn{5}{c||}{2d} & \multicolumn{5}{c}{3d} \\
      \hline
      Cells & 512 & 2048 & 8192 & 32768 & 131072 & 48 & 384 & 3072 & 24576 & 196608\\
      \hline
      $\bar{P}^{-1}$ & 92 (118) & 91 (120) & 90 (122) & 90 (122) & 89 (121) & 79 (110) & 103 (138) & 107 (136) & 104 (136) & 102 (136)\\
      \hline
      $\widehat{P}^{-1}$ & 82 (110) & 82 (112) & 81 (111) & 79 (111) & 79 (111) & 74 (105) & 94 (119) & 98 (118) & 95 (118) & 93
                                                                                                                              (118)\\
      \hline
    \end{tabular}}
  \caption{$h$-robustness for the manufactured solution test case of
    \cref{ss:tc1}. The table reports the number of iterations required
    by preconditioned MINRES (preconditioners $\bar{P}^{-1}$ and
    $\widehat{P}^{-1}$ in both their exact and inexact forms) to
    achieve convergence for different mesh sizes $h$. The results
    corresponding to the inexact preconditioners are shown in
    parentheses.  }
  \label{tab:pre1vpre2}
\end{table}

\begin{table}[tbp]
  \centering
  \begin{tabular}{c|c|c|c||c|c|c}
    \hline
    \multicolumn{7}{c}{EDG-HDG}\\
    \hline
    & \multicolumn{3}{c||}{2d} & \multicolumn{3}{c}{3d} \\
    \hline
    & $\tau = 1$ & $\tau = 10^2$ & $\tau = 10^3$ & $\tau = 1$ & $\tau = 10^2$ & $\tau = 10^3$ \\
    \hline
    $\nu = 1$ & 68 (89) & 68 (89) & 68 (88) & 60 (78) & 55 (73) & 49 (59) \\
    \hline
    $\nu = 10^{-2}$ & 70 (91) & 65 (81) & 56 (66) & 40 (97) & 35 (47) & 33 (36) \\
    \hline
    $\nu = 10^{-3}$ & 69 (91) & 56 (66) & 46 (49) & 69 (81) & 42 (43) & 36 (36) \\
    \hline
    \multicolumn{7}{c}{HDG}\\
    \hline
    & \multicolumn{3}{c||}{2d} & \multicolumn{3}{c}{3d} \\
    \hline
    & $\tau = 1$ & $\tau = 10^{2}$ & $\tau = 10^{3}$ & $\tau = 1$ & $\tau = 10^{2}$ & $\tau = 10^{3}$ \\
    \hline
    $\nu = 1$ & 79 (105) & 79 (104) & 77 (103) & 99 (123) & 97 (98) & 85 (98) \\
    \hline
    $\nu = 10^{-2}$ & 79 (105) & 76 (97) & 64 (78) & 108 (128) & 60 (97) & 50 (103) \\
    \hline
    $\nu = 10^{-3}$ & 79 (104) & 64 (78) & 49 (68) & 98 (116) & 55 (112) & 54 (107) \\
    \hline
  \end{tabular}
  \caption{Parameter-robustness for the manufactured solution test
    case of \cref{ss:tc1}. The table reports the number of iterations
    required by preconditioned MINRES method (preconditioner
    $\widehat{P}^{-1}$ in both its exact and inexact forms) to
    achieve convergence for different values of $\nu$ and $\tau$. The
    results corresponding to the inexact form of $\widehat{P}^{-1}$
    are shown in parentheses.}
  \label{tab:paramet-robust}
\end{table}

\subsection{Example 2: lid-driven cavity problem}
\label{ss:tc2}

We now consider a lid-driven cavity problem in two and three
dimensions. In two dimensions we consider the domain
$\Omega = (-1, 1)^2$ and impose $u = (1 - x_1^4, 0)$ on the boundary
$x_2 = 1$ and $u = (0, 0)$ on the remaining boundaries. To evaluate
the performance of $\widehat{P}^{-1}$ we use a fixed mesh consisting
of 59392 simplices and we set $\eta = 6 k^2$. In three dimensions we
consider the domain $\Omega = (0, 1)^3$ and impose
$u = (1 - \tau_1^4, (1 - \tau_2^2)^4 / 10, 0)$, where
$\tau_i = 2x_i - 1$, on the boundary $x_3 = 1$ and $u = (0, 0, 0)$ on
the remaining boundaries. In three dimensions we consider a fixed mesh
with 3072 simplices and $\eta = 6 k (k+1)$.

In \Cref{tab:lid-paramet-robust} we consider the $\nu$ and $\tau$
parameter-robustness of the preconditioner $\widehat{P}^{-1}$. We
observe similar behaviour as in \cref{ss:tc1}, i.e., we observe some
variation in the iteration count, but this variation is acceptable
given the large variation in the ratio $\nu/\tau$, which ranges
between $10^{-6}$ and $1$.

\begin{table}[tbp]
  \centering
  \begin{tabular}{c|c|c|c||c|c|c}
    \hline
    \multicolumn{7}{c}{EDG-HDG}\\
    \hline
    & \multicolumn{3}{c||}{2d} & \multicolumn{3}{c}{3d} \\
    \hline
    & $\tau = 1$ & $\tau = 10^2$ & $\tau = 10^3$ & $\tau = 1$ & $\tau = 10^2$ & $\tau = 10^3$ \\
    \hline
    $\nu = 1$ & 92 (135) & 87 (125) & 79 (100) & 91 (118) & 85 (105) & 74 (82) \\
    \hline
    $\nu = 10^{-2}$ & 87 (125) & 71 (74) & 73 (74) & 85 (105) & 85 (64) & 45 (47) \\
    \hline
    $\nu = 10^{-3}$ & 79 (100) & 69 (73) & 78 (86) & 74 (82) & 48 (49) & 38 (38) \\
    \hline
    \multicolumn{7}{c}{HDG}\\
    \hline
    & \multicolumn{3}{c||}{2d} & \multicolumn{3}{c}{3d} \\
    \hline
    & $\tau = 1$ & $\tau = 10^{2}$ & $\tau = 10^{3}$ & $\tau = 1$ & $\tau = 10^{2}$ & $\tau = 10^{3}$ \\
    \hline
    $\nu = 1$ & 107 (158) & 102 (151) & 96 (122) & 122 (166) & 120 (155) & 108 (124) \\
    \hline
    $\nu = 10^{-2}$ & 102 (151) & 81 (114) & 84 (166) & 120 (155) & 79 (127) & 66 (138) \\
    \hline
    $\nu = 10^{-3}$ & 96 (122) & 80 (157) & 74 (159) & 108 (124) & 74 (152) & 59 (141) \\
    \hline
  \end{tabular}
\caption{Parameter-robustness for the lid-driven cavity test case of
  \cref{ss:tc2}. The table reports the number of iterations required
  by preconditioned MINRES (preconditioner $\widehat{P}^{-1}$ in both
  its exact and inexact forms) to achieve convergence for different
  values of $\nu$ and $\tau$. The results corresponding to the inexact
  form of $\widehat{P}^{-1}$ are shown in parentheses.}
  \label{tab:lid-paramet-robust}
\end{table}

\subsection{Example 3: Brinkman heterogeneous media case}
\label{ss:tc3}

In this section we consider the Brinkman model, i.e., we consider
$\tau$ in \cref{eq:tstokes-model-be} to be spatially varying. In two
dimensions we define $\Omega = (0, 1)^2$, $f = (1, 1)$, and
$u = (0, 0)$ on $\partial \Omega$. We use a fixed mesh consisting of
32768 simplices and we set $\eta = 4 k^2$. We consider the following
expression for $\tau$:
\begin{equation*}
  \tau(x_1, x_2) := 0.5 \cdot 10^{6} (1 + 10^{-6} 
  + \sin(8.3\pi x_1) \sin(6.2\pi x_2)).
\end{equation*}
In three dimensions we define $\Omega = (0, 1)^3$, $f = (1, 1, 1)$,
and $u = (0, 0, 0)$ on $\partial \Omega$. We use a fixed mesh
consisting of 3072 simplices and $\eta = 6 k^2$. We consider the 
following expression for $\tau$:
\begin{equation*}
  \tau(x_1, x_2, x_3) := 0.5 \cdot 10^{6} (1 + 10^{-6} 
  + \sin(8.3\pi x_1) \sin(6.2\pi x_2) \sin(5.1\pi x_3)).
\end{equation*}
In \Cref{tab:brinkman-paramet-robust} we list the number of iterations
required for preconditioned MINRES (with preconditioner
$\widehat{P}^{-1}$) to converge for different values of $\nu$. In two
dimensions we observe for both HDG and EDG-HDG discretizations that
the iteration count varies slightly with higher iteration count when
$\nu$ is small. In three dimensions we also observe small variations
in the iteration count, however, here we observe that iteration count
is slightly lower for small $\nu$. As in the previous sections, the
variation in iteration count is small given the large variation in
parameters.

\begin{table}[tbp]
  \centering
  \begin{tabular}{c|c|c|c||c|c|c}
    \hline
    & \multicolumn{3}{c||}{EDG-HDG} & \multicolumn{3}{c}{HDG} \\
    \hline
    & $\nu = 1$ & $\nu = 10^{-2}$ & $\nu = 10^{-3}$ & $\nu = 1$ & $\nu = 10^{-2}$ & $\nu = 10^{-3}$ \\
    \hline
    2d & 67 (87) & 95 (111) & 86 (101) & 73 (117) & 98 (193) & 91 (183) \\
    \hline
    3d & 57 (62) & 36 (36) & 33 (33) & 64 (165) & 50 (119) & 45 (108) \\
    \hline
  \end{tabular}
  \caption{Parameter-robustness for the Brinkman heterogeneous media
    test case of \cref{ss:tc3}. We list the number of iterations
    required for preconditioned MINRES (preconditioner
    $\widehat{P}^{-1}$ in both its exact and inexact form) to converge
    for different values of $\nu$. The results for the inexact form of
    $\widehat{P}^{-1}$ are shown in parentheses.}
  \label{tab:brinkman-paramet-robust}
\end{table}

\section{Conclusions}
\label{s:conclusions}

In this paper we presented parameter-robust preconditioners for the
reduced linear system arising from applying static condensation to an
HDG discretization of the time-dependent Stokes problem. In the
process we generalized the Schur complement approach that we presented
in our previous work \cite{henriquez2025parameter} and proved uniform
well-posedness of the non-condensed HDG discretization of the
time-dependent Stokes equations.

The Schur complement approach that we presented in our previous work
\cite{henriquez2025parameter} is difficult to use for this problem
because this would need the construction of the Schur complement
corresponding to the norms of function spaces that are intersections
or sums of Hilbert spaces. To overcome this difficulty we proposed new
face-norm conditions generalizing the condition introduced in
\cite{henriquez2025parameter}. As a consequence we devised new
theoretical tools for preconditioning of statically condensed systems
in which norms of function spaces that are intersections or sums of
Hilbert spaces are involved.

A key step to proving uniform well-posedness of the non-condensed HDG
discretization of the time-dependent Stokes equations is proving
uniform inf-sup stability of the velocity/pressure coupling term. We
presented a detailed proof of this result.

Finally, numerical test results for the time-dependent Stokes
equations and the Brinkman equations verify our theoretical results.

\section*{Acknowledgments}

SR acknowledges support from the Natural Sciences and Engineering
Research Council of Canada through the Discovery Grant program
(RGPIN-2023-03237).

\bibliographystyle{plain}
\bibliography{references}

@article{southworth2020fixed,
  title={On Fixed-Point, Krylov, and $2 \times 2$ Block Preconditioners for Nonsymmetric Problems},
  author={Southworth, Ben S and Sivas, Abdullah A and Rhebergen, Sander},
  journal={SIAM Journal on Matrix Analysis and Applications},
  volume={41},
  number={2},
  pages={871--900},
  year={2020},
  publisher={SIAM}
}

@phdthesis{sivas2021preconditioning,
  title={Preconditioning of hybridizable discontinuous Galerkin discretizations of the Navier-Stokes equations},
  author={Sivas, Abdullah Ali},
  year={2021},
  school={University of Waterloo}
}

@article{lindsay2025preconditioning,
  title={Preconditioning a hybridizable discontinuous {G}alerkin method for {N}avier-{S}tokes at high {R}eynolds number},
  author={Lindsay, Alexander D and Rhebergen, Sander and Southworth, Ben S},
  journal={arXiv preprint arXiv:2512.02971},
  year={2025}
}

@article{he2021local,
  title={Local {F}ourier analysis of multigrid for hybridized and embedded discontinuous {G}alerkin methods},
  author={He, Yunhui and Rhebergen, Sander and Sterck, Hans De},
  journal={SIAM Journal on Scientific Computing},
  volume={43},
  number={5},
  pages={S612--S636},
  year={2021},
  publisher={SIAM}
}

@article{rhebergen2018preconditioning,
  title={Preconditioning of a hybridized discontinuous {G}alerkin finite element method for the {S}tokes equations},
  author={Rhebergen, Sander and Wells, Garth N},
  journal={Journal of Scientific Computing},
  volume={77},
  number={3},
  pages={1936--1952},
  year={2018},
  publisher={Springer}
}

@article{rhebergen2018hybridizable,
  title={A hybridizable discontinuous {G}alerkin method for the {N}avier--{S}tokes equations with pointwise divergence-free velocity field},
  author={Rhebergen, Sander and Wells, Garth N},
  journal={Journal of Scientific Computing},
  volume={76},
  number={3},
  pages={1484--1501},
  year={2018},
  publisher={Springer}
}

@article{mardal2011preconditioning,
  title={Preconditioning discretizations of systems of partial differential equations},
  author={Mardal, Kent-Andre and Winther, Ragnar},
  journal={Numerical Linear Algebra with Applications},
  volume={18},
  number={1},
  pages={1--40},
  year={2011},
  publisher={Wiley Online Library}
}

@book{di2011mathematical,
  title={Mathematical aspects of discontinuous {G}alerkin methods},
  author={Di Pietro, Daniele Antonio and Ern, Alexandre},
  volume={69},
  year={2011},
  publisher={Springer Science \& Business Media}
}

@article{rhebergen2017analysis,
  title={Analysis of a hybridized/interface stabilized finite element method for the {S}tokes equations},
  author={Rhebergen, Sander and Wells, Garth N},
  journal={SIAM Journal on Numerical Analysis},
  volume={55},
  number={4},
  pages={1982--2003},
  year={2017},
  publisher={SIAM}
}

@article{rhebergen2020embedded,
  title={An embedded--hybridized discontinuous {G}alerkin finite element method for the {S}tokes equations},
  author={Rhebergen, Sander and Wells, Garth N},
  journal={Computer Methods in Applied Mechanics and Engineering},
  volume={358},
  pages={112619},
  year={2020},
  publisher={Elsevier}
}

@book{boffi2013mixed,
  title={Mixed finite element methods and applications},
  author={Boffi, Daniele and Brezzi, Franco and Fortin, Michel},
  volume={44},
  year={2013},
  publisher={Springer}
}

@article{kraus2021uniformly,
  title={Uniformly well-posed hybridized discontinuous {G}alerkin/hybrid mixed discretizations for {B}iot’s consolidation model},
  author={Kraus, Johannes and Lederer, Philip L and Lymbery, Maria and Sch{\"o}berl, Joachim},
  journal={Computer Methods in Applied Mechanics and Engineering},
  volume={384},
  pages={113991},
  year={2021},
  publisher={Elsevier}
}

@article{schoberl1997netgen,
  title={{NETGEN} An advancing front {2D/3D}-mesh generator based on abstract rules},
  author={Sch{\"o}berl, Joachim},
  journal={Computing and visualization in science},
  volume={1},
  number={1},
  pages={41--52},
  year={1997},
  publisher={Springer}
}

@article{schoberl2014c++,
  title={C++ 11 implementation of finite elements in {NGS}olve},
  author={Sch{\"o}berl, Joachim},
  journal={Institute for analysis and scientific computing, Vienna University of Technology},
  volume={30},
  year={2014},
  publisher={Citeseer}
}

@article{rhebergen2022preconditioning,
  title={Preconditioning for a pressure-robust {HDG} discretization of the {S}tokes equations},
  author={Rhebergen, Sander and Wells, Garth N},
  journal={SIAM Journal on Scientific Computing},
  volume={44},
  number={1},
  pages={A583--A604},
  year={2022},
  publisher={SIAM}
}

@book{du2019invitation,
  title={An {I}nvitation to the {T}heory of the {H}ybridizable {D}iscontinuous {G}alerkin {M}ethod: {P}rojections, {E}stimates, {T}ools},
  author={Du, Shukai and Sayas, Francisco-Javier},
  year={2019},
  publisher={Springer Nature}
}

@article{cockburn2009unified,
  title={Unified hybridization of discontinuous {G}alerkin, mixed, and continuous {G}alerkin methods for second order elliptic problems},
  author={Cockburn, Bernardo and Gopalakrishnan, Jayadeep and Lazarov, Raytcho},
  journal={SIAM Journal on Numerical Analysis},
  volume={47},
  number={2},
  pages={1319--1365},
  year={2009},
  publisher={SIAM}
}

@article{cockburn2014multigrid,
  title={Multigrid for an {HDG} method},
  author={Cockburn, Bernardo and Dubois, Olivier and Gopalakrishnan, Jay and Tan, Shuguang},
  journal={IMA Journal of Numerical Analysis},
  volume={34},
  number={4},
  pages={1386--1425},
  year={2014},
  publisher={OUP}
}

@article{fu2023uniform,
  title={Uniform block-diagonal preconditioners for divergence-conforming {HDG} Methods for the generalized {S}tokes equations and the linear elasticity equations},
  author={Fu, Guosheng and Kuang, Wenzheng},
  journal={IMA Journal of Numerical Analysis},
  volume={43},
  number={3},
  pages={1718--1741},
  year={2023},
  publisher={Oxford University Press}
}

@article{fu2021uniform,
  title={Uniform auxiliary space preconditioning for {HDG} methods for elliptic operators with a parameter dependent low order term},
  author={Fu, Guosheng},
  journal={SIAM Journal on Scientific Computing},
  volume={43},
  number={6},
  pages={A3912--A3937},
  year={2021},
  publisher={SIAM}
}

@article{henriquez2025parameter,
    author = {Henr\'{\i}quez, Esteban and Lee, Jeonghun J. and Rhebergen, Sander},
    title = {Parameter-Robust Preconditioning for Hybridizable Symmetric Discretizations},
    journal = {SIAM Journal on Scientific Computing},
    volume = {47},
    number = {6},
    pages = {A3212-A3238},
    year = {2025},
}

@incollection{schoberl2013domain,
  title={Domain decomposition preconditioning for high order hybrid discontinuous {G}alerkin methods on tetrahedral meshes},
  author={Sch{\"o}berl, Joachim and Lehrenfeld, Christoph},
  booktitle={Advanced finite element methods and applications},
  pages={27--56},
  year={2013},
  publisher={Springer}
}

@article{kraus2023hybridized,
  title={Hybridized discontinuous Galerkin/hybrid mixed methods for a multiple network poroelasticity model with application in biomechanics},
  author={Kraus, Johannes and Lederer, Philip L and Lymbery, Maria and Osthues, Kevin and Sch{\"o}berl, Joachim},
  journal={SIAM Journal on Scientific Computing},
  volume={45},
  number={6},
  pages={B802--B827},
  year={2023},
  publisher={SIAM}
}

@article{wells2011analysis,
  title={Analysis of an interface stabilized finite element method: the advection-diffusion-reaction equation},
  author={Wells, Garth N},
  journal={SIAM journal on numerical analysis},
  volume={49},
  number={1},
  pages={87--109},
  year={2011},
  publisher={SIAM}
}

@article{mardal2004uniform,
  title={Uniform preconditioners for the time dependent {S}tokes problem},
  author={Mardal, Kent-Andre and Winther, Ragnar},
  journal={Numerische Mathematik},
  volume={98},
  number={2},
  pages={305--327},
  year={2004},
  publisher={Springer}
}

@article{baerland2020observation,
  title={An observation on the uniform preconditioners for the mixed {D}arcy problem},
  author={B{\ae}rland, Trygve and Kuchta, Miroslav and Mardal, Kent-Andre and Thompson, Travis},
  journal={Numerical Methods for Partial Differential Equations},
  volume={36},
  number={6},
  pages={1718--1734},
  year={2020},
  publisher={Wiley Online Library}
}

@book{higham2002accuracy,
  title={Accuracy and stability of numerical algorithms},
  author={Higham, Nicholas J},
  year={2002},
  publisher={SIAM}
}

@article{cahouet1988some,
  title={Some fast {3D} finite element solvers for the generalized {S}tokes problem},
  author={Cahouet, J and Chabard, J-P},
  journal={International Journal for Numerical Methods in Fluids},
  volume={8},
  number={8},
  pages={869--895},
  year={1988},
  publisher={Wiley Online Library}
}

@article{bramble1997iterative,
  title={Iterative techniques for time dependent {S}tokes problems},
  author={Bramble, James H and Pasciak, Joseph E},
  journal={Computers \& Mathematics with Applications},
  volume={33},
  number={1-2},
  pages={13--30},
  year={1997},
  publisher={Elsevier}
}

@article{olshanskii2006uniform,
  title={Uniform preconditioners for a parameter dependent saddle point problem with application to generalized {S}tokes interface equations},
  author={Olshanskii, Maxim A and Peters, J{\"o}rg and Reusken, Arnold},
  journal={Numerische Mathematik},
  volume={105},
  number={1},
  pages={159--191},
  year={2006},
  publisher={Springer}
}

@article{lu2022homogeneous,
  title={Homogeneous multigrid for {HDG}},
  author={Lu, Peipei and Rupp, Andreas and Kanschat, Guido},
  journal={IMA Journal of Numerical Analysis},
  volume={42},
  number={4},
  pages={3135--3153},
  year={2022},
  publisher={Oxford University Press}
}

@article{lu2022analysis,
  title={Analysis of injection operators in geometric multigrid solvers for {HDG} methods},
  author={Lu, Peipei and Rupp, Andreas and Kanschat, Guido},
  journal={SIAM Journal on Numerical Analysis},
  volume={60},
  number={4},
  pages={2293--2317},
  year={2022},
  publisher={SIAM}
}

@article{lu2024homogeneous,
  title={Homogeneous multigrid for {HDG} applied to the {S}tokes equation},
  author={Lu, Peipei and Wang, Wei and Kanschat, Guido and Rupp, Andreas},
  journal={IMA Journal of Numerical Analysis},
  volume={44},
  number={5},
  pages={3124--3152},
  year={2024},
  publisher={Oxford University Press}
}

@article{tu2021bddc,
  title={{BDDC} algorithms for advection-diffusion problems with {HDG} discretizations},
  author={Tu, Xuemin and Zhang, Jinjin},
  journal={Computers \& Mathematics with Applications},
  volume={101},
  pages={74--106},
  year={2021},
  publisher={Elsevier}
}

@article{tu2020analysis,
  title={Analysis of {BDDC} algorithms for {S}tokes problems with hybridizable discontinuous {G}alerkin discretizations},
  author={Tu, Xuemin and Wang, Bin and Zhang, Jinjin},
  journal={Electron. Trans. Numer. Anal.},
  volume={52},
  pages={553--570},
  year={2020}
}

@article{zhang2022robust,
  title={Robust {BDDC} algorithms for the {B}rinkman problem with {HDG} discretizations},
  author={Zhang, Jinjin and Tu, Xuemin},
  journal={Computer Methods in Applied Mechanics and Engineering},
  volume={400},
  pages={115548},
  year={2022},
  publisher={Elsevier}
}

@article{lu2023two,
  title={Two-level {S}chwarz methods for hybridizable discontinuous {G}alerkin methods},
  author={Lu, Peipei and Rupp, Andreas and Kanschat, Guido},
  journal={Journal of Scientific Computing},
  volume={95},
  number={1},
  pages={9},
  year={2023},
  publisher={Springer}
}

@article{yu2024nonoverlapping,
  title={Nonoverlapping spectral additive {S}chwarz methods for hybrid discontinuous {G}alerkin discretizations},
  author={Yu, Yi and Dryja, Maksymilian and Sarkis, Marcus},
  journal={IMA Journal of Numerical Analysis},
  volume={44},
  number={1},
  pages={192--224},
  year={2024},
  publisher={Oxford University Press}
}

@article{henriquez2025preconditioning,
  title={Preconditioning of a hybridizable discontinuous {G}alerkin method for {B}iot's consolidation model},
  author={Henr{\'\i}quez, Esteban and Lee, Jeonghun J and Rhebergen, Sander},
  journal={arXiv preprint arXiv:2508.12991},
  year={2025}
}

@article{sivas2021air,
  title={{AIR} algebraic multigrid for a space-time hybridizable discontinuous {G}alerkin discretization of advection (-diffusion)},
  author={Sivas, Abdullah Ali and Southworth, Ben S and Rhebergen, Sander},
  journal={SIAM Journal on Scientific Computing},
  volume={43},
  number={5},
  pages={A3393--A3416},
  year={2021},
  publisher={SIAM}
}

@book {Galdi:NSE-book,
    AUTHOR = {Galdi, G. P.},
     TITLE = {An introduction to the mathematical theory of the
              {N}avier-{S}tokes equations},
    SERIES = {Springer Monographs in Mathematics},
   EDITION = {Second},
      NOTE = {Steady-state problems},
 PUBLISHER = {Springer, New York},
      YEAR = {2011},
     PAGES = {xiv+1018},
      ISBN = {978-0-387-09619-3},
   MRCLASS = {35Q30 (35-02 76D03 76D05 76D07)},
  MRNUMBER = {2808162},
}

@article {Arnold-Guzman:2021,
    AUTHOR = {Arnold, Douglas and Guzm\'an, Johnny},
     TITLE = {Local {$L^2$}-bounded commuting projections in {FEEC}},
   JOURNAL = {ESAIM Math. Model. Numer. Anal.},
  FJOURNAL = {ESAIM. Mathematical Modelling and Numerical Analysis},
    VOLUME = {55},
      YEAR = {2021},
    NUMBER = {5},
     PAGES = {2169--2184},
      ISSN = {2822-7840,2804-7214},
   MRCLASS = {65N30},
  MRNUMBER = {4323408},
MRREVIEWER = {Martin\ W.\ Licht},
}

@incollection {Geissert:2006,
    AUTHOR = {Gei\ss ert, Matthias and Heck, Horst and Hieber, Matthias},
     TITLE = {On the equation {${\rm div}\,u=g$} and {B}ogovski\u i's
              operator in {S}obolev spaces of negative order},
 BOOKTITLE = {Partial differential equations and functional analysis},
    SERIES = {Oper. Theory Adv. Appl.},
    VOLUME = {168},
     PAGES = {113--121},
 PUBLISHER = {Birkh\"auser, Basel},
      YEAR = {2006},
      ISBN = {978-3-7643-7600-4; 3-7643-7600-7},
   MRCLASS = {35F10 (35Q35)},
  MRNUMBER = {2240056},
}

@article {Ern-Gudi:2022,
    AUTHOR = {Ern, Alexandre and Gudi, Thirupathi and Smears, Iain and
              Vohral\'ik, Martin},
     TITLE = {Equivalence of local- and global-best approximations, a simple
              stable local commuting projector, and optimal {$hp$}
              approximation estimates in {$\bold H({\rm div})$}},
   JOURNAL = {IMA J. Numer. Anal.},
  FJOURNAL = {IMA Journal of Numerical Analysis},
    VOLUME = {42},
      YEAR = {2022},
    NUMBER = {2},
     PAGES = {1023--1049},
      ISSN = {0272-4979,1464-3642},
   MRCLASS = {65N30 (65N15)},
  MRNUMBER = {4410735},
}

@article {Gawlik:2021,
    AUTHOR = {Gawlik, Evan and Holst, Michael J. and Licht, Martin W.},
     TITLE = {Local finite element approximation of {S}obolev differential
              forms},
   JOURNAL = {ESAIM Math. Model. Numer. Anal.},
  FJOURNAL = {ESAIM. Mathematical Modelling and Numerical Analysis},
    VOLUME = {55},
      YEAR = {2021},
    NUMBER = {5},
     PAGES = {2075--2099},
      ISSN = {2822-7840,2804-7214},
   MRCLASS = {65N30},
  MRNUMBER = {4319601},
MRREVIEWER = {Marius\ Ghergu},
}
\appendix
\section{Useful results}
\label{sec:useful-ineq}

\begin{lemma}
  \label{lem:facet-norm-bound}
  There exists a positive uniform constant $\bar{c}$ such that
  \begin{equation}
    \label{eq:facet-norm-bound}
    \tnorm{\bar{v}_h}_{v,h} \leq \bar{c} \tnorm{\boldsymbol{v}_h}_{v,1}
    \quad \forall \boldsymbol{v}_h \in \boldsymbol{V}_h.
  \end{equation}
\end{lemma}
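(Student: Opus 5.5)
The plan is to prove the bound one cell at a time and then sum. Fix $\boldsymbol{v}_h=(v_h,\bar{v}_h)\in\boldsymbol{V}_h$ and a cell $K$. On each face $F\subset\partial K$ write
\[
\bar{v}_h - m_K(\bar{v}_h) = (\bar{v}_h - v_h) + (v_h - m_K(v_h)) + (m_K(v_h) - m_K(\bar{v}_h)),
\]
where $m_K(v_h)$ is the mean of the trace of $v_h$ over $\partial K$. We then bound $h_K^{-1/2}$ times the $L^2(\partial K)$ norm of each of the three pieces.

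The first piece gives directly $h_K^{-1/2}\norm[0]{\bar{v}_h-v_h}_{\partial K}$. Summed over all cells, this is controlled by $\eta^{-1/2}\tnorm{\boldsymbol{v}_h}_{v,1}$, and $\eta^{-1/2}\le 1$ since $\eta>1$.

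For the third piece, $m_K(v_h)-m_K(\bar{v}_h)$ is a constant on $\partial K$. By the Cauchy--Schwarz inequality its modulus is at most $|\partial K|^{-1/2}\norm[0]{v_h-\bar{v}_h}_{\partial K}$. Its $L^2(\partial K)$ norm is therefore bounded by $\norm[0]{v_h-\bar{v}_h}_{\partial K}$, and this piece is handled exactly like the first.

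The second piece is the only one that needs a genuine estimate. We must show
\[
h_K^{-1/2}\norm[0]{v_h - m_K(v_h)}_{\partial K}\lesssim \norm[0]{\nabla v_h}_K .
\]
Since $m_K(v_h)$ is the $L^2(\partial K)$-best constant approximation of $v_h$, we may replace it by the cell mean $\Pi_0 v_h$. The discrete trace inequality (or a continuous trace inequality with $h_K$ scaling) then gives
\[
h_K^{-1/2}\norm[0]{v_h-\Pi_0 v_h}_{\partial K}\lesssim h_K^{-1}\norm[0]{v_h-\Pi_0 v_h}_K+\norm[0]{\nabla v_h}_K .
\]
A Poincaré inequality on $K$, with constant proportional to $h_K$ by shape regularity, bounds the right-hand side by $\norm[0]{\nabla v_h}_K$. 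Alternatively, one can argue by scaling to a reference element: the seminorm $\hat v\mapsto\norm[0]{\hat v-m(\hat v)}_{\partial\hat K}$ vanishes on constants and is dominated by $\norm[0]{\hat\nabla\hat v}_{\hat K}$ on the finite-dimensional space $\mathbb{P}_k$.

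Squaring the three bounds, summing over $K\in\mathcal{T}_h$, and using quasi-uniformity to get uniform constants yields $\tnorm{\bar{v}_h}_{v,h}^2\lesssim\norm[0]{\nabla v_h}_{\mathcal{T}_h}^2+\eta\norm[0]{h_K^{-1/2}(v_h-\bar{v}_h)}_{\partial\mathcal{T}_h}^2=\tnorm{\boldsymbol{v}_h}_{v,1}^2$. The only point requiring care is that the constant in the cell-wise Poincaré/trace step is independent of $h_K$ and of $K$, which follows from shape regularity. This step is the mild obstacle; the rest is bookkeeping.
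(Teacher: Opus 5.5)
Your proof is correct and follows the standard argument the paper defers to (it only cites the proof of Lemma 5 in Rhebergen--Wells). You split $\bar{v}_h - m_K(\bar{v}_h)$ through $v_h$, and absorb the $\bar{v}_h - v_h$ pieces into the penalty part of $\tnorm{\boldsymbol{v}_h}_{v,1}$, which includes the constant $m_K(v_h-\bar{v}_h)$ via $L^2(\partial K)$-stability of $m_K$ and uses $\eta>1$. You then bound $h_K^{-1/2}\|v_h - m_K(v_h)\|_{\partial K}\lesssim\|\nabla v_h\|_K$ by best approximation, a scaled trace inequality and Poincaré.

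Two minor remarks: only shape regularity is needed, and you could drop your third piece by applying best approximation once at the start, i.e.\ $\|\bar{v}_h - m_K(\bar{v}_h)\|_{\partial K}\le\|\bar{v}_h - \Pi_0 v_h\|_{\partial K}$.
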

\begin{proof}
  The proof follows the same steps as the proof of \cite[Lemma
  5]{rhebergen2018preconditioning} and is therefore omitted.
\end{proof}

\begin{theorem}
  \label{lem:schur-comp-equiv}
  Let $A$ and $P$ be operators with block structures as defined in
  \cref{ss:preconframeworkscold}. Assume there exist uniform constants
  $c_1, c_2 > 0$ such that
  \begin{equation}
    \label{eq:schur-comp-equiv-1}
    c_1 \norm[0]{\boldsymbol{x}_h}_{\boldsymbol{X}_h}^2
    \leq a_h(\boldsymbol{x}_h , \boldsymbol{x}_h)
    \leq c_2 \norm[0]{\boldsymbol{x}_h}_{\boldsymbol{X}_h}^2
    \quad \forall \boldsymbol{x}_h \in \boldsymbol{X}_h.
  \end{equation}
  Then
  \begin{equation}
    \label{eq:schur-comp-equiv-2}
    c_1 \norm[0]{\bar{x}_h}_{\bar{X}_h}^2
    \leq \langle S_A \bar{x}_h, \bar{x}_h \rangle_{\bar{X}_h^*, \bar{X}_h}
    \leq c_2 \norm[0]{\bar{x}_h}_{\bar{X}_h}^2
    \quad \forall \bar{x}_h \in \bar{X}_h,
  \end{equation}
  where
  $\norm[0]{\bar{x}_h}^2 := \langle S_P \bar{x}_h,
  \bar{x}_h\rangle_{\bar{X}_h^*, \bar{X}_h}$ and
  $S_P := P_{22} - P_{21} P_{11}^{-1} P_{21}^T$.
\end{theorem}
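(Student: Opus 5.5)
The plan is to characterize both $\langle S_A\bar{x}_h,\bar{x}_h\rangle$ and $\norm[0]{\bar{x}_h}_{\bar{X}_h}^2$ as minima over the cell variable. Then the two-sided bound \cref{eq:schur-comp-equiv-1} passes directly to the minimizers, giving \cref{eq:schur-comp-equiv-2}. We assume, as the statement implicitly requires, that $A$ and $P$ are symmetric and that $A_{11}$ and $P_{11}$ are symmetric positive definite. The first inequality in \cref{eq:schur-comp-equiv-1} makes $A$ positive definite, which gives this for $A_{11}$. We read $\norm[0]{\boldsymbol{x}_h}_{\boldsymbol{X}_h}^2=\langle P\boldsymbol{x}_h,\boldsymbol{x}_h\rangle$, so $P$ is positive definite as well.

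\textbf{Step 1 (energy identity).} For fixed $\bar{x}_h$ and any $x_h\in X_h$, completing the square in the block form \cref{eq:hybridsystem} gives, exactly as in the computation \cref{eq:cond1-aux1},
\begin{equation*}
  a_h((x_h,\bar{x}_h),(x_h,\bar{x}_h))
  = \langle A_{11}(x_h + A_{11}^{-1}A_{21}^T\bar{x}_h), x_h + A_{11}^{-1}A_{21}^T\bar{x}_h\rangle_{X_h^*,X_h}
  + \langle S_A\bar{x}_h,\bar{x}_h\rangle_{\bar{X}_h^*,\bar{X}_h}.
\end{equation*}
Since $A_{11}$ is positive, this yields
\begin{equation*}
  \langle S_A\bar{x}_h,\bar{x}_h\rangle = \min_{x_h\in X_h} a_h((x_h,\bar{x}_h),(x_h,\bar{x}_h)),
\end{equation*}
with the minimum attained at $x_h^A:=-A_{11}^{-1}A_{21}^T\bar{x}_h$. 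The same computation applied to $P$ gives
\begin{equation*}
  \norm[0]{\bar{x}_h}_{\bar{X}_h}^2 = \min_{x_h}\norm[0]{(x_h,\bar{x}_h)}_{\boldsymbol{X}_h}^2,
\end{equation*}
attained at $x_h^P:=-P_{11}^{-1}P_{21}^T\bar{x}_h$.

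\textbf{Step 2 (upper bound).} Evaluating the $A$-minimization at the competitor $x_h^P$ and applying the right inequality of \cref{eq:schur-comp-equiv-1} gives
\begin{equation*}
  \langle S_A\bar{x}_h,\bar{x}_h\rangle \le a_h((x_h^P,\bar{x}_h),(x_h^P,\bar{x}_h)) \le c_2\norm[0]{(x_h^P,\bar{x}_h)}_{\boldsymbol{X}_h}^2 = c_2\norm[0]{\bar{x}_h}_{\bar{X}_h}^2.
\end{equation*}

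\textbf{Step 3 (lower bound).} Symmetrically, using the left inequality of \cref{eq:schur-comp-equiv-1} and then the minimality of $x_h^P$,
\begin{equation*}
  \langle S_A\bar{x}_h,\bar{x}_h\rangle = a_h((x_h^A,\bar{x}_h),(x_h^A,\bar{x}_h)) \ge c_1\norm[0]{(x_h^A,\bar{x}_h)}_{\boldsymbol{X}_h}^2 \ge c_1\norm[0]{\bar{x}_h}_{\bar{X}_h}^2.
\end{equation*}

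There is no real obstacle here. The only point needing care is the positivity of $A_{11}$ and $P_{11}$, which justifies the minimum characterization in Step 1; it follows from restricting \cref{eq:schur-comp-equiv-1} to vectors of the form $(x_h,0)$.
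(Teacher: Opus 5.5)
Your proof is correct and is essentially the paper's argument. The paper also completes the square for both $A$ and $P$. It substitutes $x_h=-A_{11}^{-1}A_{21}^T\bar{x}_h$ for the lower bound, dropping the nonnegative $P_{11}$-term, and $x_h=-P_{11}^{-1}P_{21}^T\bar{x}_h$ for the upper bound, dropping the nonnegative $A_{11}$-term. Your minimization phrasing is a cleaner packaging of those same two substitutions. You also derive the positivity of $A_{11}$ and $P_{11}$ from the assumed two-sided bound, which the paper only asserts.
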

\begin{proof}
  We begin by proving the lower bound in
  \cref{eq:schur-comp-equiv-2}. Writing out
  \cref{eq:schur-comp-equiv-1} in operator notation we note that
  \begin{align*}
    \langle A \boldsymbol{x}_h, \boldsymbol{x}_h \rangle_{\boldsymbol{X}_h^*, \boldsymbol{X}_h}
    & = \langle A_{11} (x_h + A_{11}^{-1} A_{21}^T \bar{x}_h), 
      x_h + A_{11}^{-1} A_{21}^T \bar{x}_h \rangle _{X_h^*, X_h}
      + \langle S_A \bar{x}_h, \bar{x}_h \rangle_{\bar{X}_h^*, \bar{X}_h}
    \\
    &\geq c_1 \big( \langle P_{11} (x_h + P_{11}^{-1} P_{21}^T \bar{x}_h), 
      x_h + P_{11}^{-1} P_{21}^T \bar{x}_h \rangle _{X_h^*, X_h}
      + \langle S_P \bar{x}_h, \bar{x}_h \rangle_{\bar{X}_h^*, \bar{X}_h}\big).
  \end{align*}
  Choosing $x_h = - A_{11}^{-1} A_{21}^T \bar{x}_h$, we find
  \begin{align*}
    \langle S_A \bar{x}_h, \bar{x}_h\rangle_{\bar{X}_h^*, \bar{X}_h}
    \geq& c_1 \big( \langle P_{11}( - A_{11}^{-1} A_{21} \bar{x}_h 
      + P_{11} P_{21}^T \bar{x}_h), 
      - A_{11}^{-1} A_{21} \bar{x}_h + P_{11} P_{21}^T \bar{x}_h\rangle_{X_h^*, \bar{X}_h}
    \\
    &
      + \langle S_P \bar{x}_h, \bar{x}_h\rangle_{\bar{X}_h^*, \bar{X}_h}\big)
    \\
    \geq& c_1 \langle S_P \bar{x}_h, \bar{x}_h\rangle_{\bar{X}_h^*, \bar{X}_h} 
      = c_1 \norm[0]{\bar{x}_h}_{\bar{X}_h}^2,
  \end{align*}
  where the last inequality holds because $P_{11}$ is a positive
  operator.

  We now prove the upper bound in
  \cref{eq:schur-comp-equiv-2}. Writing out
  \cref{eq:schur-comp-equiv-1} in operator notation we note that
  \begin{align*}
    c_2 \norm[0]{\boldsymbol{x}_h}_{\boldsymbol{X}_h}^2
    & = c_2 \big( \langle P_{11} (x_h + P_{11}^{-1} P_{21}^T \bar{x}_h),  
      x_h + P_{11}^{-1} P_{21}^T \bar{x}_h \rangle_{X_h^*, X_h}
      + \langle S_P \bar{x}_h, \bar{x}_h\rangle_{\bar{X}_h^*, \bar{X}_h}\big)
    \\
    & \geq \langle A_{11} (x_h + A_{11}^{-1} A_{21}^T \bar{x}_h),  
      x_h + A_{11}^{-1} A_{21}^T \bar{x}_h \rangle_{X_h^*, X_h}
      + \langle S_A \bar{x}_h, \bar{x}_h\rangle_{\bar{X}_h^*, \bar{X}_h}.
  \end{align*}
  Choosing $x_h = - P_{11}^{-1} P_{21}^T \bar{x}_h$, we find
  \begin{align*}
    c_2 \norm[0]{\bar{x}_h}_{\bar{X}_h}^2
    = & \langle S_P \bar{x}_h, \bar{x}_h\rangle_{\bar{X}_h^*, \bar{X}_h}
    \\
    \geq & \langle A_{11} (- P_{11}^{-1} P_{21}^T \bar{x}_h
           + A_{11}^{-1} A_{21}^T \bar{x}_h), - P_{11}^{-1} P_{21}^T \bar{x}_h
           + A_{11}^{-1} A_{21}^T \bar{x}_h \rangle_{X_h^*, X_h}
    \\
      & + \langle S_A \bar{x}_h, \bar{x}_h \rangle_{\bar{X}_h^*, \bar{X}_h}
    \\
    \geq & \langle S_A \bar{x}_h, \bar{x}_h \rangle_{\bar{X}_h^*, \bar{X}_h},
  \end{align*}
  where the last inequality holds because $A_{11}$ is a positive
  operator.
\end{proof}

\section{Auxiliary problems}

\subsection{Auxiliary problem for the pressure field}
\label{ap:auxprobpressure}

Consider the following diffusion problem for the pressure:
\begin{equation*}
  - \nabla \cdot (\tau^{-1} \nabla p) = g \text{ in }\Omega,
  \quad
  \nabla p \cdot n = 0 \text{ on } \partial \Omega, 
  \quad
  \int_{\Omega} p \dif x = 0,
\end{equation*}
for some source term $g$. The HDG discretization of this problem is
given by (see \cite{wells2011analysis}): Given $g \in L^2(\Omega)$,
find $\boldsymbol{p}_h \in \boldsymbol{Q}_h$ such that
\begin{equation}
  \label{eq:auxiliary-problem-p}
  \tilde{a}_h(\boldsymbol{p}_h, \boldsymbol{q}_h) 
  = (g, q_h)_{\mathcal{T}_h}
  \quad \forall \boldsymbol{q}_h \in \boldsymbol{Q}_h,
\end{equation}
where
\begin{equation}
  \label{eq:bili-form-tildea}
  \begin{split}
    \tilde{a}_h(\boldsymbol{p}_h, \boldsymbol{q}_h)
    :=& \tau^{-1} (\nabla p_h, \nabla q_h)_{\mathcal{T}_h}
        + \tau^{-1} \eta \langle h_K^{-1} (p_h - \bar{p}_h),
        q_h - \bar{q}_h \rangle_{\partial\mathcal{T}_h}
    \\
      & - \tau^{-1} \langle \nabla p_h \cdot n, q_h - \bar{q}_h\rangle_{\partial \mathcal{T}_h}
        - \tau^{-1} \langle \nabla q_h \cdot n, p_h - \bar{p}_h\rangle_{\partial \mathcal{T}_h}.    
  \end{split}
\end{equation}
By \cite[Lemmas 5.2 and 5.3]{wells2011analysis} this bilinear form is
such that there exist constants $\tilde{c}_1, \tilde{c}_2 > 0$ such
that
\begin{equation}
  \label{eq:bili-form-tildea-equivalence}
  \tilde{c}_1 \tau^{-1} \tnorm{\boldsymbol{q}_h}_{q,1}^2
  \leq \tilde{a}_h(\boldsymbol{q}_h, \boldsymbol{q}_h)
  \leq \tilde{c}_2 \tau^{-1} \tnorm{\boldsymbol{q}_h}_{q,1}^2 
  \quad \forall \boldsymbol{q}_h \in \boldsymbol{Q}_h.
\end{equation}

Local solvers associated with \cref{eq:auxiliary-problem-p} are similar
to those of \cref{def:localsolver-a0}: Given $\bar{t}_h \in \bar{Q}_h$
and $s \in L^2(\Omega)$, the function
$\tilde{p}_h^L(\bar{t}_h, s) \in Q_h$ satisfies the following problem
restricted to a cell $K$:
\begin{equation*}
  \tilde{a}_h^K(\tilde{p}_h^L, q_h) = \tilde{g}_h^K(q_h)
  \quad \forall q_h \in Q(K),
\end{equation*}
where
\begin{align*}
  \tilde{a}_h^K(p_h, q_h)
  :=& \tau^{-1} (\nabla p_h, \nabla q_h)_K
      + \tau^{-1} \eta h_K^{-1} \langle p_h, q_h \rangle_{\partial K}
  \\
    & - \tau^{-1} \langle \nabla p_h \cdot n, q_h \rangle_{\partial K}
      - \tau^{-1} \langle \nabla q_h \cdot n, p_h \rangle_{\partial K},
  \\
  \tilde{g}_h^K(q_h)
  :=& (s, q_h)_K
      + \tau^{-1} \eta h_K^{-1} \langle q_h, \bar{t}_h \rangle_{\partial K}
      - \tau^{-1} \langle \nabla q_h \cdot n, \bar{t}_h \rangle_{\partial K}.
\end{align*}
Furthermore, similar to \cref{lem:condensed-formulation}, we have the
following reduced formulation of \cref{eq:auxiliary-problem-p} from
which $p_h$ has been eliminated from the system.

\begin{lemma}[Reduced auxiliary pressure problem]
  \label{lem:redauxproblem}
  Given $g \in L^2(\Omega)$, define $\tilde{p}_h^g :=
  \tilde{p}_h(0,g)$. Furthermore, define
  $\tilde{l}_p(\bar{q}_h) := \tilde{p}_h(\bar{q}_h, 0)$ for all
  $\bar{q}_h \in \bar{Q}_h$. Let $\bar{p}_h \in \bar{Q}_h$ be the
  solution to
  \begin{equation}
    \label{eq:condensed-auxiliary-problem-p}
    \tilde{a}_h((\tilde{l}_p(\bar{p}_h), \bar{p}_h), (\tilde{l}_p(\bar{q}_h), \bar{q}_h))
    = (g, \tilde{l}_p(\bar{q}_h))_{\mathcal{T}_h}
    \quad \forall \bar{q}_h \in \bar{Q}_h.
  \end{equation}
  Then $(p_h, \bar{p}_h)$, in which
  $p_h = \tilde{p}_h^g + \tilde{l}_p(\bar{p}_h)$, solves
  \cref{eq:auxiliary-problem-p}.
\end{lemma}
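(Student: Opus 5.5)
This is the standard static-condensation argument, identical in structure to \cite[Lemma 4]{rhebergen2018preconditioning} and to \cref{lem:condensed-formulation}. The plan is to show that the pair $(p_h,\bar p_h)$ with $p_h = \tilde{p}_h^g + \tilde{l}_p(\bar{p}_h)$ satisfies \cref{eq:auxiliary-problem-p} for every test function $\boldsymbol{q}_h = (q_h,\bar q_h)$. We split each test function as $\boldsymbol{q}_h = (q_h - \tilde{l}_p(\bar q_h), 0) + (\tilde{l}_p(\bar q_h), \bar q_h)$ and verify the equation separately on the two pieces.

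\textbf{Step 1 (local equations).} Fix a cell $K$. By linearity of the local solver in $(\bar t_h, s)$, we have $p_h|_K = \tilde{p}_h^L(\bar p_h, g)$. The local problem $\tilde a_h^K(p_h, r_h) = \tilde g_h^K(r_h)$ is, after moving the $\bar p_h$-terms to the left, exactly $\tilde a_h((p_h,\bar p_h),(r_h,0))|_K = (g,r_h)_K$. This is because when the test face component is zero, the face terms of $\tilde a_h$ reduce to those appearing in $\tilde a_h^K$ and $\tilde g_h^K$. Summing over $K$ shows that \cref{eq:auxiliary-problem-p} holds for all test functions of the form $(r_h, 0)$ with $r_h\in Q_h$. (If $Q_h$ carries the mean-zero constraint, this is handled exactly as in \cref{lem:condensed-formulation}; I would simply follow that convention.)

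\textbf{Step 2 (face equations).} Test with $(\tilde{l}_p(\bar q_h), \bar q_h)$ and write $p_h = \tilde p_h^g + \tilde l_p(\bar p_h)$. This gives
\begin{equation*}
  \tilde a_h((p_h,\bar p_h),(\tilde l_p(\bar q_h),\bar q_h)) = \tilde a_h((\tilde l_p(\bar p_h),\bar p_h),(\tilde l_p(\bar q_h),\bar q_h)) + \tilde a_h((\tilde p_h^g,0),(\tilde l_p(\bar q_h),\bar q_h)).
\end{equation*}
The first term equals $(g,\tilde l_p(\bar q_h))_{\mathcal{T}_h}$ by \cref{eq:condensed-auxiliary-problem-p}. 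The key point is that the second term vanishes. To see this, use symmetry of $\tilde a_h$ and rewrite it as $\tilde a_h((\tilde l_p(\bar q_h),\bar q_h),(\tilde p_h^g,0))$. This expression is the local equation defining $\tilde l_p(\bar q_h)=\tilde p_h^L(\bar q_h,0)$, tested with $\tilde p_h^g$, and it has zero source, so it equals $0$.

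\textbf{Step 3 (conclusion).} Subtracting the Step 1 identity with $r_h = \tilde l_p(\bar q_h)$ from the Step 1 identity with $r_h = q_h$, and adding the Step 2 identity, gives \cref{eq:auxiliary-problem-p} for the arbitrary test function $\boldsymbol{q}_h$.

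The only delicate point is the bookkeeping in Step 1: checking that the terms in $\tilde g_h^K$ involving $\bar t_h$ are precisely the face contributions of $\tilde a_h$ when the test face function is zero, with the correct signs. Everything else is linearity and symmetry.
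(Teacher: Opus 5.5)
Your proposal is correct and is the standard static-condensation argument the paper relies on. The paper gives no separate proof for this lemma: it defers to \cref{lem:condensed-formulation} and hence to \cite[Lemma 4]{rhebergen2018preconditioning}. Your decomposition of the test function, together with the symmetry argument that kills the cross term $\tilde a_h((\tilde p_h^g,0),(\tilde l_p(\bar q_h),\bar q_h))$, is exactly that argument.

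One small precision: your Step 1 identity actually holds for all broken $r_h \in \prod_K \mathbb{P}_{k-1}(K)$, not only $r_h \in Q_h$. This is what Step 3 needs, since $\tilde l_p(\bar q_h)$ need not have zero mean; the paper also glosses over this mean-zero/compatibility bookkeeping.
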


\subsection{Auxiliary problem for the velocity field}
\label{ap:auxprobvelocity}

Consider the following vector reaction-diffusion problem:
\begin{equation*}
  - \nabla \cdot (\nu \nabla u) + \tau u = f \text{ in }\Omega,
  \quad
  u = 0 \text{ on } \partial \Omega.
\end{equation*}
Its HDG discretization is given by: Given $f \in [L^2(\Omega)]^d$
find $\boldsymbol{u}_h \in \boldsymbol{V}_h$ such that
\begin{equation}
  \label{eq:auxiliary-problem-u}
  \tilde{d}_h(\boldsymbol{u}_h, \boldsymbol{v}_h):=
  d_h(\boldsymbol{u}_h, \boldsymbol{v}_h)
  + \tau (u_h, v_h)_{\mathcal{T}_h}
  = (f, v_h)_{\mathcal{T}_h}
  \quad \forall \boldsymbol{v}_h \in \boldsymbol{V}_h,
\end{equation}
with $d_h(\cdot, \cdot)$ defined in \cref{eq:bilinear-forms-a}. Note
that by \cref{eq:coercivity,eq:boundedness-aux1} we have that
\begin{equation}
  \label{eq:spec-equiv-tilde-d}
  c_c \tnorm{\boldsymbol{v}_h}_v^2 
  \leq \tilde{d}_h(\boldsymbol{v}_h, \boldsymbol{v}_h)
  \leq c_d \tnorm{\boldsymbol{v}_h}_v^2 
  \quad \forall \boldsymbol{v}_h \in \boldsymbol{V}_h.
\end{equation}
The local solvers associated with the discretization
\cref{eq:auxiliary-problem-u} are defined as follows: Given
$\bar{m}_h \in \bar{V}_h$ and $s \in [L^2(\Omega)]^d$, we define the
function $\tilde{u}_h^L(\bar{m}_h, s) \in V_h$ such that when
restricted to a cell $K$ it satisfies
\begin{equation}
  \label{eq:local-solver-reac-diff-u}
  \tilde{d}_h^K(\tilde{u}_h^L, v_h)
  = \tilde{f}_h^K(v_h)
  \quad \forall v_h \in V(K),
\end{equation}
where $V(K) := [\mathbb{P}_k(K)]^d$ and
\begin{align*}
  \tilde{d}_h^K(u_h, v_h)
  :=& \tau (u_h, v_h)_K 
      + \nu (\nabla u_h, \nabla v_h)_K
      + \nu \eta h_K^{-1} \langle u_h, v_h \rangle_{\partial K}
  \\
    & - \nu \langle \nabla u_h n, v_h \rangle_{\partial K}
      - \nu \langle \nabla v_h n, u_h \rangle_{\partial K},
  \\
  \tilde{f}_h^K(q_h)
  :=& (s, v_h)_K
      - \nu \langle \nabla v_h n, \bar{m}_h \rangle_{\partial K}
      + \nu \eta h_K^{-1} \langle \bar{m}_h, v_h \rangle_{\partial K}.
\end{align*}
Similar to \cref{lem:condensed-formulation}, we have the following
reduced formulation of \cref{eq:auxiliary-problem-u} from which $u_h$
has been eliminated from the system.
\begin{lemma}[Reduced auxiliary velocity problem]
  \label{lem:reduced-reac-dif-u}
  Given $f \in [L^2(\Omega)]^d$, define
  $\tilde{u}_h^f := \tilde{u}_h^L(0, f)$ and
  $\tilde{l}_u(\bar{v}_h) := \tilde{u}_h^L(\bar{v}_h, 0)$ for all
  $\bar{v}_h \in \bar{V}_h$.  Let $\bar{u}_h$ be the solution to
  \begin{equation}
    \label{eq:reduced-reac-dif-u}
    \tilde{d}_h((\tilde{l}_u(\bar{u}_h), \bar{u}_h), (\tilde{l}_u(\bar{v}_h), \bar{v}_h))
    = (f, \tilde{l}_u(\bar{v}_h))_{\mathcal{T}_h}
    \quad \forall \bar{v}_h \in \bar{V}_h.
  \end{equation}
  Then $(u_h, \bar{u}_h)$, in which
  $u_h = \tilde{u}_h^f + \tilde{l}_u(\bar{u}_h)$, solves
  \cref{eq:auxiliary-problem-u}.
\end{lemma}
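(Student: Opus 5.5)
The statement to prove is \cref{lem:reduced-reac-dif-u}. The plan is to show that static condensation of the local problems \cref{eq:local-solver-reac-diff-u} reproduces \cref{eq:auxiliary-problem-u}, following the argument for \cref{lem:condensed-formulation}. Throughout, set $u_h := \tilde{u}_h^f + \tilde{l}_u(\bar{u}_h)$.

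The first step is to note that the local problem on each $K$ is uniquely solvable. The form $\tilde{d}_h^K(v_h,v_h)$ is exactly $\tilde{d}_h((v_h,0),(v_h,0))$ restricted to $K$. By the coercivity \cref{eq:spec-equiv-tilde-d} applied to functions supported on $K$ with zero face component, this quantity is bounded below by $c_c\tnorm{(v_h,0)}_v^2 > 0$ for $v_h \neq 0$. Hence $\tilde{d}_h^K$ is coercive on $V(K)$, and $\tilde{u}_h^L(\bar{m}_h,s)$ is well defined. It is linear in $(\bar{m}_h,s)$ because the right-hand side $\tilde{f}_h^K$ is linear in these data, so $\tilde{u}_h^L(\bar{u}_h, f) = \tilde{u}_h^f + \tilde{l}_u(\bar{u}_h)$.

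The second step is the key identity. Summing the local equations over all $K$ and moving the $\bar{m}_h$ terms to the left shows that $(\tilde{u}_h^L(\bar m_h,s),\bar m_h)$ satisfies
\[
\tilde{d}_h((\tilde{u}_h^L(\bar m_h,s),\bar m_h),(v_h,0)) = (s,v_h)_{\mathcal{T}_h} \qquad \forall v_h\in V_h.
\]
The rearrangement is a direct check against \cref{eq:bilinear-forms-a}:
\begin{itemize}
\item the penalty term gives $\nu\eta h_K^{-1}\langle u-\bar m, v\rangle$;
\item the term $-\nu\langle \nabla v_h n, u - \bar m\rangle$ appears with the correct sign;
\item the term $-\nu\langle \nabla u n, v\rangle$ matches because $\bar{v}_h=0$.
\end{itemize}
Applying this with $\bar m_h=\bar u_h$ and $s=f$ shows that $(u_h,\bar u_h)$ satisfies \cref{eq:auxiliary-problem-u} for every test function of the form $(v_h,0)$. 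Applying it with $\bar m_h=\bar v_h$ and $s=0$ gives $\tilde{d}_h((\tilde{l}_u(\bar v_h),\bar v_h),(w_h,0))=0$ for all $w_h\in V_h$.

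The third step handles test functions $(v_h,\bar v_h)$ with nonzero face component, and this is the main step. I would decompose the test function as
\[
(v_h,\bar v_h) = (\tilde{l}_u(\bar v_h),\bar v_h) + (v_h-\tilde{l}_u(\bar v_h),0).
\]
The second piece is handled by step two, so it suffices to check $\tilde{d}_h((u_h,\bar u_h),(\tilde{l}_u(\bar v_h),\bar v_h)) = (f,\tilde{l}_u(\bar v_h))_{\mathcal{T}_h}$. Split $(u_h,\bar u_h) = (\tilde{l}_u(\bar u_h),\bar u_h) + (\tilde{u}_h^f,0)$ and use symmetry of $\tilde{d}_h$. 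The first part contributes the left side of \cref{eq:reduced-reac-dif-u}, which equals $(f,\tilde{l}_u(\bar v_h))_{\mathcal{T}_h}$. The cross term $\tilde{d}_h((\tilde{u}_h^f,0),(\tilde{l}_u(\bar v_h),\bar v_h))$ equals $\tilde{d}_h((\tilde{l}_u(\bar v_h),\bar v_h),(\tilde{u}_h^f,0))$, which vanishes by step two.

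The only care needed is the sign bookkeeping in step two; no analytic obstacle is expected.
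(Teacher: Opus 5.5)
Your proof is correct, including the local coercivity check, the sign bookkeeping that turns the local equations into $\tilde{d}_h((\tilde{u}_h^L(\bar m_h,s),\bar m_h),(v_h,0))=(s,v_h)_{\mathcal{T}_h}$, and the use of symmetry to kill the cross term. It is the standard static-condensation argument that the paper invokes but does not write out: it refers to \cref{lem:condensed-formulation}, whose proof is in turn deferred to \cite[Lemma 4]{rhebergen2018preconditioning}.
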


\end{document}